\DeclareMathOperator{\image}{Im}
\renewcommand{\Im}{\image}
\DeclareMathOperator{\Ker}{Ker}
\DeclareMathOperator{\Coker}{Coker}
\DeclareMathOperator{\coker}{coker}
\DeclareMathOperator{\Hom}{Hom}
\DeclareMathOperator{\End}{End}
\DeclareMathOperator{\Tor}{Tor}
\DeclareMathOperator{\Ext}{Ext}
\DeclareMathOperator{\pd}{proj.dim}
\DeclareMathOperator{\id}{inj.dim}
\DeclareMathOperator{\fld}{flat.dim}
\DeclareMathOperator{\gd}{gl.dim}
\DeclareMathOperator{\gldim}{gl.dim}
\DeclareMathOperator{\ddim}{dom.dim}
\DeclareMathOperator{\domdim}{dom.dim}
\DeclareMathOperator{\md}{mod}
\renewcommand{\mod}{\md}
\DeclareMathOperator{\Mod}{Mod}
\DeclareMathOperator{\proj}{proj}
\DeclareMathOperator{\fl}{fl}
\DeclareMathOperator{\refl}{ref}
\DeclareMathOperator{\add}{add}
\DeclareMathOperator{\Tr}{Tr}
\DeclareMathOperator{\Ass}{Ass}
\DeclareMathOperator{\Spec}{Spec}
\DeclareMathOperator{\height}{ht}
\DeclareMathOperator{\depth}{depth}
\DeclareMathOperator{\grade}{grade}
\DeclareMathOperator{\sgrade}{s.\!grade}
\DeclareMathOperator{\GL}{GL}
\newcommand\recollement[3]{\xymatrix{{#1}\ar[r]&{#2}\ar[r]\ar@/_7pt/[l]\ar@/^7pt/[l]&{#3}\ar@/_7pt/[l]\ar@/^7pt/[l] }}
\newcommand\recollementwithmaps[9]{\xymatrix{{#1}\ar[r]|-{#8}&{#2}\ar[r]|-{#5}\ar@/_7pt/[l]_-{#7}\ar@/^7pt/[l]^-{#9}&{#3}\ar@/_7pt/[l]_-{#4}\ar@/^7pt/[l]^-{#6} }}
\def\A{\mathcal{A}}
\def\C{\mathcal{C}}
\def\D{\mathcal{D}}
\def\E{\mathcal{E}}
\renewcommand\S{\mathcal{S}}
\def\Ga{\Gamma}
\def\L{\Lambda}
\def\La{\Lambda}
\def\Si{\Sigma}
\def\Om{\Omega}
\def\p{\mathfrak{p}}
\def\op{\mathrm{op}}
\def\pprime{{\prime\prime}}
\def\aast{{\ast\ast}}
\def\rsimeq{\rotatebox{-90}{$\simeq$}}
\def\xsimeq{\xrightarrow{\simeq}}
\def\ysimeq{\xleftarrow{\simeq}}
\newtheorem{Thm}{Theorem}[section]
\newtheorem{Lem}[Thm]{Lemma}
\newtheorem{Prop}[Thm]{Proposition}
\newtheorem{Cor}[Thm]{Corollary}
\newtheorem{Prop-Def}[Thm]{Proposition-Definition}
\newtheorem{Thm-Def}[Thm]{Theorem-Definition}
\theoremstyle{definition}
\newtheorem{Def}[Thm]{Definition}
\newtheorem{Ex}[Thm]{Example}
\newtheorem{Qs}[Thm]{Question}
\theoremstyle{remark}
\newtheorem{Rem}[Thm]{Remark}
\newcounter{step}
\title{Reflexive modules and Auslander-type conditions}
\author{Norihiro Hanihara}
\thanks{This work is supported by JSPS KAKENHI Grant Numbers JP22KJ0737}
\subjclass[2020]{16D80, 16E65, 13C13, 16G10, 13C60, 14A22}
\keywords{Reflexive module, Auslander-type condition, quasi-abelian category, Morita-Tachikawa correspondence, exact structure, Morita theorem}
\address{Faculty of Mathematics, Kyushu University, 744 Motooka, Nishi-ku, Fukuoka, 819-0395, Japan}
\email{hanihara@math.kyushu-u.ac.jp}
\date{}
\begin{document}
\begin{abstract}
We study the category $\refl\La$ of reflexive modules over a two-sided Noetherian ring $\La$. We show that the category $\refl\La$ is quasi-abelian if and only if $\La$ satisfies certain Auslander-type condition on the minimal injective resolution of the ring itself.
Furthermore, we establish a Morita theorem which characterizes the category of reflexive modules among quasi-abelian categories in terms of generator-cogenerators.
\end{abstract}

\maketitle
\setcounter{tocdepth}{1}
\tableofcontents
\section{Introduction}
Let $\La$ be a (not necessarily commutative) left and right Noetherian ring. A finitely generated module $M$ over $\La$ is called {\it reflexive} if the evaluation map
\[ \xymatrix{ M\ar[r]&\Hom_{\La^\op}(\Hom_\La(M,\La),\La) } \]
is an isomorphism. In what follows we will write $(-)^\ast$ for $\Hom_\La(-,\La)$ or $\Hom_{\La^\op}(-,\La)$, and denote by $\refl\La$
the category of reflexive $\La$-modules. 

Reflexive modules are classically studied in commutative algebra and are also one of the important concepts since the early stage of representation theory \cite{ABr} including the Auslander correspondence \cite{Au71,Iy07b}. They also attract broad modern interest, for example, in the study of non-commutative resolutions, representation theory, and algebraic geometry \cite{VdB04,IR,IW14,BB,Kr24}.

While the definition of reflexivity is quite elementary and the category $\refl\La$ makes sense for arbitrary two-sided Noetherian rings, one needs some constraints on the ring $\La$ for this category $\refl\La$ to behave well. One well-established sufficient condition is that $\La$ should be a commutative normal domain (which is a typical setup in the theory of non-commutative resolutions). It ensures, for example, that $\refl\La$ is precisely the category of second syzygies, and that the inclusion $\refl\La\subset\mod\La$ has a left adjoint (given by the double dual $M\mapsto M^\aast$).

The aim of this paper is to demonstrate that such nice behaviors of the category $\refl\La$ is governed by the {\it Auslander-type condition}, more precisely, the {\it $(2,2)$-condition} \cite{Iy05c} which is a requirement on the minimal injective resolution on the ring $\La$ viewed as a module over itself. Examples of Noetherian rings satisfying the two-sided $(2,2)$-condition include commutative Noetherian normal domains, and Auslander algebras of representation-finite algebras.

Recall that for each positive integer $l$ and $n$, we say that $\La$ satisfies the {\it $(l,n)$-condition} \cite{Iy05c} if, in the minimal injective resolution
\[ \xymatrix{ 0\ar[r]&\La\ar[r]&I^0\ar[r]&I^1\ar[r]&\cdots } \]
of $\La$ in $\Mod\La$, we have $\fld I^i<l$ for all $0\leq i<n$. We say that $\La$ satisfies the {\it two-sided $(l,n)$-condition} if both $\La$ and $\La^\op$ satisfy the $(l,n)$-condition.
For example, the $(1,n)$-condition is nothing but the {dominant dimension} \cite{Tac} of $\La$ being at most $n$. Also, $\La$ is {$n$-Gorenstein} \cite{FGR} precisely when $\La$ satisfies the $(l,l)$-condition for all $1\leq l\leq n$.

Such Auslander-type conditions give one of the non-commutative analogues of Gorenstein ring, and appear in various contexts including the study of Auslander correspondence, non-commutative algebraic geometry, and cluster theory. We refer to \cite{AS,ATV,Au71,AR94,AR96,FGR,Gi,Iy05,Iy07b,Ke11,VdB04} for some of such studies.

%

The main result of this paper is the following characterization of the category $\refl\La$ to be quasi-abelian in the sense explained below.
\begin{Thm}[=\ref{max}]\label{imax}
Let $\La$ be a two-sided Noetherian ring. Then the category $\refl\La$ is quasi-abelian if and only if $\La$ satisfies the two-sided $(2,2)$-condition.
\end{Thm}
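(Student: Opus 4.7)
The plan is to relate the quasi-abelian structure of $\refl\La$ to the two-sided $(2,2)$-condition through the Auslander--Bridger sequence
\[ 0\to \Ext^1_{\La^{\op}}(\Tr M,\La)\to M\to M^{\aast}\to \Ext^2_{\La^{\op}}(\Tr M,\La)\to 0 \]
and the identification of $\Ext^i_{\La^{\op}}(-,\La)$ with suitable $\Tor$ groups against the minimal injective resolution $0\to\La\to I^0\to I^1\to\cdots$. The $(2,2)$-condition $\fld I^i\leq 1$ for $i=0,1$ is exactly what makes both obstruction terms vanish on natural test modules, and thereby controls the interaction of reflexivity with kernels, cokernels, and double duals.

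For the ``if'' direction, assume the two-sided $(2,2)$-condition. I would first deduce from the Auslander--Bridger sequence that $(-)^{\aast}\colon \mod\La\to \refl\La$ is well-defined, with $M^{\aast}$ reflexive for every finitely generated $M$, and provides a left adjoint to the inclusion. Second, I would show $\refl\La$ is closed under kernels in $\mod\La$, by a dimension shift on the second syzygy of a cokernel and the vanishing of the relevant Ext groups. Combining these: $\refl\La$ has kernels (inherited from $\mod\La$), cokernels (given by $(\Coker_{\mod} f)^{\aast}$), and pullbacks (inherited, then automatically reflexive). To verify the quasi-abelian axiom, take a strict epi $f\colon X\to Y$ in $\refl\La$ and a morphism $g\colon Z\to Y$; the pullback $P=X\times_Y Z$ computed in $\mod\La$ is reflexive by kernel-closure, and a diagram chase applying $(-)^{\aast}$ to the associated cokernel sequence shows $P\to Z$ is again a strict epi. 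The dual axiom on pushouts of strict monos follows by the contravariant duality $(-)^\ast$ and the symmetric hypothesis on $\La^{\op}$.

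For the ``only if'' direction, assume $\refl\La$ is quasi-abelian. The contravariant duality $(-)^\ast\colon \refl\La\to \refl(\La^{\op})^{\op}$ exchanges the two sides, so it suffices to deduce the $(2,2)$-condition on $\La$ alone. Quasi-abelianness provides kernels in $\refl\La$; using that every reflexive module embeds into some $\La^n$, the universal property identifies the $\refl\La$-kernel of $f\colon X\to Y$ with the reflexive hull $(K_0)^{\aast}$ of the $\mod\La$-kernel $K_0$. For a judicious choice of morphism---for instance, one coming from a minimal projective presentation---the stability of strict epis under pullback forces $K_0\to (K_0)^{\aast}$ to be an isomorphism, i.e., $K_0$ is itself reflexive. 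Translating this closure under kernels via Auslander--Bridger and applying it to test modules built from low-degree syzygies then extracts the flat-dimension bounds $\fld I^0,\fld I^1\leq 1$.

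The main obstacle will be the ``only if'' direction, where one must produce concrete witnesses for the homological $(2,2)$-condition from the abstract quasi-abelian axiom. The technical core is constructing test modules whose strict epimorphisms or kernels in $\refl\La$ encode the flat-dimension bounds on $I^0$ and $I^1$; without such explicit witnesses, the categorical structure alone does not directly constrain the injective resolution. Once the right test modules are identified, however, the computation reduces to a standard Ext/Tor translation, and the two directions become mirror images of one another through the Auslander--Bridger sequence.
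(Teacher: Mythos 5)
Your ``if'' direction is essentially the paper's argument: identify kernels as the $\mod\La$-kernels, cokernels as reflexive hulls $(\Coker_{\mod\La}f)^{\aast}$, prove pullback-stability of strict epimorphisms by noting that the new cokernel embeds into the old one, and transfer the pushout axiom through the duality $(-)^\ast$ applied to $\La^\op$ (the paper instead proves the pushout case directly; your dualization is a legitimate shortcut since the hypothesis is two-sided). The genuine problem is the ``only if'' direction, and you have in effect conceded it yourself: the ``test modules whose strict epimorphisms or kernels encode the flat-dimension bounds'' are exactly the missing content, not a routine afterthought. Mere existence of kernels and cokernels in $\refl\La$ only yields the weak conditions $\grade\Ext^2_\La(X,\La)\geq1$ and $\grade\Ext^2_{\La^\op}(Y,\La)\geq1$; the $(2,2)$-condition is the much stronger statement $\sgrade\Ext^2_{\La^\op}(Y,\La)\geq2$, and nothing in your sketch produces the passage from ``grade $\geq1$'' to ``strong grade $\geq2$''. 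The paper's witnesses are concrete: given a conflation $L\to M\to N$ with $X=\Coker_{\mod\La}(M\to N)$ and an arbitrary submodule $Y\subset X$, one lifts a projective surjection $P\twoheadrightarrow Y$ to $P\to N$ and pulls the conflation back to $L\to K\to P$; since cokernels in $\refl\La$ are reflexive hulls, the image $Z$ of $K\to P$ satisfies $Z\to P$ being the reflexive hull, and dualizing $0\to Z\to P\to Y\to0$ gives $\Hom_\La(Y,\La)=\Ext^1_\La(Y,\La)=0$, i.e.\ $\sgrade X\geq2$. Separately one must show every module $\Ext^2_{\La^\op}(A,\La)$ actually occurs as the cokernel of a deflation, which is done by taking a torsion-free $X$, a sequence $0\to L\to M\to X\to0$ with $L,M$ reflexive, and observing that $L\to M\to X^{\aast}$ is a conflation with cokernel $\Ext^2_{\La^\op}(\Tr X,\La)$. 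Without both steps the categorical hypothesis does not reach the injective resolution, so as it stands the ``only if'' half is not a proof.

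There is also a concrete error in your setup for that half: you assert that the kernel of $f\colon X\to Y$ in $\refl\La$ is the reflexive hull $(K_0)^{\aast}$ of the $\mod\La$-kernel $K_0$. This is wrong; since $\La\in\refl\La$ and $\Hom_\La(\La,-)$ recovers the underlying module, any kernel existing in $\refl\La$ must already coincide with $K_0$ (so the existence of kernels forces $K_0$ to be reflexive). It is the cokernel, not the kernel, that is computed as a reflexive hull, and this asymmetry is precisely what the pullback argument above exploits. Finally, note that the natural output of the categorical argument is the strong-grade formulation of the $(2,2)$-condition; the flat-dimension bounds $\fld I^0,\fld I^1\leq1$ are then obtained only through the equivalence of the two formulations (via the Tor--Ext formula for injectives), not directly as you suggest.
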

Here, an additive category $\C$ is {\it quasi-abelian} if it has kernels and cokernels, kernels are closed under push-outs, and cokernels are closed under pull-backs. It implies that $\C$ has the maximum exact structure given by the class of all kernel-cokernel pairs. This means that the category $\C$ has an intrinsic homological algebra.

To be explicit, we record the following summary of the desirable behaviors of the category $\refl\La$ in the setting of \ref{imax}. Note that (1)(2) appear in \cite[1.7]{AR96}.
\begin{Prop}
In the situation of \ref{imax}, we have the following.
\begin{enumerate}
\item $\refl\La$ coincides with the category of second sygygies in $\mod\La$.
\item $\refl\La\subset\mod\La$ is closed under kernels and extensions.
\item The inclusion $\refl\La\subset\mod\La$ has a left adjoint given by the double dual $X\mapsto X^\aast$.
\end{enumerate}
\end{Prop}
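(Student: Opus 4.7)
Since (1) and (2) are classical and stated in \cite[1.7]{AR96}, I will briefly indicate their proofs and focus on (3), which is the principal new content.

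For (1), the inclusion $\refl\La \subseteq \Omega^2(\mod\La)$ is unconditional: given $M \in \refl\La$ and a projective presentation $P_1 \to P_0 \to M^\ast \to 0$ in $\mod\La^{\op}$, left exactness of $(-)^\ast$ yields $0 \to M \to P_0^\ast \to P_1^\ast$, exhibiting $M$ as a second syzygy. The reverse inclusion uses the Auslander--Bridger exact sequence
\[
0 \to \Ext^1_{\La^{\op}}(\Tr M, \La) \to M \to M^{\aast} \to \Ext^2_{\La^{\op}}(\Tr M, \La) \to 0
\]
to reduce reflexivity to vanishing of the two $\Ext$-terms: $\Ext^1$ vanishes because a second syzygy is in particular torsionless, while $\Ext^2 = 0$ is the substantive consequence of the two-sided $(2,2)$-condition, deducible by a spectral-sequence argument exploiting that the initial terms $I^0, I^1$ of the minimal injective resolution of $\La^{\op}$ have flat dimension $\leq 1$. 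Given (1), the closure properties of (2) follow from standard horseshoe and snake arguments, reducing to the stability of the torsionless property under extensions.

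For (3), the key observation is that $X^{\aast}$ is always a second syzygy: dualizing a projective presentation $P_1 \to P_0 \to X^\ast \to 0$ of $X^\ast$ produces $0 \to X^{\aast} \to P_0^\ast \to P_1^\ast$, so $X^{\aast} \in \refl\La$ by (1). This defines a functor $(-)^{\aast} \colon \mod\La \to \refl\La$, and the biduality transformation $\eta$ serves as the unit of the adjunction: for $Y \in \refl\La$, the assignment
\[
\Hom_\La(X^{\aast}, Y) \longrightarrow \Hom_\La(X, Y), \qquad g \longmapsto g \circ \eta_X,
\]
is a natural bijection, with inverse $f \mapsto \eta_Y^{-1} \circ f^{\aast}$. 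Surjectivity follows from the naturality of $\eta$ (which gives $(\eta_Y^{-1} \circ f^{\aast}) \circ \eta_X = \eta_Y^{-1} \circ \eta_Y \circ f = f$), and injectivity from the triangular identity $(\eta_Y)^\ast \circ \eta_{Y^\ast} = 1_{Y^\ast}$ together with $\eta_Y$ being an isomorphism, which together imply that $g \circ \eta_X = 0$ forces $g^{\aast} = 0$ and hence $g = 0$.

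The principal obstacle is the vanishing of $\Ext^2_{\La^{\op}}(\Tr M, \La)$ for second syzygies $M$, which is where the two-sided $(2,2)$-condition is genuinely used and is the content of \cite[1.7]{AR96}; once this is in place, the remaining items are formal consequences of (1) and standard properties of biduality.
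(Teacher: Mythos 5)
The treatment of (3) contains a genuine gap at the injectivity step, and it is exactly the point where the $(2,2)$-condition must enter. From $g\circ\eta_X=0$ with $g\colon X^{\aast}\to Y$, dualizing gives $(\eta_X)^\ast\circ g^\ast=0$, and to cancel $(\eta_X)^\ast$ on the left you need it to be \emph{injective}, i.e.\ you need $X^\ast$ to be reflexive; the triangular identity only makes $(\eta_X)^\ast$ a split epimorphism, and the identity you invoke, $(\eta_Y)^\ast\circ\eta_{Y^\ast}=1_{Y^\ast}$ together with $\eta_Y$ being an isomorphism, is a statement about $Y$ that gives no control over $(\eta_X)^\ast$ at all. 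Moreover, no purely formal argument can work here: by the paper's Lemma on cokernels (\ref{EX-coker}), injectivity of $\Hom_\La(X^{\aast},Y)\to\Hom_\La(X,Y)$ for all $X$ and all reflexive $Y$ (already for $Y=\La$) is \emph{equivalent} to $\grade\Ext^2_{\La^\op}(A,\La)\geq1$ for all $A\in\mod\La^\op$, a condition that fails for general Noetherian rings, whereas your injectivity argument nowhere uses the hypothesis on $\La$. The repair is short and is the paper's own route: a $g$ with $g\circ\eta_X=0$ kills $\Im\eta_X$, hence factors through $\Coker(X\to X^{\aast})=\Ext^2_{\La^\op}(\Tr X,\La)$; the $(2,2)$-condition gives $\sgrade\Ext^2_{\La^\op}(\Tr X,\La)\geq2$, so in particular this module has no nonzero homomorphism to $\La$, hence none to the torsionless module $Y$, forcing $g=0$. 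Alternatively, within your own setup: $X^\ast$ is always a second syzygy in $\mod\La^\op$ (dualize a projective presentation of $X$), so by (1) applied to $\La^\op$ it is reflexive, whence $(\eta_X)^\ast$ is an isomorphism and your cancellation goes through; but this again uses the two-sided hypothesis, not formal biduality.

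Two smaller remarks. Your surjectivity argument and the observation that $X^{\aast}$ is reflexive agree with the paper's proof of \ref{EX-coker} (the paper gets reflexivity of $X^\ast$ by specializing the adjunction isomorphism at $M=\La$ rather than via second syzygies, which is an equally valid route). For (2), however, "stability of the torsionless property under extensions" is not a standard fact: over a general Noetherian ring torsionless (and reflexive) modules are \emph{not} closed under extensions, and this closure is precisely what the strong-grade conditions of \cite[1.7]{AR96} govern. Since the paper also defers (1) and (2) to \cite{AR96} this is mainly a matter of phrasing, but as written it suggests a general fact that is false and hides where the hypothesis is used.
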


We further discuss when the category $\refl\La$ is moreover abelian. In fact we have the following simple and analogous characterization.
\begin{Thm}[=\ref{abelian}]\label{iabelian}
Let $\La$ be a Noetherian ring. Then category $\refl\La$ is abelian if and only if it satisfies the $(1,2)$-condition, in other words, $\ddim\La\geq2$.
\end{Thm}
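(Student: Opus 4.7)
My plan is to prove the two implications separately using the Morita--Tachikawa correspondence in conjunction with Theorem~\ref{imax} and the reflexive hull functor furnished by the preceding proposition.

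For the $(\Leftarrow)$ direction, assume $\La$ satisfies the $(1,2)$-condition, equivalently $\ddim\La\geq 2$. By the classical Morita--Tachikawa correspondence, there exist a two-sided Noetherian ring $\Gamma$ and a finitely generated generator-cogenerator $M\in\mod\Gamma$ with $\La\simeq\End_\Gamma(M)^{\op}$ such that the functor $\Hom_\Gamma(M,-)\colon\mod\Gamma\to\mod\La$ is fully faithful with essential image $\refl\La$. Consequently $\refl\La\simeq\mod\Gamma$ is abelian. This direction is essentially a direct application of a known classical result.

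For the $(\Rightarrow)$ direction, assume $\refl\La$ is abelian. Since abelian implies quasi-abelian, Theorem~\ref{imax} supplies the two-sided $(2,2)$-condition, so the preceding proposition applies: we have the reflexive hull functor $(-)^{\aast\aast}\colon\mod\La\to\refl\La$ as left adjoint to the inclusion. I first verify that $\La\in\refl\La$ is a generator-cogenerator of $\refl\La$: it is a generator since every reflexive $X$ is a quotient of some $\La^n$ in $\mod\La$, hence also in $\refl\La$; it is a cogenerator since $X\simeq X^{\aast\aast}$ embeds into $\La^k$ by choosing a finite generating set of the finitely generated dual $X^\ast$. Moreover $\End_{\refl\La}(\La)=\La$. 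The Morita theorem for reflexive modules developed in this paper (announced in the abstract), applied to the abelian category $\refl\La$ equipped with the generator-cogenerator $\La$, identifies $\refl\La$ with $\mod\Gamma$ for some two-sided Noetherian ring $\Gamma$, under which $\La$ corresponds to a finitely generated generator-cogenerator $M\in\mod\Gamma$ with $\End_\Gamma(M)^{\op}\simeq\La$. This is precisely the data of Morita--Tachikawa, so $\ddim\La\geq 2$ and the $(1,2)$-condition holds.

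The main obstacle is the converse direction, specifically carrying out the application of the paper's Morita theorem in the abelian case. An alternative route is via exactness of the reflector: a reflective subcategory of an abelian category is itself abelian if and only if the reflector is exact, and one must show that exactness of $(-)^{\aast\aast}$ on $\mod\La$ is equivalent to the $(1,2)$-condition. This reduces to analyzing the kernel $\{X\in\mod\La\mid X^\ast=0\}$ of the reflector and its Serre property, which translates into the standard vanishing conditions on $\Ext^i(-,\La)$ characterizing the Auslander-type conditions as in \cite{Iy05c}; the spectral sequence computing $(-)^{\aast\aast}$ from $(-)^\ast$ then yields the desired flatness of $I^0$ and $I^1$.
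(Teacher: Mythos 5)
Your proposal has genuine gaps in both directions. For the ``if'' direction, you invoke ``the classical Morita--Tachikawa correspondence'' for an arbitrary Noetherian ring $\La$ with $\ddim\La\geq2$, but the classical statement (\ref{MT}) is only available for finite dimensional (or Artin) algebras; the paper's point, stated explicitly in the introduction, is that \ref{abelian} itself \emph{is} the extension of Morita--Tachikawa to Artinian rings not module-finite over the center, so citing that extension is circular. Moreover, even where \ref{MT} applies, it does not assert that the essential image of $\Hom_\Gamma(M,-)$ is $\refl\La$; that identification is an additional statement (Proposition \ref{mt}) requiring its own proof. For the ``only if'' direction, the application of the paper's Morita theorem \ref{Morita} to $(\C,M)=(\refl\La,\La)$ is vacuous: its output is an equivalence $\C\simeq\refl(\End_\C(M))$, i.e.\ $\refl\La\simeq\refl\La$, together with the two-sided $(2,2)$-condition, which you already had from \ref{max}; it produces no ring $\Gamma$ with $\refl\La\simeq\mod\Gamma$ (note that $\La$ is in general \emph{not} projective in the abelian category $\refl\La$, so no Gabriel-type projective-generator argument applies either), and it never yields the $(1,2)$-condition. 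Your final step ``this is precisely the data of Morita--Tachikawa, so $\ddim\La\geq2$'' is again the nontrivial converse of Morita--Tachikawa at a generality where it is not available. The alternative sketch via exactness of the reflector $(-)^{\aast}$ is also not an argument as it stands: the asserted equivalence ``reflective subcategory abelian iff reflector exact'' is not justified, and the reduction to flatness of $I^0,I^1$ is only gestured at.

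For comparison, the paper argues directly and elementarily: by \ref{tors}, the $(1,2)$-condition is equivalent to $\sgrade C\geq2$ for every torsion module $C$ (torsion modules being exactly the $\Ext^1_{\La^\op}(-,\La)$'s). Assuming $\ddim\La\geq2$, one checks that every monomorphism in $\refl\La$ has torsion-free cokernel (using \ref{easy}) and every epimorphism has torsion cokernel of $\sgrade\geq2$, so all monos and epis are admissible by \ref{adm}, whence $\refl\La$ is abelian. Conversely, if $\refl\La$ is abelian, then for a torsion module $C$ with projective presentation $P_1\to P_0\to C\to0$ the map $P_1\to P_0$ is an epimorphism in $\refl\La$, hence admissible, and \ref{confl} gives $\sgrade C\geq2$. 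If you want to keep a Morita--Tachikawa flavour, you would first have to prove the needed correspondence for general Artinian rings, which is essentially equivalent to the theorem you are trying to establish.
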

Note that by left-right symmetry of the dominant dimension \cite{Ho}, the $(1,2)$-condition and the two-sided $(1,2)$-condition are equivalent. Furthermore, we know that any Noetherian ring of dominant dimension greater than $1$ is Artinian \cite[Proposition 7]{IwS}, thus what \ref{iabelian} gives is Morita-Tachikawa correspondence (see \ref{MT}) for Artinian rings (not necessarily module-finite over the center).

Apart from the maximum exact structure in \ref{imax}, we will present in \ref{subset} how to produce more exact structures on $\refl\La$ from certain Serre subcategories of $\mod\La$.

\medskip

Now, our main result \ref{imax} motivates us to study {\it Morita theory} for quasi-abelian categories. Most classically, Morita theorem gives a characterization of module categories among abelian categories in terms of projective generators \cite{Ga}. In particular, it yields a complete characterization of rings having the equivalent module categories. Such rings are called Morita equivalent, and Morita theory can be understood as describing the structure of the category of rings up to Morita equivalence.

Our Morita theorem for quasi-abelian categories gives a characterization of the category of reflexives among quasi-abelian categories. We say that an object $M$ in an exact category is a {\it generator-cogenerator} if each object admits a deflation from (resp. an inflation into) an object in $\add M$.
\begin{Thm}[=\ref{Morita}]\label{iMorita}
Let $\C$ be a quasi-abelian category. Suppose that there is a generator-cogenerator $M\in\C$ such that $\La:=\End_\C(M)$ is two-sided Noetherian. Then $\La$ satisfies the two-sided $(2,2)$-condition and the functor $\Hom_\C(M,-)\colon\C\to\refl\La$ is an equivalence.
\end{Thm}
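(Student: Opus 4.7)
The plan is to show that $F := \Hom_\C(M,-) \colon \C \to \Mod\La$ induces an equivalence of categories $\C \simeq \refl\La$; once this is established, $\refl\La$ inherits quasi-abelianness from $\C$, and Theorem~\ref{max} forces $\La$ to satisfy the two-sided $(2,2)$-condition. I would first record the basic properties of $F$: by Yoneda's lemma, it restricts to an equivalence $F|_{\add M}\colon\add M \xrightarrow{\simeq} \add\La = \proj\La$; and it is left exact because the quasi-abelian structure supplies kernels of kernel-cokernel pairs that $F$ preserves. For any $X \in \C$, iterating the cogenerator property (first to $X$, then to $\coker(X \to M^0)$ in $\C$) yields a copresentation $0 \to X \to M^0 \to M^1$ with $M^i \in \add M$, and applying $F$ gives an exact sequence $0 \to FX \to FM^0 \to FM^1$ in $\Mod\La$ with $FM^i \in \add\La$, so $FX$ is automatically a second syzygy over $\La$. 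Dually, the generator property produces $M_1 \to M_0 \to X$ with $X = \coker(M_1 \to M_0)$ in $\C$, and the universal property of cokernels gives $0 \to \Hom_\C(X,M) \to \Hom_\C(M_0,M) \to \Hom_\C(M_1,M)$ exact in $\Mod\La^\op$.

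For essential surjectivity onto $\refl\La$: given $N \in \refl\La$, take a projective presentation $Q_1 \to Q_0 \to N^\ast \to 0$ in $\mod\La^\op$; dualizing and using $N \simeq N^{\ast\ast}$ yields $0 \to N \to Q_0^\ast \to Q_1^\ast$ exact in $\Mod\La$ with $Q_i^\ast \in \add\La$. Lifting $Q_0^\ast \to Q_1^\ast$ via Yoneda to $M^0 \to M^1$ in $\add M$, the kernel $X := \ker(M^0 \to M^1) \in \C$ (which exists as $\C$ is quasi-abelian) satisfies $FX \cong N$ by left exactness of $F$. Thus the essential image of $F$ contains $\refl\La$.

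For fully faithfulness and the containment $F(\C) \subseteq \refl\La$: the key tool is the natural comparison map $\eta_X \colon \Hom_\C(X,M) \to (FX)^\ast$ sending $\phi$ to $f \mapsto \phi \circ f$, together with its companion $\eta'_X \colon FX \to \Hom_\C(X,M)^\ast$; both are isomorphisms for $X \in \add M$ by Yoneda. Combining the copresentation of $X$ (which identifies $FX$ as $\ker(FM^0 \to FM^1)$) with the presentation (which identifies $\Hom_\C(X,M)$ as $\ker(\Hom_\C(M_0,M) \to \Hom_\C(M_1,M))$), and using the Yoneda isomorphisms on $\add M$, one shows $\eta_X$ and $\eta'_X$ are isomorphisms for all $X \in \C$. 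Composing them identifies $FX \simeq (FX)^{\ast\ast}$, so $F(\C) \subseteq \refl\La$. Fully faithfulness then follows from left exactness of both $\Hom_\C(X,-)$ and $\Hom_\La(FX,-)$ applied to the copresentation of $Y$, which reduces to $Y \in \add M$ and hence to the isomorphism $\eta_X$. Combined with Theorem~\ref{max}, this completes the proof. The main obstacle is precisely the verification that $\eta_X$ (equivalently $\eta'_X$) is an isomorphism for all $X$: since $F$ is only left exact and $(-)^\ast$ is not right exact, neither the presentation nor the copresentation alone gives enough exactness after dualizing, and one must delicately combine both the generator and cogenerator structures on $M$ with the quasi-abelian structure of $\C$ to match up the resulting "second syzygy" descriptions of $FX$ and $\Hom_\C(X,M)$.
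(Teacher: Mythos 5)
Your overall strategy is the same as the paper's (prove that $F=\Hom_\C(M,-)$ lands in $\refl\La$ and is an equivalence, then invoke \ref{max} to get the two-sided $(2,2)$-condition), and your essential surjectivity argument and the reduction of reflexivity to the two comparison maps $\eta_X\colon\Hom_\C(X,M)\to (FX)^\ast$ and $\eta'_X\colon FX\to\Hom_\C(X,M)^\ast$ are exactly the paper's Lemma \ref{refv}. But the step you yourself flag as ``the main obstacle'' is a genuine gap, not a detail: nothing in your argument makes either row of the intended comparison diagrams exact on the dualized side. From the generator property alone, a deflation $M_0\to X$ need not stay surjective after applying $F$ (that would require $M$ to be projective for the exact structure), so $FM_1\to FM_0\to FX\to0$ need not be exact and the presentation does not compute $(FX)^\ast$ as $\ker\bigl((FM_0)^\ast\to(FM_1)^\ast\bigr)$; dually, the copresentation $0\to FX\to FM^0\to FM^1$ gives no control on $(FX)^\ast$ because $(-)^\ast$ is only left exact. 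So ``matching up the two second-syzygy descriptions'' cannot be done with the data you have assembled.

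The missing idea is where the Noetherian hypothesis on $\La$ actually enters. In the paper, since $M$ is a cogenerator and $\La$ is left Noetherian (hence coherent), every $FX$ sits in $0\to FX\to FM^0\to FM^1$ and is therefore finitely presented, which makes $\add M$ contravariantly finite (\ref{funf}); combining a deflation coming from the generator property with a right $(\add M)$-approximation coming from the cogenerator property, one obtains for each $X$ a deflation $M_0\to X$ that is simultaneously a right $(\add M)$-approximation (\ref{zensha}). For such presentations $M_1\to M_0\to X\to0$ the sequence $FM_1\to FM_0\to FX\to0$ \emph{is} exact, so dualizing it and comparing with $\Hom_\C(-,M)$ applied to the presentation (using the Yoneda isomorphisms on $\add M$) yields $\Hom_\C(X,M)\simeq (FX)^\ast$; the dual construction gives $FX\simeq\Hom_\C(X,M)^\ast$, hence reflexivity of $FX$, and the same approximation presentations drive the five-lemma argument for full faithfulness. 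Your proof becomes complete once you insert this approximation step; without it, the claim that $\eta_X$ and $\eta'_X$ are isomorphisms is unsupported.
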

Consequently, we obtain a characterization of {\it reflexive equivalence} of Noetherian rings (\ref{refeq}), that is, those having the equivalent category of reflexives. We refer to \cite{IR} for a result for module-finite algebras.

\subsection*{Acknowledgements}
The author is deeply grateful to Alexey Bondal, Martin Kalck, Henning Krause, Osamu Iyama, Bernhard Keller, Hiroki Matsui, and Shinnosuke Okawa for stimulating discussions and valuable comments.

\section{Auslander-type conditions}\label{AG}
Throughout this paper, a Noetherian ring will mean a (not necessarily commutative) left and right Noetherian ring unless stated otherwise.

\subsection{Auslander-type conditions and Auslander-Gorenstein rings}
We review the concept of {\it Auslander-type conditions} following \cite{Iy05c}. It is a vast generalization of dominant dimension \cite{Tac} which plays a prominent role in Auslander correspondences \cite{Au71,Iy07b}, and at the same time covers the notion of {\it Auslander-Gorenstein rings} \cite{AR94,FGR}.

Before that we recall the notion of (strong) grade which gives an important reformulation of Auslander-type conditions and will be used throughout the paper.
\begin{Def}
Let $\La$ be a Noetherian ring and $M\in\Mod\La$.
\begin{enumerate}
\item The {\it grade} of $M$, denoted $\grade M$, is the minimum integer $n\geq0$ such that $\Ext^n_\La(M,\La)\neq0$.
\item The {\it strong grade} of $M$, denoted $\sgrade M$, is defined as
\[ \sgrade M=\inf\{n\geq0\mid \Ext^n_\La(N,\La)\neq0 \text{ for some submodule }N\subset M\}. \]
In other words, we have $\sgrade M\geq n$ if and only if $\grade N\geq n$ for all submodules $N\subset M$.
\end{enumerate}
\end{Def}

The following description of the strong grade is useful.
\begin{Prop}\label{sgrade}
	Let $X\in\Mod\La$ and $0\to\La\to I^0\to I^1\to\cdots$ be the minimal injective resolution. Then
	\[ \sgrade X=\inf\{n\geq0 \mid \Hom_\La(X,I^n)\neq0\}. \]
	In particular, for each $n\geq0$ the subcategory $\{X\in\Mod\La\mid \sgrade X\geq n\}\subset\Mod\La$ is a Serre subcategory.
\end{Prop}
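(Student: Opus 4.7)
The plan is to establish the formula $\sgrade X=\nu$, where $\nu:=\inf\{n\geq0\mid \Hom_\La(X,I^n)\neq0\}$, by two inequalities; the properties of the minimal injective resolution play the decisive role. For $\sgrade X\geq \nu$, I would exploit injectivity of each $I^n$: applied to the sequence $0\to N\to X\to X/N\to 0$ associated with a submodule $N\subset X$, the functor $\Hom_\La(-,I^n)$ is exact, so the restriction $\Hom_\La(X,I^n)\to\Hom_\La(N,I^n)$ is surjective. Hence the vanishing $\Hom_\La(X,I^n)=0$ for $n<\nu$ propagates to $\Hom_\La(N,I^n)=0$ in the same range, yielding $\Ext^n_\La(N,\La)=H^n\Hom_\La(N,I^\bullet)=0$ and therefore $\grade N\geq \nu$ for every submodule $N$.

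For the reverse inequality $\sgrade X\leq \nu$, the crucial input is \emph{minimality}: the inclusion $\Ker(d^\nu)\hookrightarrow I^\nu$ is essential since $I^\nu$ is the injective envelope of $\Im(d^{\nu-1})=\Ker(d^\nu)$. Given a nonzero $f\colon X\to I^\nu$, I would set $N:=f^{-1}(\Ker d^\nu)\subset X$; then $f(N)=\Im(f)\cap\Ker(d^\nu)$ is nonzero by essentiality, so $f|_N\colon N\to I^\nu$ defines a nonzero cocycle in $\Hom_\La(N,I^\bullet)$. Since $\Hom_\La(N,I^{\nu-1})=0$ by the surjectivity argument of the previous step, no coboundaries can kill it, whence $\Ext^\nu_\La(N,\La)\neq 0$ and $\grade N=\nu$, as required.

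The Serre subcategory assertion then follows formally from the formula: closure under submodules is immediate from the very definition of strong grade, while closure under quotients and extensions reduces via exactness of $\Hom_\La(-,I^n)$ to the fact that any short exact sequence $0\to A\to B\to C\to 0$ yields a short exact sequence $0\to\Hom_\La(C,I^n)\to\Hom_\La(B,I^n)\to\Hom_\La(A,I^n)\to 0$, so the condition $\Hom_\La(-,I^n)=0$ for $n<\nu$ is preserved under both operations. The only genuinely subtle point is the second inequality, where essentiality is indispensable: without minimality, one could imagine an $f\colon X\to I^\nu$ whose image misses $\Ker(d^\nu)$, blocking the restriction-to-submodule trick that produces a nontrivial $\Ext^\nu$ class.
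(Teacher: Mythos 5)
Your proof is correct; the paper states this proposition without proof, and your argument — exactness of $\Hom_\La(-,I^n)$ (injectivity of $I^n$) for $\sgrade X\geq\nu$, essentiality of $\Ker(d^\nu)\subset I^\nu$ coming from minimality for the reverse inequality, and the same exactness for the Serre-subcategory claim — is exactly the standard argument the paper leaves implicit. The only cosmetic point is the boundary case $\nu=0$, where $\Im(d^{\nu-1})$ should be read as the image of $\La\to I^0$, which is still essential in $I^0=E(\La)$, so the argument goes through unchanged.
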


The following computation of the flat dimension of injective modules is also fundamental.
\begin{Prop}[{\cite[2.1]{AR96}}]\label{TorI}
Let $I$ be an injective module over a Noetherian ring $\La$. Then for all $X\in\mod\La^\op$ and $n\geq0$ we have an isomorphism
\[ \Tor_n^\La(X,I)=\Hom_\La(\Ext^n_{\La^\op}(X,\La),I). \]
Therefore, one has $\fld I<l$ if and only if $\Hom_\La(\Ext^l_{\La^\op}(X,\La),I)=0$ for all $X\in\mod\La^\op$.
\end{Prop}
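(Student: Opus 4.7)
The plan is a standard d\'evissage using a finitely generated projective resolution of $X$, which exists since $\La$ is right Noetherian and $X\in\mod\La^\op$. I would first establish the base case: for any finitely generated projective right $\La$-module $P$ and any left $\La$-module $I$, there is a natural isomorphism
\[ P\otimes_\La I\xsimeq\Hom_\La(P^\ast,I),\qquad p\otimes x\longmapsto\bigl(f\mapsto f(p)\,x\bigr), \]
where $P^\ast=\Hom_{\La^\op}(P,\La)$ is viewed as a left $\La$-module. Both sides are additive in $P$ and for $P=\La_\La$ reduce to $I$ under the evident identifications, so the isomorphism follows by finite additivity.

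Next, I would choose a resolution $P_\bullet\to X$ by finitely generated projective right $\La$-modules. Applying the base case termwise yields an isomorphism of chain complexes
\[ P_\bullet\otimes_\La I\xsimeq\Hom_\La(P_\bullet^\ast,I), \]
where $P_\bullet^\ast$ is a cochain complex of left $\La$-modules and $\Hom_\La(-,I)$ turns it back into a chain complex. Taking $H_n$ of each side: on the left one obtains $\Tor_n^\La(X,I)$ by definition; on the right, injectivity of $I$ makes $\Hom_\La(-,I)$ exact, so cohomology commutes with it, giving
\[ H_n\bigl(\Hom_\La(P_\bullet^\ast,I)\bigr)=\Hom_\La\bigl(H^n(P_\bullet^\ast),I\bigr)=\Hom_\La\bigl(\Ext^n_{\La^\op}(X,\La),I\bigr). \]
This proves the claimed isomorphism.

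For the criterion $\fld I<l$, by definition this is the vanishing of $\Tor_l^\La(-,I)$ on all of $\Mod\La^\op$. Since $\La$ is Noetherian, every right $\La$-module is a filtered colimit of its finitely generated submodules, and $\Tor$ commutes with filtered colimits, so it suffices to test vanishing on $X\in\mod\La^\op$. Combining this reduction with the isomorphism just established translates the condition into $\Hom_\La(\Ext^l_{\La^\op}(X,\La),I)=0$ for all $X\in\mod\La^\op$.

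This is a classical computation and no single step is a serious obstacle; the argument is routine homological algebra. The only mildly subtle point is the passage from arbitrary modules to finitely generated ones in the flat dimension criterion, which is exactly where Noetherianness of $\La$ enters.
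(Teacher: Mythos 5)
Your argument is correct and is exactly the computation the paper has in mind: the paper's one-line proof also reduces to the natural isomorphism $P\otimes_\La I\simeq\Hom_\La(P^\ast,I)$ for $P\in\proj\La^\op$ applied termwise to a finitely generated projective resolution of $X$, with injectivity of $I$ allowing $\Hom_\La(-,I)$ to pass through (co)homology. The flat-dimension criterion via testing $\Tor_l$ on finitely generated modules (using the Noetherian hypothesis and filtered colimits) is the standard completion of the same route.
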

\begin{proof}
	This is easily seen by computing from the projective resolution of $X$ and using the formula $P\otimes_\La I=\Hom_\La(P^\ast,I)$ for $P\in\proj\La^\op$.
\end{proof}

We are now ready to state the definition of the Auslander-type condition. The equivalence of the conditions below follows easily from \ref{sgrade} and \ref{TorI}.
\begin{Def}[{\cite{Iy05c}}]
Let $l$ and $n$ be positive integers. The {\it $(l,n)$-condition} on a Noetherian ring $\La$ is the following equivalent conditions, where $0\to\La\to I^0\to I^1\to\cdots$ is the minimal injective resolution in $\Mod\La$.
\begin{enumerate}
	\renewcommand\labelenumi{(\roman{enumi})}
	\renewcommand\theenumi{\roman{enumi}}
	\item $\fld I^i<l$ for all $0\leq i<n$.
	\item $\sgrade\Ext^l_{\La^\op}(X,\La)\geq n$ for all $X\in\mod\La^\op$.
\end{enumerate}
The {\it $(l,n)^\op$-condition} on $\La$ is by definition the $(l,n)$-condition on $\La^\op$. We say that $\La$ satisfies the {\it two-sided $(l,n)$-condition} if it satisfies the $(l,n)$- and $(l,n)^\op$-conditions. 
\end{Def}

The $(l,n)$-conditions include the following notion of $n$-Gorenstein rings.
\begin{Def}[{\cite{AR94,FGR}}]
Let $n\geq0$ be an integer. A Noetherian ring $\La$ is {\it $n$-Gorenstein} if in the minimal injective resolution
\[ \xymatrix{ 0\ar[r]&\La\ar[r]&I^0\ar[r]&I^1\ar[r]&\cdots }, \]
one has $\fld I^i\leq i$ for all $0\leq i<n$. We say that a Noetherian ring $\La$ is {\it Auslander-Gorenstein} if it is $n$-Gorenstein for all $n$ and $\id\L<\infty$ on each side. Similarly, we say $\La$ is {\it Auslander-regular} if it is $n$-Gorenstein for all $n$ and $\gd\La<\infty$.
\end{Def}
Indeed, $\La$ is $n$-Gorenstein if and only if it satisfies the $(l,l)$-condition for all $1\leq i\leq n$.
It is known that being $n$-Gorenstein is left-right symmetric \cite[3.7]{FGR}. 


\subsection{Commutative rings}
Let us specialize the above notion to commutative rings which are one of our important examples. This will show that Auslander-type conditions indeed generalize commutative Gorenstein rings.
Throughout this subsection we let $R$ be a commutative Noetherian ring. We refer to \cite{RF} for more information. Let us record the proof of the following, which should be the most fundamental one on the flat dimension of injective modules over commutative rings. 
\begin{Lem}\label{fd}
Let $\p\in\Spec R$. Then $\fld E_R(R/\p)$ is finite if and only if $R_\p$ is Gorenstein. In this case, one has $\fld E_R(R/\p)=\height\p$.
\end{Lem}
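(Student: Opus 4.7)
The plan is to reduce the computation of $\fld E_R(R/\p)$ to the classical local characterization of Gorenstein rings, by combining Proposition \ref{TorI} with Matlis duality at $\p$.

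First, I would apply Proposition \ref{TorI} to the injective module $I = E_R(R/\p)$, yielding
\[ \Tor^R_n(X, E_R(R/\p)) \cong \Hom_R(\Ext^n_R(X, R), E_R(R/\p)) \]
for all $X \in \mod R$ and $n \geq 0$. Thus $\fld E_R(R/\p) < l$ if and only if $\Hom_R(\Ext^l_R(X, R), E_R(R/\p)) = 0$ for every $X \in \mod R$.

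Next, I would use that $E_R(R/\p) \cong E_{R_\p}(k(\p))$ is a $\p$-local module, and, being the Matlis module of the local ring $R_\p$, is an injective cogenerator of $\Mod R_\p$. Hence for any finitely generated $R$-module $M$ one has $\Hom_R(M, E_R(R/\p)) = \Hom_{R_\p}(M_\p, E_R(R/\p))$, and this vanishes if and only if $M_\p = 0$. Combining this with the standard isomorphism $\Ext^l_R(X, R)_\p \cong \Ext^l_{R_\p}(X_\p, R_\p)$ for $X \in \mod R$ and the fact that every finitely generated $R_\p$-module arises as $X_\p$, the previous step yields
\[ \fld E_R(R/\p) < l \ \Longleftrightarrow\ \Ext^l_{R_\p}(Y, R_\p) = 0 \text{ for all } Y \in \mod R_\p \ \Longleftrightarrow\ \id_{R_\p} R_\p < l. \]
So $\fld E_R(R/\p) = \id_{R_\p} R_\p$.

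The conclusion then follows from the classical fact that a local Noetherian ring is Gorenstein if and only if it has finite self-injective dimension, in which case this dimension equals its Krull dimension. Therefore $\fld E_R(R/\p)$ is finite if and only if $R_\p$ is Gorenstein, in which case it equals $\height \p$. The only point requiring genuine care is the equivalence ``$\Hom_R(M, E_R(R/\p)) = 0$ iff $M_\p = 0$'' for finitely generated $M$, which uses cogenerator-ness of $E_R(R/\p)$ over $R_\p$; the rest of the argument is routine bookkeeping with Proposition \ref{TorI} and localization.
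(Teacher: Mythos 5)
Your proof is correct and follows essentially the same route as the paper: both reduce via Proposition \ref{TorI} and localization at $\p$ to the vanishing of $\Ext^i_{R_\p}(-,R_\p)$ on $\mod R_\p$, i.e.\ to $\id_{R_\p}R_\p$, and then invoke the classical characterization of Gorenstein local rings by finite self-injective dimension. The only cosmetic difference is that you justify the reduction ``$\Hom_R(M,E_R(R/\p))=0$ iff $M_\p=0$'' via the Matlis cogenerator property over $R_\p$, whereas the paper argues via $\Ass E_R(R/\p)=\{\p\}$; both are standard and equally valid.
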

\begin{proof}
	We use the following to compute the flat dimension of an injective module $I$: For a Noetherian ring $\La$ and $M\in\mod\La^\op$, there is an isomorphism $\Tor_i^\La(M,I)=\Hom_\La(\Ext^i_{\La^\op}(M,\La),I)$ (cf.\! \cite[2.1]{AR96}).
	
	
	Let $\p\in\Spec R$ and consider the $R$-module $\Tor_i^R(M,E_R(R/\p))=\Hom_R(\Ext^i_{R}(M,R),E_R(R/\p))$ for each $M\in\mod R$. The flat dimension of $E_R(R/\p)$ is $\leq n$ if and only if this is $0$ for all $M\in\mod R$ and $i>n$. Since $\Ext^i_R(M,R)$ is finitely generated and $\Ass E_R(R/\p)=\{\p\}$, the set of associated primes of the right-hand-side is contained in $\{\p\}$, thus it is $0$ if and only if it is annihilated by $-\otimes_R R_\p$. Localizing this at $\p$ yields $\Hom_{R_\p}(\Ext^i_{R_\p}(M_\p,R_\p),E_{R_\p}(R_\p/\p R_\p))$, which is $0$ if and only if $\Ext^i_{R_\p}(M_\p,R_\p)=0$. Noting that the functor $-\otimes_RR_\p\colon\mod R\to \mod R_\p$ is essentially surjective, we conclude that $\fld E_R(R/\p)\leq n$ exactly when $R_\p$ is Gorenstein of dimension $\leq n$.
\end{proof}

\begin{Lem}\label{fd=n}
Let $0\to R\to I^0\to I^1\to\cdots$ be the minimal injective resolution. Suppose that $\fld I^n<\infty$ for some $n\geq0$. Then every indecomposable summand of $I^n$ has flat dimension precicely $n$.
\end{Lem}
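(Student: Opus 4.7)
The plan is to combine Matlis's theorem on indecomposable injective modules with the known structure of the minimal injective resolution over a Gorenstein local ring. First I would note that any indecomposable summand of $I^n$ has the form $E_R(R/\p)$ for some prime $\p\in\Spec R$, and since $\fld I^n<\infty$ implies $\fld E_R(R/\p)<\infty$, Lemma \ref{fd} forces $R_\p$ to be Gorenstein with $\fld E_R(R/\p)=\height\p$. Hence the lemma reduces to proving that $\height\p=n$ whenever $E_R(R/\p)$ occurs as a summand of $I^n$.

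The next step is to localize the minimal injective resolution $0\to R\to I^\bullet$ at $\p$. Since $R$ is Noetherian, localization preserves injectivity, and the standard identities $E_R(R/\q)_\p=E_{R_\p}(R_\p/\q R_\p)$ when $\q\subseteq\p$ and $E_R(R/\q)_\p=0$ otherwise show that $(I^\bullet)_\p$ is a direct sum of indecomposable injectives over $R_\p$, which makes it the minimal injective resolution of $R_\p$. Under this localization, $E_R(R/\p)\subseteq I^n$ corresponds to a copy of $E_{R_\p}(R_\p/\p R_\p)$ appearing as a summand of $(I^n)_\p$.

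Finally I would invoke the fact that for a Gorenstein local ring $R_\p$ of dimension $d=\height\p$, the Bass numbers satisfy $\mu_i(\p R_\p,R_\p)=\delta_{i,d}$, so that the residue injective $E_{R_\p}(R_\p/\p R_\p)$ appears as a summand of the $i$-th term of the minimal injective resolution precisely when $i=d$. Combining this with the previous step yields $n=d=\height\p$, hence $\fld E_R(R/\p)=n$, as required.

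The only mildly delicate point is the verification that localization of the given minimal injective resolution of $R$ is again minimal as an injective resolution of $R_\p$; this is a classical fact about Noetherian rings, and once it is in hand the argument is essentially a direct application of Bass's structural results together with the previous lemma.
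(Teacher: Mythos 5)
Your argument is correct and follows essentially the same route as the paper: apply Lemma \ref{fd} to identify the flat dimension of $E_R(R/\p)$ with $\dim R_\p$ under the Gorenstein hypothesis, localize the minimal injective resolution at $\p$ (which stays minimal), and use the structure of the minimal injective resolution of the Gorenstein local ring $R_\p$ to force $n=\dim R_\p$. The only difference is that you make explicit the Bass-number fact $\mu_i(\p R_\p,R_\p)=\delta_{i,\dim R_\p}$, which the paper uses implicitly in its final step.
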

\begin{proof}
	Let $I=E_R(R/\p)$ be any indecomposable summand of $I^n$, and let $m$ be its flat dimension. By \ref{fd} we have that $R_\p$ is an $m$-dimensional Gorenstein ring. Also, localizing the resolution we have the minimal injective resolution $0\to R_\p\to I^0_\p\to I^1_\p\to\cdots\to I^n_\p\to\cdots$ with $I_\p=E_{R_\p}(R_\p/\p R_\p)$ a direct summand of $I^n_\p$. We then conclude that $n=\dim R_\p=m$.
\end{proof}

Recall the following conditions on a ring $R$:
\begin{description}
\item[$(S_n)$] $\depth R_\p\geq\min\{n,\height\p\}$ for all $\p\in\Spec R$,
\item[$(G_n)$] $R_\p$ is Gorenstein for all $\p\in\Spec R$ with $\height\p\leq n$.
\end{description}
Note that our notation is different from \cite{RF}.
The following result characterizes commutative $n$-Gorenstein rings.
\begin{Prop}[\cite{Isch,RF}]
For a commutative Noetherian ring $R$ and $n\geq1$, the following are equivalent.
\begin{enumerate}
\renewcommand\labelenumi{(\roman{enumi})}
\renewcommand\theenumi{\roman{enumi}}
\item\label{NG} $R$ is $n$-Goresntein, that is, satisfies the $(l,l)$-condition for each $1\leq l\leq n$.
\item\label{LN} $R$ satisfies the $(l,n)$-condition for some $l\geq1$.
\item\label{SG} $R$ satisfies the $(G_{n-1})$- and $(S_n)$-conditions.
\end{enumerate}
\end{Prop}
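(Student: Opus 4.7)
The plan is to prove (i)$\Rightarrow$(ii) trivially by taking $l=n$; (ii)$\Rightarrow$(i) by a flat-dimension rigidity argument using Lemma \ref{fd=n}; and (i)$\Leftrightarrow$(iii) by a prime-by-prime analysis of the minimal injective resolution via Lemmas \ref{fd} and \ref{fd=n}.

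For (ii)$\Rightarrow$(i), I would assume $\fld I^i<l<\infty$ for all $0\le i<n$. Lemma \ref{fd=n} then forces every indecomposable summand of $I^i$ to have flat dimension exactly $i$, so $\fld I^i\le i$, which is the $(l,l)$-condition for each $1\le l\le n$.

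The heart of the argument is (i)$\Leftrightarrow$(iii). I would invoke the Matlis decomposition $I^i\cong\bigoplus_\p E_R(R/\p)^{\mu_i(\p)}$ together with the classical Bass facts $\mu_i(\p)=0$ for $i<\depth R_\p$ and $\mu_{\depth R_\p}(\p)\ne0$. Combined with Lemmas \ref{fd} and \ref{fd=n}, assumption (i) then yields the key observation: whenever $\mu_i(\p)\ne0$ for some $0\le i<n$, the ring $R_\p$ is Gorenstein and $\height\p=\depth R_\p=i$. From this, $(G_{n-1})$ follows by applying the observation with $i=\depth R_\p$ to any prime of height $\le n-1$, and $(S_n)$ follows because if $\depth R_\p=j<n$ the observation at $i=j$ forces $\depth R_\p=\height\p$, while if $\depth R_\p\ge n$ the inequality $\depth R_\p\ge\min\{n,\height\p\}$ is automatic.

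Conversely, given (iii), suppose $\mu_i(\p)\ne0$ with $i<n$. Then $\depth R_\p\le i<n$, so $(S_n)$ gives $\min\{n,\height\p\}\le i$ and hence $\height\p\le i\le n-1$; then $(G_{n-1})$ makes $R_\p$ Gorenstein, and Lemma \ref{fd} yields $\fld E_R(R/\p)=\height\p\le i$, whence $\fld I^i\le i$, establishing (i). The main obstacle is purely a bookkeeping one: coordinating Bass's non-vanishing with the universal inequality $\depth R_\p\le\height\p$ and with the conclusion of Lemma \ref{fd=n}. Once these are aligned, Lemmas \ref{fd} and \ref{fd=n} carry the rest of the argument.
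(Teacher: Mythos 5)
Your proposal is correct and follows essentially the same route as the paper: both directions hinge on the Matlis/Bass-number analysis of the minimal injective resolution together with Lemmas \ref{fd} and \ref{fd=n}, and your (ii)$\Rightarrow$(i) and (iii)$\Rightarrow$(i) arguments are the paper's almost verbatim. The one noteworthy difference is in (i)$\Rightarrow$(iii): you derive $(G_{n-1})$ and $(S_n)$ from the elementary non-vanishing $\mu_{\depth R_\p}(\p)\neq0$ combined with $\depth R_\p\leq\height\p$, whereas the paper instead invokes $\Ext^{\height\p}_{R_\p}(R_\p/\p R_\p,R_\p)\neq0$ for primes of height $\leq n-1$ (non-vanishing of the top Bass number), so your bookkeeping is, if anything, slightly more self-contained.
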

\begin{proof}
	We include the proof for the convenience of the reader. We denote by $0\to R\to I^0\to I^1\to \cdots$ the minimal injective resolution of $R$.
	
	If $R$ is $n$-Gorenstein, then it satisfies in particular the $(n,n)$-condition, thus we have (\ref{NG})$\Rightarrow$(\ref{LN}). Conversely, if $\fld I^i<\infty$ for $0\leq i<n$, then $\fld I^i=i$ by \ref{fd=n}, so we have (\ref{LN})$\Rightarrow$(\ref{NG}).
	
	We next prove (\ref{NG})$\Rightarrow$(\ref{SG}). We first prove the $(G_{n-1})$-condition. Let $\p\in\Spec R$ be of height $m\leq n-1$. Note that by $\Ext^m_{R_\p}(R_\p/\p R_\p,R_\p)\neq0$, we have $\p\in\Ass I^m$. Then since $\fld I^m$ is finite, we deduce by \ref{fd} that $R_\p$ is Gorenstein.
	We now turn to the $(S_n)$-condition. By the $(G_{n-1})$-condition, it is enough to show that if $\height\p\geq n$ then $\depth R_\p\geq n$. The $(G_{n-1})$-condition also implies that $I^i=\bigoplus_{\height\p=i}E_R(R/\p)$ for all $0\leq i\leq n-1$. This shows any height $\geq n$ prime is associated to $I^j$ with $j\geq n$, which yields $\depth R_\p\geq n$.
		
	
	Finally, we show (\ref{SG})$\Rightarrow$(\ref{NG}). Recall that for $\p\in\Spec R$, the number of summands $E_R(R/\p)$ appearing in $I^i$ is given by $\dim_{k(\p)}\Ext^i_{R_\p}(k(\p),R_\p)$. We see by the $(S_n)$-condition that if $\Ass(I^0\oplus\cdots\oplus I^{n-1})$ consists of prime ideals of height $\leq n-1$. Also, the $(G_{n-1})$-condition shows $I^i=\bigoplus_{\height\p=i}E_R(R/\p)$ for every $0\leq i\leq n-1$. Then \ref{fd} implies that $R$ is $n$-Gorenstein.
\end{proof}

As before, we end this section with the following typical example. We say that $R$ is {\it generically Gorenstein} if $R_\p$ is Gorenstein for all $\p\in\Ass R$. Also, the {\it $(R_n)$-condition} requires $R_\p$ to be regular for all $\p\in\Spec R$ with $\height\p\leq n$.
\begin{Ex}\label{2-Gor}
Consider a commutative Noetherian ring.
\begin{enumerate}
\item Any domain is $1$-Gorenstein. In fact, we have the following implications:
\[ \text{ domain }\Longrightarrow\text{ generically Gorenstein }\Longleftrightarrow 1\text{-Gorenstein}. \] 
\item Any normal domain is $2$-Gorenstein. In fact, we have the following implications:
\[ \text{ normal } \Longleftrightarrow (R_1)+(S_2)\Longrightarrow (G_1)+(S_2)\Longleftrightarrow 2\text{-Gorenstein}. \]
\end{enumerate}
\end{Ex}
\section{The category of reflexive modules}
\subsection{Reflexive modules over Noetherian rings with the two-sided $(2,2)$-condition}
It is well-known (e.g.\! \cite{IR,IW14}) that reflexive modules over normal domains have a nice behavior. We observe in this section that the theory works equally well in the setting of Noetherian rings satisfying the two-sided $(2,2)$-condition. Let us review some basic notions.
\begin{Def}
Let $\La$ be a Noetherian ring, $X\in\mod\La$, and $n\geq1$ an integer.
\begin{enumerate}
\item $X$ is an {\it $n$-th syzygy} if there exists an exact sequence $0\to X\to Q^0\to\cdots \to Q^{n-1}$ with $Q^i\in\proj\La$.
\item $X$ is {\it $n$-torsion free} if one has $\Ext^i_\La(\Tr X,\La)=0$ for all $1\leq i\leq n$.
\end{enumerate}
We use the terms {\it torsion-free} for $1$-torsion freeness, and {\it reflexive} for $2$-torsion freeness.
\end{Def}
We denote by $(-)^\ast$ the functors $\Hom_\La(-,\La)\colon\mod\La\leftrightarrow\mod\La^\op$.
The Auslander-Bridger sequence \cite{ABr} below justifies the terminologies ``torsion-free'' and ``reflexive''.
\[ \xymatrix{ 0\ar[r]&\Ext^1_{\La^\op}(\Tr X,\La)\ar[r]&X\ar[r]& X^{\ast\ast}\ar[r]&\Ext^2_{\La^\op}(\Tr X,\La)\ar[r]&0 } \]

It is easy to see that any $n$-torsion free module is an $n$-th syzygy. The converse is a deep problem which led to the study of Auslander-Gorenstein rings \cite{AR94,AR96}. For a positive integer $n$ we denote by $\Om^n$ the category of $n$-th syzygies.
\begin{Prop}[{\cite[1.7]{AR96}}]
Let $\La$ be a Noetherian ring.
\begin{enumerate}
\item $\Om^2=\refl\La$ if and only if $\grade\Ext^2_\La(X,\La)\geq1$ for all $X\in\mod\La$.
\item $\Om^1\subset\mod\La$ is extension-closed if and only if $\sgrade\Ext^2_\La(X,\La)\geq1$ for all $X\in\mod\La$.
\end{enumerate}
\end{Prop}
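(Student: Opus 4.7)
The plan for both parts is to dualize twice the defining short exact sequences and read off the result via the Auslander--Bridger identification $\coker(Y \to Y^{\ast\ast}) = \Ext^2_{\La^\op}(\Tr Y, \La)$, using throughout that first syzygies are torsion-free (so $M \hookrightarrow P$ for $P$ projective implies $M \hookrightarrow M^{\ast\ast}$ via the natural factorization through $M^{\ast\ast} \to P^{\ast\ast} = P$).

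For (1), the inclusion $\refl\La \subseteq \Om^2$ is immediate: a reflexive $Y$ equals $Y^{\ast\ast}$, which embeds as the kernel of $P^{0\ast} \to P^{1\ast}$ for any projective presentation $P^1 \to P^0 \to Y^\ast \to 0$ of $Y^\ast \in \mod\La^\op$. For the converse, let $Y \in \Om^2$ with $0 \to Y \to P^0 \to P^1$ exact, set $W = \image(P^0 \to P^1)$ and $X = \coker(P^0 \to P^1)$. Dualizing the splicing of $0 \to Y \to P^0 \to W \to 0$ and $0 \to W \to P^1 \to X \to 0$ gives the $4$-term exact sequence
\[ 0 \to W^\ast \to P^{0\ast} \to Y^\ast \to \Ext^2_\La(X,\La) \to 0. \]
Splitting at its image $A \subseteq Y^\ast$ and dualizing a second time, I would use that $W$ is a first syzygy (hence torsion-free, so $W \hookrightarrow W^{\ast\ast}$) to identify $A^\ast = \ker(P^0 \to W^{\ast\ast}) = Y$ as submodules of $P^0$. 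The dual of $0 \to A \to Y^\ast \to \Ext^2_\La(X,\La) \to 0$ then yields an exact sequence
\[ 0 \to \Hom_{\La^\op}(\Ext^2_\La(X,\La), \La) \to Y^{\ast\ast} \to Y \to \Ext^1_{\La^\op}(\Ext^2_\La(X,\La), \La) \to \cdots, \]
and a naturality check, tracing the evaluations inside $P^0$, shows that the constructed map $Y^{\ast\ast} \to Y$ is a retraction of the evaluation $Y \to Y^{\ast\ast}$. Consequently $\coker(Y \to Y^{\ast\ast}) \cong \Hom_{\La^\op}(\Ext^2_\La(X,\La), \La)$, which vanishes precisely when $\grade \Ext^2_\La(X, \La) \geq 1$. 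Since $X = \coker(P^0 \to P^1)$ can be any object of $\mod\La$, both directions follow.

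For (2), $\Om^1$ is exactly the class of torsion-free modules. Given $0 \to Y' \to Y \to Y'' \to 0$ with $Y', Y'' \in \Om^1$, dualize once to get $0 \to (Y'')^\ast \to Y^\ast \to (Y')^\ast \to E' \to 0$, where $E' \subseteq \Ext^1_\La(Y'', \La)$ is the image. Splitting and dualizing again produces a natural map $(Y')^{\ast\ast} \to Y^{\ast\ast}$ with kernel $\Hom_{\La^\op}(E', \La)$, and a diagram chase using the torsion-freeness of $Y'$ and $Y''$ shows that $Y$ is torsion-free if and only if the image of $Y'$ in $(Y')^{\ast\ast}$ meets this kernel trivially. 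For sufficiency, writing $Y'' = \Om X$ so that $\Ext^1_\La(Y'', \La) = \Ext^2_\La(X, \La)$, the strong grade hypothesis forces $\Hom_{\La^\op}(E', \La) = 0$ for every submodule $E'$, hence $Y$ is torsion-free. For necessity, I would specialize to $Y' = \La$: then $Y' \to (Y')^{\ast\ast}$ is an isomorphism so the trivial-intersection condition becomes $\Hom_{\La^\op}(E', \La) = 0$, and here $E' = \La\xi$ is the cyclic right submodule generated by the extension class $\xi \in \Ext^1_\La(Y'', \La)$. Thus ext-closure forces $\Hom_{\La^\op}(\La\xi, \La) = 0$ for every $\xi$, and since every finitely generated submodule of $\Ext^1_\La(Y'', \La)$ is a sum of cyclics, this upgrades to $\sgrade \Ext^1_\La(Y'', \La) \geq 1$, i.e., the stated condition.

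The main technical obstacle is the naturality check in (1), verifying that the constructed map $Y^{\ast\ast} \to Y$ is genuinely a retraction of the evaluation and not merely a map with the correct kernel. This requires unwinding the identification $A^\ast \cong Y$ inside $P^0$ and showing that the restriction of $\operatorname{ev}_y \in Y^{\ast\ast}$ to the subobject $A \subseteq Y^\ast$ corresponds, under the canonical isomorphism $P^{0\ast\ast} \cong P^0$, to the original element $y$. Once this is in hand, the rest of both arguments is a direct assembly of short exact sequences and applications of the definitions of grade and strong grade.
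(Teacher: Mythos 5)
Your argument is correct; note that the paper itself offers no proof of this proposition (it is quoted from \cite[1.7]{AR96}), so there is no internal argument to compare against. Judged on its own, the proposal holds up: in (1) the inclusion $\refl\La\subseteq\Om^2$ is indeed automatic, the four-term sequence $0\to W^\ast\to P^{0\ast}\to Y^\ast\to\Ext^2_\La(X,\La)\to0$ is right (using $\Ext^1_\La(W,\La)\cong\Ext^2_\La(X,\La)$ by dimension shift along $0\to W\to P^1\to X\to0$), and the naturality check you flag does go through: under the identification $A^\ast=\ker(P^0\to W^{\ast\ast})=Y$ (valid because $W$, as a submodule of $P^1$, is torsion-free), the functional obtained by restricting $\mathrm{ev}_y$ to $A\subseteq Y^\ast$ sends $\phi|_Y\mapsto\phi(y)$, which is precisely the element of $A^\ast$ corresponding to $y$; hence the induced map $Y^{\ast\ast}\to Y$ retracts the evaluation, $\coker(Y\to Y^{\ast\ast})\cong\Hom_{\La^\op}(\Ext^2_\La(X,\La),\La)$, and both directions follow since every $X\in\mod\La$ occurs as $\coker(P^0\to P^1)$ with $Y=\ker(P^0\to P^1)\in\Om^2$. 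In (2), the identification $\ker\bigl((Y')^{\ast\ast}\to Y^{\ast\ast}\bigr)\cong\Hom_{\La^\op}(E',\La)$ and the resulting torsion-freeness criterion are correct; sufficiency follows since $Y''\in\Om^1$ gives $\Ext^1_\La(Y'',\La)\cong\Ext^2_\La(X,\La)$ for $X=\coker(Y''\hookrightarrow Q)$, and for necessity the specialization $Y'=\La$, where $E'=\Im(\delta)=\xi\La$ is the cyclic right submodule generated by the extension class, is exactly the right test extension; the upgrade from cyclic to arbitrary (finitely generated) submodules by restriction of homomorphisms is also fine, and passing from $\Ext^1_\La(\Om X,\La)$ back to $\Ext^2_\La(X,\La)$ covers all $X$. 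Compared with the transpose-based treatment in \cite{AR96} (and the Auslander--Bridger sequence used elsewhere in this paper), your route replaces $\Tr$ by direct double dualization of the defining exact sequences; this is essentially equivalent in content but self-contained and somewhat more elementary, at the cost of the explicit evaluation-tracing you identified as the main technical point.
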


We note an easy observation which we use later.
\begin{Lem}\label{easy}
Let $n\geq0$ and suppose that $\grade E\geq n$. Then $\Ext^i_\La(E,M)=0$ for all $n$-th syzygy $M$ and $0\leq i<n$.
\end{Lem}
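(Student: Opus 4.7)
The plan is to proceed by induction on $n$, using a standard dimension-shifting argument. The case $n=0$ is vacuous, so assume $n\geq1$ and that the statement holds for $n-1$. Given $M$ an $n$-th syzygy with an exact sequence
\[ 0\to M\to Q^0\to Q^1\to\cdots\to Q^{n-1} \]
where each $Q^i\in\proj\La$, set $M'=\Im(Q^0\to Q^1)$. Then the short exact sequence $0\to M\to Q^0\to M'\to0$ splits off from the resolution, and the remaining truncation $0\to M'\to Q^1\to\cdots\to Q^{n-1}$ exhibits $M'$ as an $(n-1)$-th syzygy.

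The key observation is that the assumption $\grade E\geq n$ immediately forces $\Ext^i_\La(E,Q^0)=0$ for all $0\leq i<n$, since $Q^0$ is a summand of some $\La^k$ and $\Ext^i_\La(E,\La)=0$ in that range by definition of grade. Meanwhile, the induction hypothesis applied to $M'$ (which uses $\grade E\geq n-1$) yields $\Ext^j_\La(E,M')=0$ for $0\leq j<n-1$.

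Now apply $\Hom_\La(E,-)$ to $0\to M\to Q^0\to M'\to0$, obtaining the long exact sequence
\[ \cdots\to\Ext^{i-1}_\La(E,M')\to\Ext^i_\La(E,M)\to\Ext^i_\La(E,Q^0)\to\cdots. \]
For $1\leq i<n$, the right-hand term vanishes by the grade assumption, and the left-hand term vanishes since $i-1<n-1$ places it in the range covered by the inductive hypothesis; hence $\Ext^i_\La(E,M)=0$. For $i=0$, the sequence begins $0\to\Hom_\La(E,M)\to\Hom_\La(E,Q^0)$, and since $n\geq1$ we have $\Hom_\La(E,Q^0)=0$, so $\Hom_\La(E,M)=0$ as well.

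There is no real obstacle here; the argument is a routine dimension shift, and the lemma is appropriately labelled \emph{easy}. The only subtle point to state cleanly is the simultaneous handling of the $i=0$ case (via injectivity of $\Hom_\La(E,-)$) and the $i\geq1$ cases (via the long exact sequence), but both follow the same pattern.
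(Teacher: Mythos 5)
Your proof is correct and follows essentially the same route as the paper: the paper performs the identical dimension shift along the coresolution $0\to M\to Q^0\to\cdots\to Q^{n-1}$, just phrased as a chain of isomorphisms $\Ext^i_\La(E,M)\simeq\Ext^{i-1}_\La(E,M_1)\simeq\cdots\simeq\Hom_\La(E,M_i)\hookrightarrow\Hom_\La(E,Q_i)=0$ rather than as an induction on $n$. Your handling of the $i=0$ case and the vanishing $\Ext^i_\La(E,Q^0)=0$ from $\grade E\geq n$ matches the paper's argument.
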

\begin{proof}
	Pick an exact sequence $0\to M\to Q_0\to Q_1\to\cdots\to Q_{n-1}$ with $Q_i\in\proj\La$ and put $M_i:=\Im(Q_{i-1}\to Q_i)$. Then applying $\Hom_\La(E,-)$ we have isomorphisms $\Ext^{i}_\La(E,M)\ysimeq\Ext^{i-1}_\La(E,M_1)\ysimeq\cdots\ysimeq\Ext^1_\La(E,M_{i-1})\ysimeq\Hom_\La(E,M_i)$ by $\grade E\geq i$, and $\Hom_\La(E,M_i)\hookrightarrow\Hom_\La(E,Q_i)=0$.
\end{proof}

\begin{Rem}
Suppose that $\sgrade\Ext^2_{\La^\op}(X,\La)\geq1$ for all $X\in\mod\La^\op$. Then $\grade\Ext^1_{\La}(M,\La)\geq1$ for all $M\in\mod\La$.
\end{Rem}
\begin{proof}
	Let $0\to N\to P\to M\to 0$ be an exact sequence in $\mod\La$ with $P\in\proj\La$. It gives an exact sequence $0\to M^\ast\to P^\ast\to N^\ast\to \Ext^1_\La(M,\La)\to 0$. We want to show that the map $N^\aast\to P^\aast$ is injective. Let $K$ be the kernel and consider the commutative diagram
	\[ \xymatrix{
		&0\ar[r]&N\ar[r]\ar[d]&P\ar[r]\ar@{=}[d]&M\\
		0\ar[r]&K\ar[r]&N^\aast\ar[r]&P^\aast.} \]
	Since $K\cap N=\Ker(N\to P)=0$, we see that $K$ maps injectively to $\Coker(N\to N^\aast)=\Ext^2_{\La^\op}(\Tr N,\La)$. Now, $\Ext^2_{\La^\op}(\Tr N,\La)$ has $\sgrade\geq1$ by assumption, so $\grade K\geq1$. Since $K$ is torsion-free, this forces $K=0$, as desired. 
\end{proof}
\subsection{Exact categories}
Let us start by recalling the notion of Quillen's {\it exact categories}. We call a sequence $L\xrightarrow{f}N\xrightarrow{g}M$ in an additive category a {\it kernel-cokernel pair} if $f=\ker g$ and $g=\coker f$.
\begin{Def}\label{exdef}
An {\it exact structure} on an additive category $\E$ is a class $\S$ of kernel-cokernel pairs in $\E$ satisfying the following conditions, where we call a kernel-cokernel pair $L\xrightarrow{f}N\xrightarrow{g}M$ in $\S$ a {\it conflation}, and we say $f$ is an {\it admissible monomorphism} or an {\it inflation}, and $g$ is an {\it admissible epimorphism} or a {\it deflation}.
\begin{enumerate}
\renewcommand\labelenumi{(\roman{enumi})}
\renewcommand\theenumi{\roman{enumi}}
\item\label{EX1} $\S$ is closed under isomorphisms, and the trivial sequences $L\xrightarrow{1}L\to0$ and $0\to L\xrightarrow{1}L$ are in $\S$.
\item\label{EX2} Admissible monomophisms (resp. admissible epimorphims) are closed under compositions.
\item\label{EX3} For any conflation $L\xrightarrow{f}N\xrightarrow{g}M$ and any morphism $a\colon L\to L^\prime$ (resp. $c\colon M^\prime\to M$), there exists a push-out of $f$ along $a$ (resp. a pull-back of $g$ along $c$), and the obtained sequences below are conflations.
\[ 
\xymatrix{
	L\ar[r]^-f\ar[d]_-a&N\ar[r]^-g\ar[d]&M\ar@{=}[d]\\
	L^\prime\ar[r]&N^\prime\ar[r]&M}
\qquad
\xymatrix{
	L\ar[r]\ar@{=}[d]&N^\pprime\ar[r]\ar[d]&M^\prime\ar[d]^-c\\
	L\ar[r]^-f&N\ar[r]^-g&M }
\]
\end{enumerate}
An {\it exact category} is an additive category endowed with an exact structure.
\end{Def}

Let us also recall the following notion which will simplify our statements.
\begin{Def}
A {\it quasi-abelian category} is an additive category satisfying the following.
\begin{enumerate}
\renewcommand\labelenumi{(\roman{enumi})}
\renewcommand\theenumi{\roman{enumi}}
\item Any morphism has a kernel and a cokernel.
\item Kernels are closed under push-outs and cokernels are closed under pull-backs.
\end{enumerate}
\end{Def}

It is related to exact categories as follows.
\begin{Prop}[{See \cite[4.4]{Bu}}]
Let $\E$ be a quasi-abelian category. Then the class of all kernel-cokernel pairs forms an exact structure on $\E$.
\end{Prop}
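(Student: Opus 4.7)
The plan is to verify the three axioms in Definition \ref{exdef} for the class $\S$ of all kernel-cokernel pairs in $\E$. Axiom (EX1) is immediate: the trivial sequences $L\xrightarrow{1}L\to 0$ and $0\to L\xrightarrow{1}L$ are plainly kernel-cokernel pairs, and being such a pair is preserved under replacement by an isomorphic sequence. At the outset I also note that the opposite $\E^{\op}$ of a quasi-abelian category is again quasi-abelian (the two stability conditions swap), so throughout it suffices to treat the inflation/push-out side and invoke duality for the deflation/pull-back side.

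For axiom (EX3), consider a conflation $L\xrightarrow{f}N\xrightarrow{g}M$ and a morphism $a\colon L\to L'$. Push-outs exist in $\E$ because it has finite colimits; form the push-out square with maps $f'\colon L'\to N'$ and $b\colon N\to N'$. The quasi-abelian assumption that kernels are stable under push-outs yields that $f'$ is again a kernel. To identify its cokernel with $M$, observe that the pair $(g,\,0\colon L'\to M)$ is compatible over the push-out square, hence induces a unique $g'\colon N'\to M$ with $g'b=g$ and $g'f'=0$. Since $g=\coker f$ and a push-out of a cokernel is a cokernel (colimits commute with colimits), a short diagram chase identifies $g'=\coker f'$, giving the desired conflation.

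For axiom (EX2), the main obstacle is closure of inflations under composition. Given inflations $f_1\colon L\to N_1$ and $f_2\colon N_1\to N_2$, let $g_1=\coker f_1$ and form the push-out square
\[
\xymatrix{
N_1\ar[r]^{g_1}\ar[d]_{f_2}&M_1\ar[d]^{\bar f_2}\\
N_2\ar[r]^{\bar g_1}&P.
}
\]
Since $g_1=\coker f_1$ and push-outs preserve cokernels, one has $\bar g_1=\coker(f_2 f_1)$; and since $f_2$ is a kernel, the quasi-abelian stability gives that $\bar f_2$ is also a kernel. The remaining and most delicate step is to verify $f_2 f_1=\ker\bar g_1$. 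For this, take $q\colon P\to Q$ to be the cokernel of $\bar f_2$, use the push-out identity $\bar f_2 g_1=\bar g_1 f_2$ together with $\bar f_2=\ker q$, and chase the diagram to show that any morphism $h\colon X\to N_2$ with $\bar g_1 h=0$ factors uniquely through $f_2 f_1$. This is the standard argument that the composition of two strict monomorphisms in a quasi-abelian category is again a strict monomorphism; the dual argument (using pull-back and the pull-back stability of cokernels) yields closure under composition of deflations and completes the verification of (EX2).
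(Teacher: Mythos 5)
Your verification is correct, and it is essentially the argument of the reference the paper cites for this statement (the paper gives no proof of its own, pointing to B\"uhler's Proposition 4.4): push-out/pull-back stability of kernels/cokernels, invariance of cokernels under push-out to identify $\bar g_1=\coker(f_2f_1)$, and the diagram chase for composition of inflations are exactly the standard steps. The only step you leave tacit is that a morphism which is a kernel of \emph{some} morphism is the kernel of its own cokernel, which is what upgrades ``$f'$ is a kernel and $g'=\coker f'$'' to a kernel-cokernel pair; this is a one-line standard lemma, so there is no gap.
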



\subsection{The maximum exact structure}
The main result of this section is the following characterization of when $\refl\La$ is quasi-abelian in terms of Auslander-type conditions.
\begin{Thm}\label{max}
Let $\La$ be a Noetherian ring.
The category $\refl\La$ is quasi-abelian if and only if $\La$ satisfies the two-sided $(2,2)$-condition.
\end{Thm}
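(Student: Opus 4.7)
The plan has two directions, tied by the interplay between the double-dual $(-)^{\aast}$ and the categorical structure of $\refl\La$.

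For the ``if'' direction, assume $\La$ satisfies the two-sided $(2,2)$-condition. Since $\sgrade\geq 2$ implies both $\grade\geq 1$ and $\sgrade\geq 1$, the preceding proposition (on $\Om^2=\refl\La$ and extension-closure of $\Om^1$) applies on both sides. I first observe that $\refl\La$ is in fact always closed under kernels in $\mod\La$: for $f\colon M\to N$ in $\refl\La$ with $K=\Ker f$, the map $\eta_K\colon K\to K^{\aast}$ is injective because $K$ is torsion-free as a submodule of $M$, and naturality of $\eta$ together with the reflexivity of $M$ and $N$ forces the induced $K^{\aast}\to M$ to land in $K$ and to admit $\eta_K$ as a section, exhibiting $K$ as a direct summand of the always-reflexive $K^{\aast}$. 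A parallel diagram-chasing argument, combining $\sgrade\Ext^2_{\La^\op}(\Tr B,\La)\geq 2$ (from the $(2,2)$-condition) with extension-closure of $\Om^1$, shows that $\refl\La$ is also closed under extensions in $\mod\La$. I then establish that $(-)^{\aast}$ is a left adjoint to the inclusion via the standard reduction that $\Hom(X^{\aast},Y)\to\Hom(X,Y)$ is bijective for all reflexive $Y$ iff $\Hom(K_X,Y)=\Hom(C_X,Y)=\Ext^1(C_X,Y)=0$, where $K_X,C_X$ are the kernel and cokernel of $\eta_X$, identified by Auslander-Bridger with $\Ext^{1,2}_{\La^\op}(\Tr X,\La)$; these vanishings follow from $\grade K_X\geq 1$ (via the preceding remark applied to the $(2,1)$-condition on $\La^\op$, implied by two-sided $(2,2)$) and $\sgrade C_X\geq 2$, combined with Lemma~\ref{easy} applied to the reflexive $Y$ viewed as a second syzygy. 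Putting it together: kernels are computed ambient, cokernels are $(-)^{\aast}$ of ambient cokernels, pushouts of conflations stay in $\refl\La$ by extension-closure, and pullbacks exist by kernel-closure; the quasi-abelian axioms follow.

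For the ``only if'' direction, assume $\refl\La$ is quasi-abelian. The existence of cokernels supplies a left adjoint $L\colon\mod\La\to\refl\La$ to the inclusion by $L(X):=\Coker_{\refl\La}(P_1\to P_0)$ for any projective presentation $P_1\to P_0\to X\to 0$. The adjunction forces $L(X)^\ast=\Hom(X,\La)=X^\ast$ and hence $L(X)=L(X)^{\aast}=X^{\aast}$ canonically, with unit $\eta_X$; reversing the equivalence recalled above yields $\Hom(K_X,Y)=\Hom(C_X,Y)=\Ext^1(C_X,Y)=0$ for all reflexive $Y$, so testing at $Y=\La$ gives $\grade K_X\geq 1$ and $\grade C_X\geq 2$. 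The \emph{main obstacle} is to upgrade $\grade C_X\geq 2$ to $\sgrade C_X\geq 2$, which is the full $(2,2)$-condition; my plan is to exploit pullback stability of cokernels in the quasi-abelian $\refl\La$ to realize any submodule $E'\subseteq C_X=\Ext^2_{\La^\op}(\Tr X,\La)$ as (or inside) $C_{X'}$ for an auxiliary $X'\in\mod\La$ built from the preimage $\tilde E'\subseteq X^{\aast}$ via pullback of $X\to X^{\aast}$ along $\tilde E'\hookrightarrow X^{\aast}$, so that $\grade E'\geq 2$ by the earlier reasoning. Finally, the $(2,2)^\op$-statement follows by left-right symmetry: the duality $(-)^\ast\colon\refl\La\simeq(\refl\La^\op)^{\mathrm{op}}$ transports the quasi-abelian structure, so $\refl\La^\op$ is also quasi-abelian and the same argument applies to $\La^\op$.
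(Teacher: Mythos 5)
Your outline reproduces the paper's skeleton (kernels computed in $\mod\La$, cokernels via the reflexive hull $(-)^{\aast}$, two-sidedness by the duality $(-)^\ast$), but it omits precisely the content that makes the ``if'' direction a theorem: the verification that kernels are stable under push-outs and cokernels under pull-backs. Your one-line justification is not correct. By \ref{adm}(\ref{mono}) an admissible monomorphism in $\refl\La$ has ambient cokernel $X$ that is merely torsion-free, not reflexive, so the push-out in $\mod\La$ is an extension of a possibly non-reflexive module by a reflexive one; extension-closure does \emph{not} keep it in $\refl\La$. One must pass to the reflexive hull $Y^{\aast}$ and then prove that $\Coker(N\to Y^{\aast})$ is again torsion-free, which in the paper is a genuine computation (the sequences (\ref{eqXZ})--(\ref{eqcl})) using the full strength of the two-sided $(2,2)$-condition, e.g.\ $\sgrade\Ext^1_\La(X,\La)\geq2$ because $X$ is a syzygy, and $\grade\Ext^2_{\La^\op}(\Tr Y,\La)\geq2$. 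Likewise ``pull-backs exist'' is not the axiom: you must show the pulled-back deflation again has cokernel of $\sgrade\geq2$, which the paper gets from the Serre property of $\{\sgrade\geq2\}$ after identifying the new cokernel as a submodule of the old one. Your side claim that $\refl\La$ is ``always'' closed under kernels (via ``the always-reflexive $K^{\aast}$'') is false for general Noetherian rings: closure under kernels is \emph{equivalent} to $\grade\Ext^2_\La(X,\La)\geq1$ (\ref{EX-ker}), and reflexivity of double duals needs the same kind of grade hypothesis. Under the $(2,2)$-assumption this is harmless, but it signals that the formal package you rely on (kernel-closed $+$ extension-closed $+$ reflective) cannot by itself yield quasi-abelianness; the $\sgrade\geq 2$ hypotheses enter exactly in the stability proofs you skipped.

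In the ``only if'' direction your key idea (use pull-back stability of deflations to control submodules of the cokernel) is the paper's, but the construction as described does not invoke the axiom: $\tilde E'\subseteq X^{\aast}$ is only torsion-free and $X\to X^{\aast}$ is not a morphism of $\refl\La$, so ``pulling back along $\tilde E'\hookrightarrow X^{\aast}$'' is not a pull-back of a conflation in $\refl\La$. The paper's fix is to cover the chosen submodule $Y\subseteq\Coker(\text{deflation})$ by a projective $P\twoheadrightarrow Y$, lift to $P\to N$, pull the conflation back along $P\to N$, and then use \ref{EX-coker} to see that the image $Z\to P$ of the new deflation is a reflexive hull, so dualizing $0\to Z\to P\to Y\to0$ gives $\Hom_\La(Y,\La)=\Ext^1_\La(Y,\La)=0$. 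Also, your asserted equivalence ``$\Hom(X^{\aast},Y)\to\Hom(X,Y)$ bijective for all reflexive $Y$ iff $\Hom(K_X,Y)=\Hom(C_X,Y)=\Ext^1(C_X,Y)=0$'' holds only in the direction you do not use; testing at $Y=\La$ gives $\Hom_\La(C_X,\La)=0$ but not $\Ext^1_\La(C_X,\La)=0$, so your preliminary ``$\grade C_X\geq2$'' is unjustified (it becomes moot once the pull-back argument is done correctly). Finally, you still need that every $\Ext^2_{\La^\op}(A,\La)$ actually occurs as the cokernel of a deflation -- reduce to $A=\Tr X$ with $X$ torsion-free and use the conflation $0\to\Om X\to P\to X^{\aast}$ -- a step the paper carries out explicitly.
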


Let us discuss the existence of kernels and cokernels in the category $\refl\La$. Note that, since we have a duality $(-)^\ast\colon\refl\La\leftrightarrow\refl\La^\op$ for arbitrary Noetherian rings, the conditions in \ref{EX-ker} and in \ref{EX-coker} are opposite to each other, that is, the conditions for $\La$ in \ref{EX-ker} are equivalent to the ones for $\La^\op$ in \ref{EX-coker}.
\begin{Lem}\label{EX-ker}
Let $\La$ be a Noetherian ring. The following are equivalent.
\begin{enumerate}
\renewcommand\labelenumi{(\roman{enumi})}
\renewcommand\theenumi{\roman{enumi}}
\item\label{ex-ker} $\refl\La$ has kernels.
\item\label{cl-ker} $\refl\La\subset\mod\La$ is closed under kernels.
\item\label{ref=Om^2} $\refl\La=\Om^2$.
\item\label{grade1} $\grade\Ext^2_\La(X,\La)\geq1$ for all $X\in\mod\La$.
\end{enumerate}
In this case, the kernel in $\refl\La$ is computed as the kernel in $\mod\La$.
\end{Lem}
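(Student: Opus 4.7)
My plan is to prove the equivalences via the cycle (iii) $\Leftrightarrow$ (iv) $\Rightarrow$ (ii) $\Rightarrow$ (i) $\Rightarrow$ (iii). The equivalence (iii) $\Leftrightarrow$ (iv) is the direct content of the cited Proposition 3.2(1), and (ii) $\Rightarrow$ (i) is automatic for a full subcategory closed under ambient kernels.

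For (iv) $\Rightarrow$ (ii), given $f\colon M \to N$ in $\refl\La$ I set $C := \coker(f^*\colon N^* \to M^*) \in \mod\La^{\op}$. Since $M, N$ are reflexive, dualizing the factorization $N^* \twoheadrightarrow \Im f^* \hookrightarrow M^*$ yields $M \to (\Im f^*)^* \hookrightarrow N$, composing to $f^{**} = f$, and the kernel of $M \to (\Im f^*)^*$ reads off from the long exact sequence as $C^*$; thus $\ker f \cong C^*$ in $\mod\La$. To show $C^*$ is reflexive, I use the triangular identity $\eta_C^* \circ \eta_{C^*} = \mathrm{id}_{C^*}$, which makes $\eta_C^*\colon C^{***} \to C^*$ split epi; dualizing the $\La^\op$-side Auslander-Bridger sequence of $C$ identifies $\ker \eta_C^*$ with $(\coker \eta_C)^* = \Ext^2_\La(\Tr C, \La)^*$. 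Applying (iv) to $X = \Tr C \in \mod\La$ makes this kernel vanish, so $\eta_C^*$ is an isomorphism with inverse $\eta_{C^*}$, showing $C^*$ is reflexive.

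For (i) $\Rightarrow$ (iii), given $Z \in \Om^2$ write $Z = \ker(P_1 \to P_0)$ in $\mod\La$ for projectives $P_0, P_1$, which are reflexive. Assuming (i), let $K_0$ be the kernel of $P_1 \to P_0$ in $\refl\La$. Its universal property yields a natural isomorphism $\Hom_\La(L, K_0) \cong \Hom_\La(L, Z)$ for all reflexive $L$. Specializing to $L = \La$ (which is reflexive, with the natural iso $\Hom_\La(\La, M) \cong M$ functorial in $M$) gives $K_0 \cong Z$ as $\La$-modules, so $Z$ inherits reflexivity from $K_0$. Thus $\Om^2 \subseteq \refl\La$, and combined with the reverse inclusion (from the Auslander-Bridger sequence) we obtain (iii). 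The main obstacle is the (iv) $\Rightarrow$ (ii) step — specifically, the identification $\ker f \cong C^*$ and the triangular-identity-plus-Auslander-Bridger argument deducing reflexivity; the concluding "kernels in $\refl\La$ are computed in $\mod\La$" follows from this.
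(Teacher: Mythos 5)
Your proposal is correct, and much of its skeleton coincides with the paper's proof: both outsource (iii)$\Leftrightarrow$(iv) to the recalled Auslander--Bridger/Auslander--Reiten result, both note (ii)$\Rightarrow$(i) is formal, and your (i)$\Rightarrow$(iii) (take the kernel in $\refl\La$ of a map between projectives, evaluate the universal property at $L=\La$, and combine with the standard inclusion $\refl\La\subseteq\Om^2$) is exactly the paper's ``(i)$\Rightarrow$(ii) by noting $\La\in\refl\La$'' followed by ``(ii)$\Rightarrow$(iii) by looking at kernels between projectives''. The genuine divergence is the leg landing in (ii): the paper treats (iii)$\Rightarrow$(ii) as clear --- for $f\colon M\to N$ between reflexives one has $\ker f\cong(\coker f^\ast)^\ast$, and duals of finitely generated modules always lie in $\Om^2$, so (iii) applies --- whereas you prove (iv)$\Rightarrow$(ii) directly: after the same identification $\ker f\cong C^\ast$ with $C=\coker f^\ast\in\mod\La^\op$, you show $C^\ast$ is reflexive using the triangular identity $(\eta_C)^\ast\circ\eta_{C^\ast}=\mathrm{id}_{C^\ast}$ for the evaluation maps and the dualized Auslander--Bridger sequence, with (iv) applied to $X=\Tr C\in\mod\La$ killing $\Hom_{\La^\op}(\Ext^2_\La(\Tr C,\La),\La)$; the side-bookkeeping here is correct. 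Your routing makes that leg self-contained (it never uses that dual modules are second syzygies, nor passes through (iii)) at the cost of a longer computation, while the paper's routing is leaner because all grade-theoretic content is concentrated in the single cited equivalence (iii)$\Leftrightarrow$(iv). Both are valid proofs of the full cycle, and your concluding remark that the final sentence of the lemma follows from (ii) is also how the paper reads it.
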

\begin{proof}
	The assertions (\ref{ref=Om^2})$\Rightarrow$(\ref{cl-ker})$\Rightarrow$(\ref{ex-ker}) are clear. We see (\ref{ex-ker})$\Rightarrow$(\ref{cl-ker}) by noting $\La\in\refl\La$. Looking at the kernel between maps of projectives, we deduce (\ref{cl-ker})$\Rightarrow$(\ref{ref=Om^2}).
	Finally, (\ref{ref=Om^2})$\Leftrightarrow$(\ref{grade1}) is \cite[2.26]{ABr}, see also \cite[1.6(a)]{AR96}.
\end{proof}

\begin{Lem}\label{EX-coker}
Let $\La$ be a Noetherian ring. The following are equivalent.
\begin{enumerate}
\renewcommand\labelenumi{(\roman{enumi})}
\renewcommand\theenumi{\roman{enumi}}
\item\label{ex-cok} $\refl\La$ has cokernels.
\item\label{aast} $X\to X^\aast$ induces an isomorphism $\Hom_\La(X^\aast,M)\xsimeq\Hom_\La(X,M)$ for all $M\in\refl\La$.
\end{enumerate}
In this case, the cokernel in $\refl\La$ is computed as the double dual of the cokernel in $\mod\La$.
\end{Lem}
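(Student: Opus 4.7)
My plan is to prove the equivalence by showing that condition (ii) asserts exactly that the double dual $(-)^{\ast\ast}$ provides a left adjoint to the inclusion $\refl\La\hookrightarrow\mod\La$, and that the existence of cokernels in $\refl\La$ is equivalent to the existence of such a reflection via the projective-presentation construction. The formula ``$\coker_{\refl\La}(f)=(\coker_{\mod\La}(f))^{\ast\ast}$'' from the final sentence of the lemma will then fall out by combining both directions.

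First, for \textbf{(ii) $\Rightarrow$ (i)}, given $f\colon X\to Y$ in $\refl\La$, I will form $C:=\coker_{\mod\La}(f)$ and argue that the composite $q\colon Y\twoheadrightarrow C\to C^{\ast\ast}$ is the cokernel of $f$ in $\refl\La$. For any $M\in\refl\La$, (ii) applied to $C$ yields
\[
\Hom_\La(C^{\ast\ast},M)\xrightarrow{\sim}\Hom_\La(C,M)=\ker\bigl(\Hom_\La(Y,M)\to\Hom_\La(X,M)\bigr),
\]
which is the required universal property of the cokernel in $\refl\La$, conditional on $C^{\ast\ast}\in\refl\La$.

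Next, for \textbf{(i) $\Rightarrow$ (ii)}, I fix a projective presentation $P_1\xrightarrow{d_1}P_0\to X\to 0$. Since $P_i\in\proj\La\subset\refl\La$, the cokernel $\tilde X:=\coker_{\refl\La}(d_1)$ exists in $\refl\La$ by (i) and comes equipped with a canonical map $X\to\tilde X$ satisfying $\Hom_\La(\tilde X,M)\cong\Hom_\La(X,M)$ for every $M\in\refl\La$. I will then identify $\tilde X$ with $X^{\ast\ast}$, with the canonical map matching $X\to X^{\ast\ast}$: the natural $X\to X^{\ast\ast}$ yields $P_0\to X^{\ast\ast}$ killing $d_1$, which by the universal property of $\tilde X$ induces $\tilde X\to X^{\ast\ast}$, while double-dualizing the presentation produces the inverse, making $\tilde X\cong X^{\ast\ast}$ canonically.

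The \textbf{main obstacle} in both directions is showing that $X^{\ast\ast}\in\refl\La$ under condition (ii). My plan here is to exploit that the natural map $X^{\ast\ast}\to X^{\ast\ast\ast\ast}$ is always a split monomorphism, with retraction dual to $X^\ast\to X^{\ast\ast\ast}$, so $X^{\ast\ast\ast\ast}\cong X^{\ast\ast}\oplus K$ for some complement $K$. Applying (ii) to the module $X^{\ast\ast}$ compares $\Hom_\La(X^{\ast\ast\ast\ast},M)$ with $\Hom_\La(X^{\ast\ast},M)$ and forces $\Hom_\La(K,M)=0$ for all $M\in\refl\La$; in particular $K^\ast=0$. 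Since any $\La$-dual is a first syzygy (embedded into the dual of a projective cover), $X^{\ast\ast\ast\ast}$ is torsion-free, and hence so is its submodule $K$; the torsion-free inclusion $K\hookrightarrow K^{\ast\ast}=0$ then gives $K=0$, so $X^{\ast\ast}$ is reflexive. With this in place, the final cokernel formula follows from the universal-property calculation of the first implication.
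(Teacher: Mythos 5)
Your (ii)$\Rightarrow$(i) half is correct and is essentially the paper's argument: condition (ii) (with $M=\La$, or via your splitting $X^{\ast\ast\ast\ast}\simeq X^{\ast\ast}\oplus K$ and $K^\ast=0$) gives $X^\aast\in\refl\La$, and then $\Hom_\La(C^\aast,M)\simeq\Hom_\La(C,M)=\Ker(\Hom_\La(Y,M)\to\Hom_\La(X,M))$ is the universal property of the cokernel. The gap is in (i)$\Rightarrow$(ii). You build the comparison map $\tilde X\to X^\aast$ by feeding $P_0\to X^\aast$ into the universal property of $\tilde X=\coker_{\refl\La}(d_1)$; but that universal property only applies to targets lying in $\refl\La$, and at this stage you do not know $X^\aast\in\refl\La$. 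Your own resolution of this point (your ``main obstacle'' paragraph) proves $X^\aast\in\refl\La$ \emph{under condition (ii)}, which is exactly the conclusion you are trying to reach, so in this direction the argument is circular. Nor is reflexivity of double duals a formality one may assume: it amounts to a grade condition on modules of the form $\Ext^2(\,\cdot\,,\La)$, which is precisely the kind of statement the paper extracts from (i) via the duality $(-)^\ast\colon\refl\La\leftrightarrow\refl\La^\op$ and Lemma \ref{EX-ker}. The phrase ``double-dualizing the presentation produces the inverse'' is also unjustified as written, since $(-)^\aast$ is not right exact and $P_1^\aast\to P_0^\aast$ does not present $X^\aast$.

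The direction is repairable within your setup, and without the paper's appeal to \ref{EX-ker}: you have already established that $\eta\colon X\to\tilde X$ induces $\Hom_\La(\tilde X,M)\xsimeq\Hom_\La(X,M)$ for all $M\in\refl\La$. Take $M=\La$ to see that $\eta^\ast\colon\tilde X^\ast\to X^\ast$ is an isomorphism, hence so is $\eta^\aast$; the naturality square $\eta^\aast\circ\mathrm{ev}_X=\mathrm{ev}_{\tilde X}\circ\eta$, together with $\mathrm{ev}_{\tilde X}$ being invertible because $\tilde X$ is reflexive, identifies $\eta$ with $\mathrm{ev}_X\colon X\to X^\aast$ (and shows $X^\aast\simeq\tilde X\in\refl\La$ as a consequence, not an assumption); condition (ii) then follows from the Hom-isomorphism you already have. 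For comparison, the paper proves (i)$\Rightarrow$(ii) by checking surjectivity directly and reducing injectivity to $\grade\Ext^2_{\La^\op}(Y,\La)\geq1$ for all $Y\in\mod\La^\op$, obtained from the duality with $\refl\La^\op$ and \ref{EX-ker}; your route, once fixed as above, is a genuine alternative that avoids that input.
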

\begin{proof}
	(\ref{aast})$\Rightarrow$(\ref{ex-cok})  Setting $M=\La$ we see that $X^\ast$ is reflexive. It follows that $X^\aast\in\refl\La$, and we deduce that $\refl\La$ has cokernels, which are computed as the double dual of the cokernel in $\mod\La$.
	
	(\ref{ex-cok})$\Rightarrow$(\ref{aast})  Let $X\in\mod\La$ and $M\in\refl\La$ and consider the map $\Hom_\La(X^\aast,M)\to\Hom_\La(X,M)$. Given a map $X\to M$, applying the duoble dual yields a morphism $X^\aast\to M$ since $M$ is reflexive, which shows surjectivity. It remains to prove injectivity. Since the cokernel of the map $X\to X^\aast$ is $\Ext^2_{\La^\op}(\Tr X,\La)$, this is equivalent to showing $\Hom_\La(\Ext^2_{\La^\op}(\Tr X,\La),M)=0$, that is, $\grade\Ext^2_{\La^\op}(Y,\La)\geq1$ for all $Y\in\mod\La^\op$. Notice that the map $(-)^\ast\colon\refl\La\leftrightarrow\refl\La^\op$ is a duality. Then $\refl\La$ having cokernels means $\refl\La^\op$ has kernels. By \ref{EX-ker}, this shows $\grade\Ext^2_{\La^\op}(Y,\La)\geq1$ for all $Y\in\mod\La^\op$.
\end{proof}

The above justifies the following terminology.
\begin{Def}
Let $\La$ be a Noetherian ring satisfying the condition in \ref{EX-coker}.
We call a morphism $X\to X^\aast$, or the module $X^\aast$, the {\it reflexive hull} of $X$.
\end{Def}

We next discuss kernel-cokernel pairs in $\refl\La$.
\begin{Lem}\label{ses}
Let $\La$ be a Noetherian ring such that $\refl\La$ has kernels and cokernels.
A complex $L\xrightarrow{f} M\xrightarrow{g} N$ in $\refl\La$ is a kernel-cokernel pair if and only if  the following conditions are satisfied.
\begin{enumerate}
\renewcommand\labelenumi{(\roman{enumi})}
\renewcommand\theenumi{\roman{enumi}}
\item $f=\ker g$ in $\mod\La$.
\item $C:=\Coker f\in\mod\La$ is torsion-free and $g$ factors as $M\twoheadrightarrow C\hookrightarrow C^{\ast\ast}\xsimeq N$.
\end{enumerate}
\end{Lem}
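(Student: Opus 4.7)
The plan is to reduce everything to the descriptions of kernels and cokernels in $\refl\La$ provided by Lemmas~\ref{EX-ker} and~\ref{EX-coker}. Concretely, the kernel in $\refl\La$ coincides with the kernel in $\mod\La$, while the cokernel of $f$ in $\refl\La$ is $C^{\ast\ast}$ equipped with the canonical map $M\twoheadrightarrow C\to C^{\ast\ast}$, where $C:=\Coker_{\mod\La}f$.

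For the forward direction, I will suppose $L\xrightarrow{f}M\xrightarrow{g}N$ is a kernel-cokernel pair in $\refl\La$. Condition (i) follows immediately from Lemma~\ref{EX-ker}, since $f=\ker g$ in $\refl\La$ coincides with the kernel in $\mod\La$. By Lemma~\ref{EX-coker} applied to the cokernel side, the universal property produces an isomorphism $\phi\colon C^{\ast\ast}\xsimeq N$ identifying $g$ with the canonical composite $M\twoheadrightarrow C\to C^{\ast\ast}\xrightarrow{\phi}N$. Thus $g$ factors through $C\to C^{\ast\ast}$ via $\phi$, and to obtain (ii) it remains only to verify that this canonical map is injective, i.e.\ that $C$ is torsion-free. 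For this I will invoke (i), which gives $\Ker g=\Im f$; hence the induced map $\bar{g}\colon C=M/\Im f\to N$ is the inclusion of $\Im g$ and is in particular injective. Since $\bar{g}=\phi\circ(C\to C^{\ast\ast})$ with $\phi$ an isomorphism, the map $C\to C^{\ast\ast}$ must be injective.

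The converse is more direct. Assuming (i) and (ii), Lemma~\ref{EX-ker} gives $f=\ker g$ in $\refl\La$. By Lemma~\ref{EX-coker} the cokernel of $f$ in $\refl\La$ is the canonical map $M\twoheadrightarrow C\to C^{\ast\ast}$, and the factorization in (ii) identifies $g$ with this map via the isomorphism $C^{\ast\ast}\xsimeq N$. Hence $g=\coker f$ in $\refl\La$ and the complex is a kernel-cokernel pair.

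The only nontrivial point is establishing the injectivity of $C\to C^{\ast\ast}$ in the forward direction, and this is the main obstacle; it dissolves once one observes that (i) forces the induced map $\bar{g}\colon C\to N$ to be the inclusion of the image of $g$, after which composing with the isomorphism $\phi$ transfers this injectivity to the canonical double-dual map.
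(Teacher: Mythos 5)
Your proof is correct and takes essentially the same route as the paper: both directions are reduced to the computations of kernels and cokernels in $\refl\La$ given by \ref{EX-ker} and \ref{EX-coker}. Your explicit check that $C$ is torsion-free in the forward direction (injectivity of the induced map $\bar{g}\colon C\to N$ from condition (i), transferred to $C\to C^{\ast\ast}$ through the isomorphism $\phi$) merely spells out a point the paper's two-line proof leaves implicit.
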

\begin{proof}
	By \ref{EX-ker}, the first condition is equivalent to $f=\ker g$ in $\refl\La$. By \ref{EX-coker}, the second condition is equivlent to $g=\coker f$.
	%
\end{proof}

\subsubsection{Proof of \ref{max}, if part}
Now we assume that $\La$ is a Noetherian ring satisfying the two-sided $(2,2)$-condition. We shall prove that the category $\refl\La$ is quasi-abelian. We need a preparation on characterizations of admissible monomorphisms and epimorphisms.
\begin{Lem}\label{adm}
Let $\La$ be a Noetherian ring satisfying the two-sided $(2,2)$-condition and $f\colon L\to M$ be a morphism in $\refl \La$.
\begin{enumerate}
\item\label{mono} There is a kernel-cokernel pair $L\xrightarrow{f}M\to N$ in $\refl\La$ if and only if $f$ is a monomorphism in $\mod\La$ with torsion-free cokernel.
\item\label{epi} There is a kernel-cokernel pair $K\to L\xrightarrow{f}M$ in $\refl\La$ if and only if the cokernel has $\sgrade\geq2$.
\end{enumerate}
\end{Lem}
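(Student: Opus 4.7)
The plan is to leverage the description of kernel-cokernel pairs in $\refl\La$ given by Lemma \ref{ses}, together with the characterizations of kernels and cokernels in Lemmas \ref{EX-ker} and \ref{EX-coker}. Under the two-sided $(2,2)$-condition, $\refl\La$ has both kernels (via \ref{EX-ker}) and cokernels (via \ref{EX-coker}), so Lemma \ref{ses} applies throughout. In each part, the forward direction will be read off from \ref{ses}, while the converse will require producing the kernel-cokernel pair explicitly.

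For part (1), the direction ($\Rightarrow$) is immediate: if $L\xrightarrow{f}M\to N$ is a kernel-cokernel pair in $\refl\La$, then $f=\ker(M\to N)$ in $\mod\La$ (so $f$ is mono) and $\Coker f$ is torsion-free, both by Lemma \ref{ses}. For ($\Leftarrow$), given $f\colon L\hookrightarrow M$ with torsion-free cokernel $C$, I will set $N:=C^{\ast\ast}\in\refl\La$ and $g\colon M\twoheadrightarrow C\hookrightarrow C^{\ast\ast}$; torsion-freeness of $C$ makes $C\to C^{\ast\ast}$ injective, so $\ker g=\ker(M\twoheadrightarrow C)=\image f=L$ in $\mod\La$. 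Lemmas \ref{EX-ker} and \ref{EX-coker} then promote this to $f=\ker g$ and $g=\coker f$ inside $\refl\La$, which by Lemma \ref{ses} is exactly a kernel-cokernel pair.

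For part (2), the direction ($\Rightarrow$) factors $f\colon L\to M$ via Lemma \ref{ses} as $L\twoheadrightarrow C\hookrightarrow C^{\ast\ast}\cong M$, where $C=\Coker(K\to L)\in\mod\La$; the Auslander-Bridger sequence then identifies $\Coker f\cong\Coker(C\hookrightarrow C^{\ast\ast})\cong\Ext^2_{\La^\op}(\Tr C,\La)$, which has $\sgrade\geq 2$ by the $(2,2)^\op$-condition. Conversely, given $f$ with $D:=\Coker f$ satisfying $\sgrade D\geq 2$, I will set $K:=\ker f$ in $\mod\La$ (which lies in $\refl\La$ by Lemma \ref{EX-ker}) and $I:=\image f$. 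Dualizing $0\to I\to M\to D\to 0$ and using $D^\ast=0=\Ext^1_\La(D,\La)$, which follow from $\grade D\geq 2$, yields $I^\ast\xsimeq M^\ast$; dualizing once more and invoking $M\in\refl\La$ gives $I^{\ast\ast}\xsimeq M$, and the induced map $L\to I^{\ast\ast}$ is identified with $f$. Hence by Lemmas \ref{EX-ker} and \ref{EX-coker}, $K\to L\xrightarrow{f}M$ is a kernel-cokernel pair in $\refl\La$.

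The main subtlety is that the two-sided $(2,2)$-condition is exactly what is needed on both sides: the $(2,2)$-condition gives $\refl\La=\Om^2$ (so kernels in $\refl\La$ agree with kernels in $\mod\La$, used in (1)$\Leftarrow$ and to place $K$ in $\refl\La$ in (2)$\Leftarrow$), while the $(2,2)^\op$-condition supplies the vanishing $\sgrade\Ext^2_{\La^\op}(-,\La)\geq 2$ that powers (2)$\Rightarrow$. The dualization argument at the heart of (2)$\Leftarrow$ is the only piece that requires computation beyond invoking the preceding lemmas.
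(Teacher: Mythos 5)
Your proof is correct and takes essentially the same route as the paper: both parts are deduced from Lemma \ref{ses} combined with Lemmas \ref{EX-ker} and \ref{EX-coker}, exactly as in the paper's argument. The only (immaterial) difference is in the converse of (2), where the paper verifies directly that $0\to\Hom_\La(M,N)\to\Hom_\La(L,N)\to\Hom_\La(K,N)$ is exact for reflexive $N$ using $\grade\Coker f\geq2$, while you dualize $0\to\Im f\to M\to\Coker f\to 0$ twice to identify $M\simeq(\Im f)^{\ast\ast}$ and then invoke \ref{EX-coker}; the two verifications are equivalent.
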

\begin{proof}
	(1)  Since $\La$ satisfies the $(2,2)$-condition, we have by \ref{EX-ker} that $\refl\La\subset\mod\La$ is closed under kernels. Then the assertion follows immediately from \ref{ses}.
	
	(2)  If there is a kernel-cokernel pair $K\to L\xrightarrow{f}M$, then by \ref{ses}, we must have $\Coker f=\Ext^2_{\La^\op}(\Tr C,\La)$ for $C:=\Im f$. Since $\La^\op$ satisfies the $(2,2)$-condition, this has $\sgrade\geq2$. Conversely, if $\sgrade\Coker f\geq2$, it is easily seen that $0\to\Hom_\La(M,-)\to\Hom_\La(L,-)\to\Hom_\La(K,-)$ is exact on $\refl\La$, that is, $f$ is a cokernel.
\end{proof}
We note some examples of non-admissible monomorphisms or epimorphisms.
\begin{Ex}
Let $\La=R$ be a commutative Noetherian normal domain. Let $x\in R$ be a non-unit. Then $f\colon R\xrightarrow{x}R$ is a non-admissible monomorphism. In fact, its cokernel in $\refl R$ is $(R/(x))^{\ast\ast}=0$, so $\ker(\coker f)=\ker(R\to 0)=1_R\not\simeq f$.
Similarly, the map $f$ is a epimorphism in $\refl R$, but it is not an admissible epimorphism.
\end{Ex}

Now we are ready to prove the `if' part of \ref{max}.
\begin{proof}
	%
	Since $\La$ satisfies the $(2,2)$-condition and the opposite, the condition in \ref{EX-ker}(\ref{grade1}) holds for $\La$ and $\La^\op$, therefore $\refl\La$ has kernels and cokernels.
	
	It remains to show that kernels (resp. cokernels) are closed under pull-backs (resp. push-outs). First consider the assertion for the push-out. Let $L\to M$ be an admissible monomorphism in $\refl\La$. Then by \ref{adm}(\ref{mono}) it extends to an exact sequence $0\to L\to M\to X\to0$ in $\mod\La$ with $X$ torsion-free. Let $L\to N$ be an arbitrary morphism in $\refl\La$ and consider the diagram
	\begin{equation}\label{eq9}
		\xymatrix{
			0\ar[r]&L\ar[r]\ar[d]&M\ar[r]\ar[d]& X\ar[r]\ar@{=}[d]& 0\\
			0\ar[r]&N\ar[r]\ar@{=}[d]&Y\ar[r]\ar[d]&X\ar[r]\ar[d]& 0 \\
			0\ar[r]&N\ar[r]&Y^\aast\ar[r]&Z\ar[r]&0 }
	\end{equation}
	where the upper left square is a push-out in $\mod\La$, and $Y\to Y^\aast$ is the reflexive hull, so that pasting the left squares yields a push-out in $\refl\La$. We have to show that in the exact sequence of the last row, the module $Z\in\mod\La$ is torsion-free. Since $X$ is torsion-free, we see by the middle exact sequence that $Y$ is also torsion-free. Then the lower right square yields an exact sequence
	\begin{equation}\label{eqXZ}
		\xymatrix{ 0\ar[r]&X\ar[r]&Z\ar[r]&\Ext^2_{\La^\op}(\Tr Y,\La)\ar[r]& 0 }.
	\end{equation}
	Applying $\Hom_\La(-,\La)$ yields an isomorphism $Z^\ast\xsimeq X^\ast$ by $\grade\Ext^2_\La(\Tr Y,\La)\geq2$, hence $X^\aast\xsimeq Z^\aast$. Comparing this isomorphism with (\ref{eqXZ}), we get an exact sequence
	\begin{equation}\label{eqinj}
		\xymatrix{ 0\ar[r]& \Ext^1_{\La^\op}(\Tr Z,\La)\ar[r]& \Ext^2_{\La^\op}(\Tr Y,\La)\ar[r]&  \Ext^2_{\La^\op}(\Tr X,\La)}
	\end{equation}
	since $X$ is torsion-free.
	On the other hand, the middle exact sequence in (\ref{eq9}) yields an exact sequence
	\begin{equation}\label{eqcl}
		\xymatrix{ 0\ar[r]& N^\aast\ar[r]& Y^\aast\ar[r]& X^\aast }.
	\end{equation}
	Indeed, dualizing the sequence yields $0\to X^\ast\to Y^\ast\to N^\ast$, whose cokernel $C$ is a submodule of $\Ext^1_\La(X,\La)$. Since $X$ is torsion-free, it is a syzygy: $X=\Om X^\prime$ for some $X^\prime\in\mod\La$. This shows that $\Ext^1_\La(X,\La)=\Ext^2_\La(X^\prime,\La)$ has $\sgrade\geq2$, and therefore $\grade C\geq2$. Then taking the dual once more yields the desired sequence.
	Now, comparing the middle exact sequence of (\ref{eq9}) and its double dual (\ref{eqcl}), we see that the map $\Ext^2_{\La^\op}(\Tr Y,\La)\to \Ext^2_{\La^\op}(\Tr X,\La)$ is injective. We conclude by (\ref{eqinj}) that $\Ext^1_\La(\Tr Z,\La)=0$, that is, $Z$ is torsion-free.
	
	Next we consider the pull-back. Let $f\colon L\to M$ be an admissible epimorphism in $\refl\La$. By \ref{adm}(\ref{epi}) it extends to an exact sequence
	\begin{equation}\label{eq4}
		\xymatrix@R=1.5mm@C=5mm{
			0\ar[rr] &&K\ar[rr]&&L\ar[rr]^-f\ar[dr]&& M\ar[rr]&& Y\ar[rr]&& 0\\
			&& 	&& &X\ar[ur]& && &&}
	\end{equation}
	in $\mod\La$ with $X:=\Im f$ and $\sgrade Y\geq2$. Now let $N\to M$ be an arbitrary morphism in $\refl\La$ and we want to form a pull-back of $f$ along it. Note that the pull-back in the category $\refl\La$ is computed as a pull-back in $\mod\La$ since $\refl\La\subset\mod\La$ is closed under kernels by \ref{EX-ker}. Let $Z$ be the image of $N\to M\to Y$, so that we get the diagram on the right below, with $X^\prime:=\Ker(N\to Z)$. It is easy to see that the its left square is a pull-back.
	\[ 
	\xymatrix{
		0\ar[r]&K\ar@{=}[d]\ar[r]&L^\prime\ar[r]\ar[d]&X^\prime\ar[r]\ar[d]& 0\\
		0\ar[r]&K\ar[r]& L\ar[r]& X\ar[r]& 0}
	\qquad
	\xymatrix{
		0\ar[r]&X^\prime\ar[d]\ar[r]&N\ar[r]\ar[d]&Z\ar[r]\ar@{^(->}[d]& 0\\
		0\ar[r]&X\ar[r]& M\ar[r]& Y\ar[r]& 0}
	\]
	Also, consider the pull-back of the left half of the exact sequence in (\ref{eq4}) along $X^\prime\to X$, which yields a diagram on the left above.
	Now, pasting these pull-back squares yields a desired pull-back square (formed by $L^\prime$, $\L$, $N$, and $M$) which lies in $\refl\La$, so the pull-back of the short exact sequence $K\to L\to M$ in $\refl\La$ is $K\to L^\prime\xrightarrow{f^\prime}N$. Now, we have $\Coker f^\prime=Z\subset Y$ and $\sgrade Y\geq2$, thus $\sgrade Z\geq2$. We conclude by \ref{adm}(\ref{epi}) that  $K\to L^\prime\xrightarrow{f^\prime}N$ is also a kernel-cokernel pair.
\end{proof}

\subsubsection{Proof of \ref{max}, only if part}
We prove the remaining half of \ref{max}.
\begin{proof}
Suppose that $\La$ is a Noetherian ring such that $\refl\La$ is quasi-abelian. Since $\refl\La$ has kernels and cokernels, the conditions in \ref{EX-ker} and \ref{EX-coker} hold.

Let $L\to M\to N$ be a conflation in $\refl\La$. Then the sequence $0\to L\to M\to N$ is exact in $\mod\La$. We claim that the cokernel of the last map has $\sgrade\geq2$. Let $X=\Coker(M\to N)$ and $Y\subset X$ an arbitrary submodule. Pick a sujection $P\to Y$ from a projective module $P$ and lift to $P\to N$. We take a pull-back of $L\to M\to N$ along $P\to N$ to get a sequence on the first row below. This is a kernel-cokernel pair by our assumption that $\refl\La$ is quasi-abelian.
\[ \xymatrix{
	0\ar[r]&L\ar[r]\ar@{=}[d]&K\ar[r]\ar[d]&P\ar[r]\ar[d]&Y\ar[r]\ar@{^(->}[d]&0\\
	0\ar[r]&L\ar[r]&M\ar[r]&N\ar[r]&X\ar[r]&0} \]
Then, letting $Z$ be the image of $K\to P$, we see by \ref{EX-coker} that $Z\to P$ is the reflexive hull. Applying $\Hom_\La(-,\La)$ to the exact sequence $0\to Z\to P\to Y\to 0$, we immediately see $\Hom_\La(Y,\La)=\Ext^1_\La(Y,\La)=0$. This proves $\sgrade X\geq2$.

We next show that for any $A\in\mod\La^\op$, the module $\Ext^2_{\La^\op}(A,\La)\in\mod\La$ appears as the cokernel of the deflation. 
For this it is enough to start from $X\in\mod\La$ and consider the space $\Ext^2_{\La^\op}(\Tr X,\La)$. Since it is the cokernel of $X\to X^\aast$, we may assume that $X$ is torsion-free by replacing $X$ by its image in $X^\aast$. Now, for a torsion-free module $X$, pick an exact sequence $0\to L\to M\to X\to 0$ in $\mod\La$ with $L,M\in\refl\La$, for example, $M$ is projective and $L=\Om X$. Then the sequence
\[ \xymatrix{ 0\ar[r]&L\ar[r]&M\ar[r]& X^\aast } \]
is a conflation in $\refl\La$, and the cokernel of the deflation is $\Ext^2_{\La^\op}(\Tr X,\La)$.

The previous claims yield that $\La$ satisfies the $(2,2)$-condition. Since $(\refl\La)^\op=\refl(\La^\op)$ is also a quasi-abelian category, we see that $\La^\op$ also satisfies the $(2,2)$-condition.
\end{proof}
\subsection{Consequences and remarks}
We list some immediate consequences of \ref{max} for some special classes of rings satisfying the two-sided $(2,2)$-conditions.

First, we obtain the following, in view of \ref{2-Gor}. Compare \cite{BB}, where they give this result for $\dim R=2$ using a categorical argument.
\begin{Cor}
Let $R$ be a commutative Noetherian normal domain. The the category $\refl R$ is quasi-abelian.
\end{Cor}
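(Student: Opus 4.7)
The plan is to invoke Theorem \ref{max} and reduce the statement to verifying that $R$ satisfies the two-sided $(2,2)$-condition. Since $R$ is commutative we have $R = R^\op$, so it suffices to check the one-sided $(2,2)$-condition.

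First I would recall from Example \ref{2-Gor}(2) that every commutative Noetherian normal domain is $2$-Gorenstein, i.e.\ it satisfies the $(l,l)$-condition for all $1 \leq l \leq 2$. In particular the $(2,2)$-condition holds. This is the only real input needed beyond the main theorem; the chain of implications ``normal $\Leftrightarrow (R_1) + (S_2) \Rightarrow (G_1) + (S_2) \Leftrightarrow 2$-Gorenstein'' has already been established in the preceding subsection.

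With this in hand, the two-sided $(2,2)$-condition is immediate from commutativity, and Theorem \ref{max} applies to conclude that $\refl R$ is quasi-abelian. There is no serious obstacle here — the work has been done in establishing \ref{max} and in Example \ref{2-Gor}; this corollary is simply the most classical instance of the general theorem, recovering the well-known good behavior of reflexive modules over normal domains used throughout the theory of noncommutative resolutions.
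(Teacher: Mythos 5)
Your argument is correct and is exactly how the paper obtains this corollary: the result is stated "in view of \ref{2-Gor}", i.e.\ normal domains are $2$-Gorenstein, hence satisfy the $(2,2)$-condition, which is two-sided by commutativity, and Theorem \ref{max} applies. No differences to report.
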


For Artin algebras of global dimension $2$, we obtain a part of {\cite[2.1]{Iy05}}.
\begin{Cor}
Let $\La$ be an Artin algebra of global dimension $2$. Then $\proj\La$ is quasi-abelian if and only if $\La$ satisfies the two-sided $(2,2)$-condition.
\end{Cor}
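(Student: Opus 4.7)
The plan is to deduce this from \ref{max} by identifying $\proj\La$ with $\refl\La$ under the hypothesis $\gd\La = 2$.

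First I would observe that over any Noetherian ring one has the chain of inclusions $\proj\La\subseteq\refl\La\subseteq\Om^2$: projectives are trivially reflexive, and reflexive ($=$ $2$-torsion free) modules are second syzygies via the Auslander-Bridger sequence recalled above. The point is that when $\gd\La\leq 2$, every second syzygy is automatically projective, since for any $Y\in\mod\La$ with $\pd Y\leq 2$ the module $\Om^2 Y$ is the leftmost term of a projective resolution of $Y$. Hence $\Om^2 = \proj\La$ and consequently $\refl\La = \proj\La$.

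With this identification in hand, the corollary is immediate from \ref{max}: the category $\proj\La$ is quasi-abelian if and only if $\refl\La$ is quasi-abelian, which happens precisely when $\La$ satisfies the two-sided $(2,2)$-condition.

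There is essentially no obstacle here; the entire content of the statement is pushed onto the main theorem. The only minor point to verify is the equality $\Om^2 = \proj\La$ under the global dimension hypothesis, which is a one-line consequence of the definition of projective dimension. No left-right issue arises because the global dimension and reflexivity hypotheses are invoked on a fixed side, and the two-sided $(2,2)$-condition is the conclusion transported directly from \ref{max}.
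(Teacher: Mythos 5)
Your proposal is correct and is essentially the paper's intended argument: the corollary is treated as immediate from Theorem \ref{max} via the identification $\proj\La=\refl\La$ when $\gldim\La\leq2$ (the same identification the paper invokes later, in the proposition on torsion-free classes). The only cosmetic point is that a second syzygy in the paper's sense is a kernel of a map between projectives rather than literally $\Om^2Y$ of a minimal resolution, but Schanuel's lemma (or noting the cokernel has projective dimension at most $1$) closes this instantly, so there is no gap.
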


We record the following characterization of conflations in $\refl\La$.
\begin{Prop}\label{confl}
Consider the category $\refl\La$ with the maximum exact structure.
For a complex $L\xrightarrow{f} M\xrightarrow{g} N$ in $\refl\La$, the following are equivalent.
\begin{enumerate}
\renewcommand\labelenumi{(\roman{enumi})}
\renewcommand\theenumi{\roman{enumi}}
\item This is a conflation in $\refl\La$.
\item $f=\ker g$ in $\mod\La$, $C:=\Coker f\in\mod\La$ is torsion-free, and $g$ factors as $M\twoheadrightarrow C\hookrightarrow C^{\ast\ast}\xsimeq N$.
\item $0\to L\xrightarrow{f} M\xrightarrow{g} N$ is exact in $\mod\La$ and $\Coker g$ has $\sgrade\geq2$.
\end{enumerate}
\end{Prop}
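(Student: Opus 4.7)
The plan is to assemble this from the preceding structural lemmas. Since the statement speaks of the maximum exact structure on $\refl\La$, the category is quasi-abelian, so by \ref{max} the ring $\La$ satisfies the two-sided $(2,2)$-condition. In particular both \ref{EX-ker} and its opposite apply, so $\refl\La$ has kernels (computed as in $\mod\La$) and cokernels, and the criteria of \ref{adm} for admissible monomorphisms and epimorphisms are available.

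The equivalence (i)$\Leftrightarrow$(ii) is immediate from \ref{ses}, which was established under exactly the hypothesis that $\refl\La$ admits kernels and cokernels. Only (i)$\Leftrightarrow$(iii) requires a short argument.

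For (i)$\Rightarrow$(iii), I would unpack the definition of a conflation. By \ref{adm}(\ref{mono}), the inflation $f$ is monic in $\mod\La$; and $f=\ker g$ in $\refl\La$, which by \ref{EX-ker} coincides with $\ker g$ computed in $\mod\La$. This yields exactness of $0\to L\xrightarrow{f}M\xrightarrow{g}N$ in $\mod\La$. The condition on $\Coker g$ is then precisely \ref{adm}(\ref{epi}) applied to the deflation $g$.

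For (iii)$\Rightarrow$(i), the exactness $0\to L\to M\to N$ in $\mod\La$ identifies $\Coker f$ with $\Im g\subset N$; since $N$ is reflexive it is torsion-free, hence so is $\Coker f$. By \ref{adm}(\ref{mono}), $f$ is an admissible monomorphism. The hypothesis $\sgrade \Coker g\geq 2$ together with \ref{adm}(\ref{epi}) shows that $g$ is an admissible epimorphism, so $g$ fits into some kernel-cokernel pair in $\refl\La$; its kernel there agrees with $\ker g$ in $\mod\La$ by \ref{EX-ker}, which is $L$ via $f$. Thus $L\xrightarrow{f}M\xrightarrow{g}N$ itself is a kernel-cokernel pair, i.e., a conflation in the maximum exact structure.

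There is no genuine obstacle: the proposition is a repackaging of \ref{ses}, \ref{adm} and \ref{EX-ker}. The only small point worth highlighting is that in (iii)$\Rightarrow$(i) one does not need to assume torsion-freeness of $\Coker f$ separately, as it is forced by the embedding $\Coker f\hookrightarrow N$ into a reflexive module.
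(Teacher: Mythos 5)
Your proposal is correct and follows exactly the route the paper intends: the paper records \ref{confl} without a separate proof precisely because it is the assembly of \ref{ses}, \ref{adm} and \ref{EX-ker} (under the two-sided $(2,2)$-condition guaranteed by \ref{max}) that you carry out. Your added remark that torsion-freeness of $\Coker f$ in (iii)$\Rightarrow$(i) is automatic from the embedding into the reflexive module $N$ is the right observation and matches why (iii) omits that condition.
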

%

We note the following information where (ii)$\Leftrightarrow$(iii) a special case of \cite[2.4]{Iy03}.
\begin{Rem}
If $\La$ satisfies the two-sided $(2,2)$-condition, the following are equivalent for $X\in\mod\La$.
\begin{enumerate}
\renewcommand\labelenumi{(\roman{enumi})}
\renewcommand\theenumi{\roman{enumi}}
\item $X\simeq\Ext^2_{\La^\op}(A,\La)$ for some $A\in\mod\La^\op$.
\item $\sgrade X\geq2$.
\item $\grade X\geq2$.
\end{enumerate}
Indeed, we only have to show that $\grade X\geq2$ implies $X\simeq\Ext^2_{\La^\op}(A,\La)$ for some $A\in\mod\La^\op$. Pick an exact sequence $0\to L\to M\to N\to X\to 0$ in $\mod\La$ with $L,M,N\in\refl\La$. For example, one can take $M\to N$ as a projective presentation of $X$ and $L$ as its cokernel. Let $C$ be the image of $M\to N$. Since $\grade X\geq2$ we see as in the proof of \ref{EX-coker} that $C\to N$ is the reflexive hull, and hence the exact sequence $0\to C\to N\to X\to0$ in $\mod\La$ is isomorphic to $0\to C\to C^\aast\to \Ext^2_{\L^\op}(\Tr C,\La)\to0$, in particular $X\simeq\Ext^2_{\L^\op}(\Tr C,\La)$.
%
\end{Rem}

\subsection{Abelian categories}
We have seen that the category $\refl\La$ is quasi-abelian if and only if $\La$ satisfies the two-sided $(2,2)$-condition. We further give a characterization when $\refl\La$ is abelian.
\begin{Thm}\label{abelian}
Let $\La$ be a Noetherian ring. Then $\refl\La$ is abelian if and only if $\ddim\La\geq2$.
\end{Thm}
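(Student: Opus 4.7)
The plan is to handle the two implications separately. For the sufficient direction, I would first observe that $\ddim\La \geq 2$ is the $(1,2)$-condition, which by Ho's left-right symmetry of the dominant dimension is two-sided, and trivially implies the two-sided $(2,2)$-condition (flat dimension $<1$ implies $<2$). So $\refl\La$ is quasi-abelian by \ref{max}, and it only remains to verify that every categorical epimorphism in $\refl\La$ is admissible (standard fact: in a quasi-abelian category, abelianness is equivalent to admissibility of all epis, or of all monos). Given an epi $f\colon M\to N$ in $\refl\La$, its module-theoretic cokernel $C=\coker f$ satisfies $C^{**}=0$, hence $C^*=0$. The Auslander-Bridger sequence then identifies $C$ with $\Ext^1_{\La^\op}(\Tr C,\La)$, and the $(1,2)$-condition gives $\sgrade C\geq 2$, so $f$ is admissible by \ref{adm}(\ref{epi}).

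For the necessary direction, assume $\refl\La$ is abelian, hence quasi-abelian, so the two-sided $(2,2)$-condition holds by \ref{max}. To obtain $(1,2)$, I would fix $X\in\mod\La^\op$ with projective resolution $\cdots\to P_2\to P_1\to P_0\to X\to 0$, dualize to the complex $0\to X^*\to P_0^*\to P_1^*\to P_2^*\to\cdots$ in $\mod\La$, and introduce $L:=\Im(P_0^*\to P_1^*)$ and $K:=\Ker(P_1^*\to P_2^*)$, so that $\Ext^1_{\La^\op}(X,\La)=K/L$ sits inside $\Tr X=P_1^*/L$.

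The crux is to show, using abelianness, that the image of the morphism $P_0^*\to P_1^*$ in the abelian category $\refl\La$ equals $K$ as a submodule of $P_1^*$, while its coimage is the reflexive hull $L^{**}$. By \ref{EX-coker} together with the general fact that $X^*$ is always reflexive, the coimage $\coker_{\refl\La}(X^*\hookrightarrow P_0^*)$ is $L^{**}$ abstractly. For the image, \ref{EX-ker} and \ref{EX-coker} give $\Im_{\refl\La}=\ker_{\mod\La}(P_1^*\to(\Tr X)^{**})$; then the Auslander-Bridger sequence applied to $\Tr X$ identifies $\ker(\Tr X\to(\Tr X)^{**})$ with $\Ext^1_{\La^\op}(\Tr\Tr X,\La)=\Ext^1_{\La^\op}(X,\La)=K/L\subset\Tr X$, so its preimage in $P_1^*$ is precisely $K$. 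Abelianness forces image and coimage to coincide as subobjects of $P_1^*$, yielding an isomorphism $L^{**}\xrightarrow{\sim} K$ compatible with the canonical inclusions of $L$.

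Consequently $L^{**}/L\simeq K/L=\Ext^1_{\La^\op}(X,\La)$. Applying the Auslander-Bridger sequence to $L\in\mod\La$ gives $L^{**}/L=\Ext^2_{\La^\op}(\Tr L,\La)$, which has $\sgrade\geq 2$ by the $(2,2)^\op$-condition. Combining these, $\sgrade\Ext^1_{\La^\op}(X,\La)\geq 2$ for every $X$, which is the $(1,2)$-condition. The main obstacle I expect is the careful verification that the image of $P_0^*\to P_1^*$ in $\refl\La$ coincides, as a submodule of $P_1^*$, with the kernel $K$; this requires tracing through the formulas for kernels and cokernels in $\refl\La$ and matching the ``torsion submodule'' description of $\Ext^1_{\La^\op}(X,\La)$ via Auslander-Bridger with the cohomological description $K/L$ coming from the dualized resolution.
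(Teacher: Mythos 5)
Your proposal is correct, and one half of it diverges from the paper's argument in an interesting way. The sufficiency direction is essentially the paper's: the module-theoretic core (the cokernel of an epimorphism in $\refl\La$ is torsion, hence of the form $\Ext^1_{\La^\op}(\Tr C,\La)$, hence has $\sgrade\geq2$ by the $(1,2)$-condition, so the map is admissible by \ref{adm}(\ref{epi})) is exactly what the paper does via \ref{tors}; you streamline it by checking only epimorphisms, which is legitimate because in a quasi-abelian category the canonical morphism $\mathrm{coim}\to\mathrm{im}$ is always a bimorphism, so admissibility of all epimorphisms alone already forces it to be an isomorphism --- the paper, by contrast, also verifies admissibility of monomorphisms directly via \ref{easy}. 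The necessity direction is where you take a genuinely different route: the paper reduces via \ref{tors} to showing $\sgrade C\geq2$ for every torsion module $C$, applies abelianness to the projective presentation $P_1\to P_0\to C\to0$ (an epimorphism in $\refl\La$), and reads off the conclusion from \ref{confl}; you instead compute, for the dualized complex of a resolution of $X\in\mod\La^\op$, the image and coimage of $P_0^\ast\to P_1^\ast$ inside the abelian category $\refl\La$, identify them with $K$ and $L^{\ast\ast}$ using \ref{EX-ker} and \ref{EX-coker}, and conclude $\Ext^1_{\La^\op}(X,\La)=K/L\simeq L^{\ast\ast}/L=\Ext^2_{\La^\op}(\Tr L,\La)$, which has $\sgrade\geq2$ by the $(2,2)$-condition. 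Your crux step --- that the torsion submodule of $\Tr X$ is precisely $K/L$, so that the preimage of it in $P_1^\ast$ is $K$ --- does check out (it is the standard cycles/boundaries computation underlying the Auslander--Bridger sequence), so there is no gap; the cost is that your route needs this verification, while the paper's is shorter and works with an arbitrary torsion module rather than an explicit dualized resolution; the benefit is that your route makes explicit the pleasant identity $\Ext^1_{\La^\op}(X,\La)\simeq\Ext^2_{\La^\op}(\Tr L,\La)$, showing very concretely how the $(1,2)$-condition is extracted from $(2,2)$ plus abelianness. Two cosmetic slips: the relevant hypothesis for $\sgrade\Ext^2_{\La^\op}(\Tr L,\La)\geq2$ is the $(2,2)$-condition on $\La$ itself (not $(2,2)^\op$) in the paper's indexing, and in the sufficiency direction the natural deduction is that the evaluation map $C\to C^{\ast\ast}$ vanishes, hence $C^\ast=0$ (and then $C^{\ast\ast}=0$), rather than the reverse implication you wrote; neither affects the argument since the two-sided condition holds and the three conditions are equivalent.
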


\begin{Rem}
It should be noted that $\ddim\La\geq2$ implies $\La$ is Artinian \cite[Proposition 7]{IwS}. Therefore, when $\La$ is finitely generated over its center, the situation of \ref{abelian} is the same as the situation of Morita-Tachikawa correspondence (see \ref{MT}).
\end{Rem}

Let $\La$ be an arbitrary Noethrian ring. We say that a $\La$-module $C$ is {\it torsion} if it satisfies the following equivalent conditions.
\begin{itemize}
\item $\Hom_\La(C,\La)=0$.
\item The natural map $C\to C^\aast$ is $0$.
\end{itemize}

The following observation is crucial.
\begin{Lem}\label{tors}
Let $\La$ be an arbitrary Noetherian ring. Then $\ddim\La\geq2$ if and only if $\sgrade C\geq2$ for every torsion $\La$-module $C$.
\end{Lem}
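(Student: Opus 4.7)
The plan is to apply Propositions \ref{sgrade} and \ref{TorI} to reformulate both sides as vanishings of $\Hom$-groups with the injective resolution terms $I^0, I^1$ of $\La$. By \ref{sgrade}, $\sgrade C \geq 2$ is equivalent to $\Hom(C, I^0) = \Hom(C, I^1) = 0$; and by \ref{TorI}, $\ddim \La \geq 2$ (i.e., $\fld I^j < 1$ for $j = 0, 1$) is equivalent to $\Hom(\Ext^1_{\La^\op}(X, \La), I^j) = 0$ for all $X \in \mod \La^\op$ and $j = 0, 1$.

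For the forward direction, given a torsion module $C$ and the flatness of $I^j$, I would invoke Lazard's theorem to express $I^j$ as a filtered colimit of finitely generated free modules $\La^{n_\alpha}$. Since $\La$ is Noetherian, $C$ is finitely presented and so $\Hom(C,-)$ commutes with this colimit; thus $\Hom(C, I^j) = \colim_\alpha \Hom(C,\La)^{n_\alpha} = 0$.

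For the reverse direction, suppose $\sgrade C \geq 2$ for every torsion $C$. The hypothesis immediately forces the class of torsion modules to be a Serre subcategory of $\mod \La$: closure under submodules is because $\sgrade C \geq 2 \geq 1$ makes every submodule torsion, while closure under quotients and extensions is automatic. Consequently the maximal torsion submodule $\tau(M) \subset M$ exists for any finitely generated $M$. The heart of the argument is to show that $\Ext^1_{\La^\op}(X, \La)$ is itself torsion for every $X \in \mod \La^\op$, since then the hypothesis directly yields $\sgrade \Ext^1_{\La^\op}(X, \La) \geq 2$. To this end I would use the Auslander-Bridger identification $\Ext^1_{\La^\op}(X, \La) = \Ker(\eta_M \colon M \to M^{\aast})$ for some $M \in \mod \La$ with $\Tr M = X$, and aim to prove $\Ker(\eta_M) = \tau(M)$. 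The inclusion $\tau(M) \subset \Ker(\eta_M)$ is immediate, since any $f \in M^*$ restricts to zero on a torsion submodule.

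The main obstacle is the reverse inclusion $\Ker(\eta_M) \subset \tau(M)$. My strategy is to pass to $M/\tau(M)$ and show it is torsionless, i.e., that $\eta_{M/\tau(M)}$ is injective. The quotient has no nonzero torsion submodule by maximality of $\tau$; to upgrade this to torsionlessness, one must extend any nonzero map $\La m \to \La$ from a cyclic submodule to the whole quotient, contradicting $m \in \Ker(\eta)$. The obstruction to such extensions lives in an $\Ext^1$-group, and is controlled by the key hypothesis-derived vanishing $\Ext^1(T, \La) = 0$ for torsion $T$ (from $\grade T \geq 2$). Arranging this extension cleanly, likely by a short iteration that successively quotients out torsion obstructions, is the technical heart of the proof.
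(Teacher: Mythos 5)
Your forward direction is fine: reading $C\in\mod\La$ (as the paper implicitly does), flatness of $I^0,I^1$ plus Lazard's theorem gives $\Hom_\La(C,I^0)=\Hom_\La(C,I^1)=0$ for a finitely presented torsion $C$, and \ref{sgrade} then yields $\sgrade C\geq2$. This is a harmless variant of the paper's argument, which instead writes a torsion module as $\Ext^1_{\La^\op}(\Tr C,\La)$ via the Auslander--Bridger sequence and quotes the $(1,2)$-condition in the form $\sgrade\Ext^1_{\La^\op}(X,\La)\geq2$.

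The backward direction, however, is not a proof, and the step you defer --- the inclusion $\Ker(\eta_M)\subseteq\tau(M)$, equivalently the claim that every module of the form $\Ext^1_{\La^\op}(X,\La)$ is torsion --- cannot be extracted from the stated hypothesis over an arbitrary Noetherian ring. Take $\La=k[x,y]/(x,y)^2$. Every nonzero finitely generated $\La$-module surjects onto a simple module, which embeds into $\soc\La\neq0$, so the only torsion module is $0$; hence your hypothesis, and with it the vanishing $\Ext^1_\La(T,\La)=0$ for torsion $T$ that is supposed to kill your extension obstructions, is vacuous. Yet $\Ext^1_\La(k,\La)\cong k^3$ is nonzero (apply $(-)^\ast$ to $0\to\m\to\La\to k\to0$, using $\m\cong k^2$ and $\Hom_\La(k,\La)=\soc\La\cong k^2$), hence not torsion, and $\ddim\La=0$ because $I^0=E(\La)\cong E(k)^{\oplus2}$ is not flat ($E(k)$ is $3$-dimensional with simple socle, so it is not free). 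So no iteration of ``quotienting out torsion obstructions'' can make $M/\tau(M)$ torsionless from the torsion hypothesis alone. You have in fact put your finger on the thin point of the paper's own proof, which asserts that the torsion modules are \emph{exactly} the modules $\Ext^1_{\La^\op}(X,\La)$: the half of that assertion you could not prove is the half that fails in the example above. In the paper's application (Theorem \ref{abelian}) the missing torsionness of $\Ext^1_{\La^\op}(X,\La)$ is available for a different reason: there $\La$ is forced to satisfy the two-sided $(2,2)$-condition, and the Remark following Lemma \ref{easy} (with the two sides interchanged) then gives $\grade\Ext^1_{\La^\op}(X,\La)\geq1$. Any correct completion of your argument must import an input of this kind about the ring itself; it cannot come from the hypothesis on torsion modules.
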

\begin{proof}
	By the above equivalent conditions and the Auslander-Bridger sequence $0\to\Ext^1_{\La^\op}(\Tr C,\La)\to C\to C^\aast$, the torsion $\La$-modules are exactly the modules of the form $\Ext^1_{\La^\op}(X,\La)$ for some $X\in\mod\La^\op$. We then obtain the desired result by recalling that $\ddim\La\geq2$ is nothing but the $(1,2)$-condition on $\La$.
\end{proof}

Now we are ready to prove the main result of this subsection.
\begin{proof}[Proof of \ref{abelian}]
	Note first that the condition $\ddim\La\geq2$ is nothing but the $(1,2)$-condition. By left-right symmetry of dominant dimension \cite{Ho}, this is equivalent to the two-sided $(1,2)$-condition. In particular, it satisfies the two-sided $(2,2)$-condition.
	Note also that abelian categories are exactly the quasi-abelian categories where every monomorphism (resp. epimorphism) is admissible (with respect to the maximum exact structure).
	
	\medskip
	Suppose that $\ddim\La\geq2$, so that $\sgrade\Ext^1_{\La^\op}(X,\La)\geq2$ for all $X\in\mod\La^\op$. We want to prove that every monomorphism (resp. epimorphism) in $\refl\La$ is admissible.
	
	Let $L\to M$ be a monomorphism in $\refl\La$, which is just a monomorphism in $\mod\La$. In view of \ref{adm}(\ref{mono}), we have to show that the cokernel $C$ in $\mod\La$ is torsion-free, in other words, that the torsion part $T:=\Ker(C\to C^\aast)=\Ext^1_{\La^\op}(\Tr C,\La)$ is $0$. By assumption we have $\grade T\geq2$, thus $\Hom_\La(T,N)=\Ext^1_\La(T,N)=0$ for all $N\in\refl\La$ by \ref{easy}. Then, applying $\Hom_\La(T,-)$ to the exact sequence $0\to L\to M\to C\to 0$ yields an exact sequence
	\[ \xymatrix@!R=1mm{
		\Hom_\La(T,M)\ar[r]\ar@{=}[d]&\Hom_\La(T,C)\ar[r]&\Ext^1_\La(T,L)\ar@{=}[d]\\
		0&&0&, } \]
	which shows $\Hom_\La(T,C)=0$, hence $T=0$.
	
	We next consider an epimorphism $M\to N$ in $\refl\La$. It is easy to see that $M\to N$ is an epimorphism in $\refl\La$ if and only if the cokernel $C$ in $\mod\La$ is torsion. Then by \ref{tors} and \ref{adm}(\ref{epi}), we see that $M\to N$ is admissible.
	
	\medskip
	Now suppose conversely that $\refl\La$ is abelian. By \ref{tors} we have to show that $\sgrade C\geq2$ for all torsion $\La$-module $C$. Let $C$ be as such and consider a projective presentation $P_1\xrightarrow{f}P_0\to C\to 0$. Then $f\colon P_1\to P_0$ is an epimorphism in $\refl\La$ since $C$ is torsion, which extends to a conflation $\Ker f\to P_1\to P_0$ by our assumption that $\refl\La$ is abelian. By \ref{confl}, we conclude that the cokernel $C$ of the deflation has $\sgrade\geq2$.
\end{proof}

\section{Artin algebras}
We apply the results in the previous section to finite dimensional over a field $k$. Let $\La$ be such an algebra and
\[ \xymatrix{ 0\ar[r]&\La\ar[r]&I^0\ar[r]&I^1\ar[r]&I^2\ar[r]&\cdots } \]
be the minimal injective resolution in $\mod\La$.
Then,
\begin{itemize}
\item the $(1,2)$-condition says $\pd I^0=\pd I^1=0$, that is, $\ddim\La\geq2$.
\item the $(2,2)$-condition says $\pd I^0\leq1$ and $\pd I^1\leq1$,
\end{itemize}

%
%

Let us recall the following classical result on finite dimensional algebras of $\ddim\geq2$. We say that pairs $(\Si,A)$ and $(\Si^\prime,A^\prime)$ consisting of a finite dimensional algebra $\Si$ (resp. $\Si^\prime$) and $A\in\mod\Si$ (resp. $A^\prime\in\mod\Si^\prime$) are {\it Morita equivalent} if there is an equivalence $\mod\Si\simeq\mod\Si^\prime$ restricting to an equivalence $\add A\simeq\add A^\prime$. Also, we say that a module $A\in\mod\Si$ is a {\it generator} (resp. a {\it cogenerator}) if $\Si\in\add A$ (resp. $D\Si\in\add A$).
\begin{Thm}[Morita-Tachikawa correspondence]\label{MT}
There exists a bijection between the following.
\begin{enumerate}
\renewcommand\labelenumi{(\roman{enumi})}
\renewcommand\theenumi{\roman{enumi}}
\item The set of Morita equivalence classes of pairs $(\Si,A)$, where $\Si$ is a finite dimensional algebra and $A\in\mod\Si$ a generator-cogenerator.
\item The set of Morita equivalence classes of finite dimensional algebras $\La$ with $\ddim\La\geq2$.
\end{enumerate}
The correspondence is given by $(\Si,A)\mapsto\End_\Si(A)$.
\end{Thm}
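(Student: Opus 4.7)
I will construct an explicit inverse to the map $(\Sigma, A) \mapsto \End_\Sigma(A)$ and verify that both compositions are the identity up to Morita equivalence. The inverse sends a finite-dimensional $\Lambda$ with $\ddim \Lambda \geq 2$ to the pair $(\Sigma_\Lambda, A_\Lambda) := ((e\Lambda e)^{\op}, \Lambda e)$, where $e \in \Lambda$ is an idempotent such that $\Lambda e$ is a minimal additive generator of $\proj \Lambda \cap \inj \Lambda$; equivalently, $\add \Lambda e$ contains the first two terms of the minimal injective resolution of $\Lambda$, both projective by hypothesis. Here $A_\Lambda = \Lambda e$ is regarded as a left $\Sigma_\Lambda$-module via its natural right $e\Lambda e$-module structure, and well-definedness of both assignments on Morita equivalence classes is routine.

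\textbf{Forward direction.} For $(\Sigma, A)$ with $A$ a generator-cogenerator and $\Lambda := \End_\Sigma(A)$, take a minimal injective copresentation $0 \to A \to J^0 \to J^1$ in $\mod\Sigma$. Since $A$ is a cogenerator, $J^0, J^1 \in \add D\Sigma \subseteq \add A$. Applying $\Hom_\Sigma(A, -) \colon \mod\Sigma \to \mod\Lambda^{\op}$ yields an exact sequence
\[
0 \to \Lambda \to \Hom_\Sigma(A, J^0) \to \Hom_\Sigma(A, J^1)
\]
whose outer terms lie in $\add \Hom_\Sigma(A, A) = \add \Lambda$, hence are projective. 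They are moreover injective because the Morita--Watts theorem, applied to the generator $A$ of $\mod\Sigma$, forces $A$ to be a finitely generated projective $\Lambda$-module (on the appropriate side), so the left adjoint $-\otimes_\Lambda A$ of $\Hom_\Sigma(A, -)$ is exact and $\Hom_\Sigma(A, -)$ preserves injectives. This exhibits an injective copresentation of $\Lambda \in \mod\Lambda^{\op}$ by projective-injectives, whence $\ddim \Lambda^{\op} \geq 2$; by Hoshino's left-right symmetry of dominant dimension, $\ddim\Lambda \geq 2$.

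\textbf{Backward direction.} Given $\Lambda$ with $\ddim\Lambda \geq 2$ and $e$ as above, $A_\Lambda = \Lambda e$ is a generator of $\mod\Sigma_\Lambda$ because $\Lambda e = e\Lambda e \oplus (1-e)\Lambda e$ as right $e\Lambda e$-modules yields $\Sigma_\Lambda = e\Lambda e \in \add A_\Lambda$. The identification $\End_{\Sigma_\Lambda}(A_\Lambda) \cong \Lambda$ is the classical double centralizer property, which applies because $\ddim\Lambda \geq 1$ produces an embedding $\Lambda \hookrightarrow (\Lambda e)^n$. The delicate point is verifying the cogenerator condition $D(e\Lambda e) \in \add(\Lambda e)$ as right $e\Lambda e$-modules. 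The key input is that $\ddim\Lambda \geq 2$ together with its left-right symmetry forces the Nakayama functor $\nu = D\Hom_\Lambda(-, \Lambda)$ to restrict to an autoequivalence of $\add \Lambda e$, giving $\nu(\Lambda e) = D(e\Lambda) \in \add \Lambda e$; applying $\Hom_\Lambda(\Lambda e, -)$ and using the natural isomorphism $\Hom_\Lambda(\Lambda e, D(e\Lambda)) \cong D(e\Lambda \otimes_\Lambda \Lambda e) = D(e\Lambda e)$ then exhibits $D(e\Lambda e)$ as a summand of $(e\Lambda e)^n \in \add(\Lambda e)$.

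\textbf{Mutual inverse and main obstacle.} For the round-trip $(\Sigma, A) \to \Lambda \to (\Sigma_\Lambda, A_\Lambda)$, the idempotent $e \in \Lambda$ corresponding to the summand $D\Sigma \subseteq A$ gives $e\Lambda e \cong \End_\Sigma(D\Sigma) \cong \Sigma^{\op}$, hence $\Sigma_\Lambda = (e\Lambda e)^{\op} \cong \Sigma$; the equivalence $\add A \xsimeq \proj \Lambda$ then identifies $\Lambda e$ with $A$. The reverse composite is the identity by the double centralizer verified above. I expect the main technical obstacle to be the $\nu$-stability claim $\nu(\Lambda e) \in \add\Lambda e$: translating the ring-theoretic condition $\ddim\Lambda \geq 2$ into this categorical statement about projective-injectives requires combining the injective resolution $0 \to \Lambda \to I^0 \to I^1$ with its dualization on $\mod\Lambda^{\op}$, and this is the heart of the argument.
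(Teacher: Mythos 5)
Your forward direction is the standard (and correct) argument: a generator $A$ is finitely generated projective over $\La=\End_\Si(A)$ with the double centralizer property, so $\Hom_\Si(A,-)$ preserves injectives and sends an injective copresentation of $A$ to a copresentation of $\La$ by projective-injectives. (Note the paper does not prove \ref{MT} at all — it quotes it as classical — so the comparison here is with the classical argument.) The genuine gap is in your backward direction, at the cogenerator step. The claimed key lemma, that $\ddim\La\geq2$ forces $\nu(\La e)=D(e\La)\in\add\La e$, is false: take $\La$ to be the Auslander algebra of $\Si=kA_2$, i.e.\ $k[1\to2\to3]$ modulo the path of length two; then $\ddim\La=2$, but the projective-injective summand $\Hom_\Si(A,S)$, where $S$ is the simple injective $\Si$-module, has Nakayama image a simple $\La$-module which is injective but not projective, so $\nu$ does not restrict to $\proj\La\cap\inj\La$. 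Worse, even granting that lemma, your deduction would exhibit $D(e\La e)$ as a direct summand of some $(e\La e)^n$, i.e.\ would prove that $e\La e\cong\Si^{\op}$ is self-injective — absurd, since in the correspondence $\Si$ is an arbitrary finite dimensional algebra and $kA_2$ is not self-injective; this signals a left/right structure slip in that computation. The statement you actually need is weaker and does not involve $\nu$ at all: since ${}_\La\La e$ is injective, its dual $D(\La e)$ is a projective right $\La$-module, so $D(\La e)\in\add\La_\La$; applying the additive functor $X\mapsto Xe$ from right $\La$-modules to right $e\La e$-modules, together with the isomorphism $D(\La e)e\cong D(e\La e)$, gives $D(e\La e)\in\add\bigl((\La e)_{e\La e}\bigr)$, which is precisely the cogenerator condition.

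A second, smaller issue is the double centralizer step: your justification "$\ddim\La\geq1$ produces an embedding $\La\hookrightarrow(\La e)^n$" only yields injectivity of the canonical map $\La\to\End_{e\La e}(\La e)$, not surjectivity; the isomorphism is M\"uller's theorem and genuinely uses $\ddim\La\geq2$, via the exact sequence $0\to\La\to I^0\to I^1$ with $I^0,I^1\in\add\La e$ (apply $e(-)$ and compare $\Hom_\La(X,Y)$ with $\Hom_{e\La e}(eX,eY)$ for $Y$ copresented by $\add\La e$). Since you do have $\ddim\La\geq2$ available, this step is repairable by citing or reproving M\"uller's theorem, but as written the heart of the backward direction — both the double centralizer and the cogenerator property — rests on claims that are respectively unjustified and false.
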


Let $\La$ be a finite dimensional algebra of $\ddim\geq2$. Then by \ref{abelian} the category $\refl\La$ is abelian. On the other hand, the Morita-Tachikawa correspondence above says that there is a finite dimensional algebra $\Si$ and a generator-cogenerator $A\in\mod\Si$ such that $\La=\End_\Si(A)$. Our interpretation of \ref{MT} in terms of \ref{abelian} is the following, which says that the abelian category $\refl\La$ is nothing but the module category over the original algebra $\Si$.
\begin{Prop}\label{mt}
We have an equivalence $\refl\La\simeq\mod\Si$.
\end{Prop}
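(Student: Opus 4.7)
The plan is to show that the functor $F := \Hom_\Si(A, -) \colon \mod\Si \to \mod\La$ induces the claimed equivalence. Three things must be verified: that $F$ factors through $\refl\La$, that it is essentially surjective onto $\refl\La$, and that it is fully faithful.

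I would first check that $F$ takes values in $\refl\La$. The hypothesis $\ddim\La\geq2$ is the $(1,2)$-condition, which in particular implies the two-sided $(2,2)$-condition; hence by \ref{EX-ker} one has $\refl\La = \Om^2(\mod\La)$. Since $A$ is a cogenerator, $D\Si \in \add A$, so every $M \in \mod\Si$ admits an injective copresentation $0 \to M \to I^0 \to I^1$ with $I^0, I^1 \in \add D\Si \subset \add A$. Applying the left exact functor $F$ yields $0 \to F(M) \to F(I^0) \to F(I^1)$ in $\mod\La$ with $F(I^i) \in \add F(A) = \add\La$, so $F(M) \in \Om^2 = \refl\La$. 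Essential surjectivity is then immediate: given $X \in \Om^2$ with $0 \to X \to \La^p \to \La^q$ exact, lift the differential via the equivalence $F|_{\add A}\colon \add A \xsimeq \add\La$ to some $A^p \to A^q$ in $\mod\Si$, and then $F(\ker(A^p \to A^q)) \simeq X$ by left exactness of $F$.

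The main obstacle will be full faithfulness, which is the classical content of the Morita-Tachikawa correspondence. My approach is to establish that the counit $\epsilon_M\colon A \otimes_\La F(M) \to M$ of the adjunction $-\otimes_\La A \dashv F$ is an isomorphism for every $M \in \mod\Si$; this is equivalent to full faithfulness of $F$. Using that $A$ is a generator, one chooses a presentation $A^p \to A^q \to M \to 0$ in $\mod\Si$ whose image under $F$ remains right-exact, namely by taking $A^q \to M$ to correspond to a set of $\La$-module generators of $\Hom_\Si(A, M)$ and handling the kernel analogously. Applying the right exact functor $-\otimes_\La A$ to the resulting exact sequence $F(A^p) \to F(A^q) \to F(M) \to 0$, and using that $\epsilon_{A^i}$ is an isomorphism for $A^i \in \add A$ (since $F|_{\add A}$ is an equivalence with inverse $-\otimes_\La A|_{\add \La}$), a direct comparison with the original presentation $A^p \to A^q \to M \to 0$ forces $\epsilon_M$ to be an isomorphism.
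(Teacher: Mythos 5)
Your proof is correct and follows essentially the same route as the paper's: both verify that $\Hom_\Si(A,-)$ lands in $\refl\La$ via an injective copresentation in $\add D\Si\subset\add A$ together with $\refl\La=\Om^2$ (which holds under the $(2,2)$-condition implied by $\ddim\La\geq2$, using \ref{EX-ker}), and both obtain essential surjectivity by lifting $0\to X\to\La^p\to\La^q$ through the equivalence $\add A\simeq\proj\La$ and taking kernels. The only divergence is full faithfulness: the paper simply invokes the standard generator argument carried out in the proof of \ref{Morita} (an $\add A$-approximation presentation plus the five lemma applied to $\Hom$-sequences), whereas you prove the equivalent statement that the counit of $-\otimes_\La A\dashv\Hom_\Si(A,-)$ is invertible by tensoring the image of such a presentation back and comparing; both are standard and interchangeable, so this is a cosmetic rather than substantive difference.
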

\begin{proof}
	By Morita-Tachikawa correspondence we may start from the pair $(\Si,A)$ and put $\La=\End_\Si(A)$. We show that the functor 
	\[ \xymatrix{ \Hom_\Si(A,-)\colon\mod\Si\ar[r]&\refl\La } \]
	is an equivalence.
	
	We first note that this functor is well-defined, that is, the functor takes value in $\refl\La$. Indeed, take an injective resolution $0\to X\to I^0\to I^1$ of $X\in\mod\Si$, which yields an exact sequence $0\to\Hom_\Si(A,X)\to\Hom_\Si(A,I^0)\to\Hom_\Si(A,I^1)$ in $\mod\La$. Since $A$ is a cogenerator the last two terms lie in $\proj\La$, thus $\Hom_\Si(A,X)$ is a second syzygy, hence is reflexive.
	
	Next, it is well-known that $A$ being a generator implies that the functor $\Hom_\Si(A,-)$ is fully faithful (see e.g.\! the proof of \ref{Morita} below).
	
	Finally, we show that the functor is dense. If $M\in\refl\La$ then there is an exact sequence $0\to M\to P^0\to P^1$ in $\mod\La$ with $P^0,P^1\in\proj\La$. Since $\Hom_\Si(A,-)\colon\add A\to\proj\La$ is an equivalence, the map $P^0\to P^1$ comes from a map $A^0\to A^1$ in $\add A\subset\mod\Si$. Letting $B\in\mod\Si$ be its kernel, we see that $M=\Hom_\Si(A,B)$.
\end{proof}

For example, one can take (higher) Auslander algebras as algebras \cite{Iy07a,Iy07b} of dominant dimension $\geq2$. 
\begin{Ex}
Let $n\geq1$ and let $\La$ be an $n$-Auslander algebra, that is one has $\gldim\La\leq n+1\leq\domdim\La$. Let $(\Si,A)$ be the pair consisting of a finite dimensional algebra $\Si$ and $A\in\mod\Si$ an $n$-cluster tilting object. By \ref{mt} we see that $\mod\Si\simeq\refl\La$, which is a description of the original module category $\mod\Si$ in terms of the (higher) Auslander algebra $\La$.
\end{Ex}
	
As another instance of an interpretation of our result, we recall the following result on finite dimensional algebras satisfying the two-sided $(2,2)$-condition.
\begin{Thm}[Auslander correspondence for torsion(-free) classes, \cite{Iy05}]\label{Ators}
Let $\La$ be a finite dimensional algebra. There exists a bijection between the following.
\begin{enumerate}
\renewcommand\labelenumi{(\roman{enumi})}
\renewcommand\theenumi{\roman{enumi}}
\item The set of equivalence classes of categories $\C$ such that there is a finite dimensional algebra $\Si$ such that $\C$ is equivalent to a torsion-free class in $\mod\Si$.
\item The set of Morita equivalence classes of finite dimensional algebras $\La$ of global dimension $\leq2$ satisfying the two-sided $(2,2)$-condition.
\end{enumerate}
\end{Thm}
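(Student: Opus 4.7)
The plan is to set up the correspondence by $\La \mapsto \refl\La$ in one direction, and $\C \mapsto \End_\C(M)^{\op}$ in the other, where $M$ is an additive generator of $\C$ (implicit in the statement: for the endomorphism algebra to be finite-dimensional, a torsion-free class equivalent to $\C$ must have such a generator, so that $\mathcal{F} = \add M$). The two main tools are Theorem \ref{max}, which characterizes $\refl\La$ as quasi-abelian in terms of the two-sided $(2,2)$-condition, and the Morita theorem \ref{iMorita}, which recovers an algebra from a quasi-abelian category equipped with a generator-cogenerator.

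For the direction $(\mathrm{i}) \Rightarrow (\mathrm{ii})$: let $\mathcal{F} \subset \mod\Si$ be a torsion-free class with additive generator $M$, and set $\La := \End_\Si(M)^{\op}$. Closure of $\mathcal{F}$ under subobjects and extensions in $\mod\Si$ makes it quasi-abelian, and $M$ is trivially a generator-cogenerator. Theorem \ref{iMorita} then gives both that $\La$ satisfies the two-sided $(2,2)$-condition and that $\Hom_\Si(M,-)\colon \mathcal{F} \xrightarrow{\simeq} \refl\La$. Independently, the classical Morita equivalence $\Hom_\Si(M,-)\colon \add M \xrightarrow{\simeq} \proj\La$ identifies $\mathcal{F} \simeq \proj\La$ under the same functor. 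Comparing the two, we deduce $\refl\La = \proj\La$ inside $\mod\La$. Combined with the identification $\refl\La = \Om^2 \mod\La$ from Lemma \ref{EX-ker} (available under the $(2,2)$-condition), every module $X \in \mod\La$ satisfies $\Om^2 X \in \proj\La$, forcing $\pd_\La X \leq 2$ and hence $\gldim\La \leq 2$.

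For the direction $(\mathrm{ii}) \Rightarrow (\mathrm{i})$: given $\La$ satisfying both conditions, Theorem \ref{max} and Lemma \ref{EX-ker} give $\refl\La = \Om^2\mod\La$, and the hypothesis $\gldim\La \leq 2$ forces $\refl\La = \proj\La$. To realize this as a torsion-free class in a module category, I would invoke tilting theory: the $(2,2)$-condition via Proposition \ref{TorI} ensures that the initial terms $I^0, I^1$ of the minimal injective resolution of $\La$ have projective dimension at most one, so a suitably chosen tilting module $T$ built from these (with $\pd_\La T \leq 1$ and $\Ext^{>0}_\La(T,T)=0$) yields $\Si := \End_\La(T)^{\op}$ together with a Brenner--Butler-type equivalence $\Hom_\La(T,-)\colon \Gen T \xrightarrow{\simeq} \mathcal{F}$ with $\mathcal{F} \subset \mod\Si$ a torsion-free class. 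The global dimension bound then ensures that $\proj\La \subset \Gen T$ is carried onto the whole of $\mathcal{F}$.

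The mutual inverseness follows from routine round-trip computations: starting from $\La$, one recovers $\End_\La(\La)^{\op} = \La$ since $\La$ is the natural additive generator of $\proj\La = \refl\La$; starting from $\C \simeq \mathcal{F}$ with generator $M$, one recovers $\add M = \mathcal{F}$. The main technical obstacle is the direction $(\mathrm{ii}) \Rightarrow (\mathrm{i})$: explicitly constructing the tilting module $T$ and verifying that $\Hom_\La(T,-)$ sends $\proj\La$ precisely onto a functorially finite torsion-free class in $\mod\Si$, rather than merely a proper subcategory. This is the step that uses the global dimension bound essentially (not just the $(2,2)$-condition), since one needs kernels of maps between $\La$-projectives to remain projective in order for the image to be closed under subobjects.
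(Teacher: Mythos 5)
The paper itself gives no argument for this statement: it is quoted from \cite{Iy05}, and the surrounding text even uses \ref{Ators} to deduce the quasi-abelianness of torsion-free classes of the form $\add M$. So your proposal has to be judged on its own. Your direction (i)$\Rightarrow$(ii) is essentially correct and is a nice ``internal'' argument via the paper's own machinery: after making explicit the (necessary) implicit assumption that the torsion-free class has an additive generator $M$, closure under subobjects and extensions gives quasi-abelianness, $M$ is trivially a generator-cogenerator of $\add M$, and \ref{Morita} yields the two-sided $(2,2)$-condition together with $\refl\La\simeq\add M\simeq\proj\La$; combined with $\refl\La=\Om^2$ from \ref{EX-ker} this forces $\gldim\La\leq2$. (The quasi-abelianness of a torsion-free class deserves at least a line or a reference, but it is standard.)

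The genuine gap is the direction (ii)$\Rightarrow$(i), i.e.\ realizing $\refl\La=\proj\La$ as a torsion-free class in some $\mod\Si$, and the tilting-theoretic route you sketch does not work as stated. First, $I^0\oplus I^1$ is in general not a tilting module: it usually has fewer indecomposable summands than $\La$ has simples, and there is no coresolution $0\to\La\to T^0\to T^1\to 0$ with $T^i\in\add(I^0\oplus I^1)$ (already for Auslander algebras, $\Omega^{-1}\La$ is not projective-injective); ``a suitably chosen tilting module built from these'' is not a construction, and a Bongartz completion introduces uncontrolled summands. Second, even granting a tilting module $T$ with $\pd T\leq1$ and $\Si=\End_\La(T)$, Brenner--Butler identifies $\Fac T$ with the torsion-free class $\mathcal{Y}(T)\subset\mod\Si$; note $\La$ need not lie in $\Fac T$, so even ``$\proj\La\subset\Fac T$'' is unjustified, and if it does, the image of $\proj\La$ is merely $\add\Hom_\La(T,\La)$, a subcategory of $\mathcal{Y}(T)$ --- nothing in your argument shows it is closed under submodules and extensions in $\mod\Si$, which is exactly the content of the theorem and is where the $(2,2)$-condition has to enter. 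Reading your claim literally (the image of $\proj\La$ is all of $\mathcal{Y}(T)$) is impossible outside degenerate cases: $\Hom_\La(T,-)$ is an equivalence on $\Fac T$, so this would force $\Fac T=\add\La$, hence $\La$ semisimple, since $\Fac T$ is closed under quotients. What is needed instead is a direct construction of $\Si$ from the data $I^0\oplus I^1$ (for instance as an endomorphism algebra of $I^0\oplus I^1$, with $\proj\La$ embedded by an exact functor such as $-\otimes_\La D(I^0\oplus I^1)$), together with proofs that this functor is fully faithful on $\proj\La$ and that its essential image is closed under submodules and extensions; these verifications, which use both $\gldim\La\leq2$ and the two-sided $(2,2)$-condition, are precisely what your proposal leaves open.
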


Applying \ref{max} to such Aulander algebras of torsion(-free) classes, we get the following which is a part of \cite[2.1]{Iy05}.
\begin{Prop}
Let $\Si$ be a finite dimensional algebra and let $M\in\mod\Si$ such that $\add M\subset\mod\Si$ is a torsion-free class. Then $\add M$ is quasi-abelian.
\end{Prop}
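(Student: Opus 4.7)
The plan is to combine the Auslander correspondence for torsion-free classes \ref{Ators} with the main result \ref{max}. Let $\Si$ and $M\in\mod\Si$ be as in the statement, and set $\La:=\End_\Si(M)$. Since $\add M$ is a torsion-free class, \ref{Ators} (together with its proof via $(\Si,M)\mapsto\End_\Si(M)$, in parallel to \ref{MT}) tells us that $\La$ is a finite dimensional algebra with $\gldim\La\leq2$ satisfying the two-sided $(2,2)$-condition.

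Applying \ref{max} to $\La$, the category $\refl\La$ is quasi-abelian. Since being quasi-abelian is an intrinsic property preserved by equivalences of additive categories, it now suffices to exhibit an equivalence $\add M\simeq\refl\La$. The standard functor
\[ \Hom_\Si(M,-)\colon\mod\Si\longrightarrow\mod\La \]
restricts to an equivalence $\add M\xsimeq\proj\La$ onto the category of finitely generated projective $\La$-modules, so the remaining task is to check the equality $\refl\La=\proj\La$ inside $\mod\La$.

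For this, the inclusion $\proj\La\subset\refl\La$ is trivial. In the other direction, the $(2,2)$-condition on $\La^\op$ gives $\sgrade\Ext^2_\La(X,\La)\geq2$, hence in particular $\grade\Ext^2_\La(X,\La)\geq1$, for every $X\in\mod\La$. By \ref{EX-ker}, this yields $\refl\La=\Om^2$. On the other hand, $\gldim\La\leq2$ forces $\Om^2\subset\proj\La$, so $\refl\La=\proj\La$ as required.

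The main point to keep track of is the interplay between the two outputs of the Auslander correspondence: the two-sided $(2,2)$-condition is what feeds into \ref{max} to guarantee that $\refl\La$ is quasi-abelian, while the global dimension bound $\gldim\La\leq2$ is what collapses $\refl\La$ down to $\proj\La$ and thereby identifies it with $\add M$. Once these two ingredients are in hand, no further obstacle remains.
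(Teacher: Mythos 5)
Your proposal is correct and follows the same route as the paper: pass to $\La=\End_\Si(M)$, invoke \ref{Ators} to get $\gldim\La\leq2$ and the two-sided $(2,2)$-condition, identify $\add M\simeq\proj\La=\refl\La$, and conclude by \ref{max}. You merely spell out the equality $\refl\La=\proj\La$ in more detail than the paper does (and in fact the inclusion $\refl\La\subset\Om^2$ holds for any Noetherian ring, so \ref{EX-ker} is not strictly needed there).
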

\begin{proof}
	Let $\La=\End_\Si(M)$. By \ref{Ators}, the algebra $\La$ has global dimension $\leq 2$ and satisfies the two-sided $(2,2)$-condition. We have $\refl\La\simeq\proj\La\simeq\add M$ by $\gldim\La\leq2$, and this category is quasi-abelian by \ref{max}.
\end{proof}

In view of the above results, it would be interesting to ask the following.
\begin{Qs}
Let $\La$ be a finite dimensional algebra satisfying the two-sided $(2,2)$-condition. Does there exist a finite dimensional algebra $\Si$ such that category $\refl\La$ has a nice representation theoretic meaning in terms of $\Si$?
\end{Qs}
When $\La$ has dominant dimension $\geq2$, then $\refl\La$ is nothing but a module category over some $\Si$ by \ref{mt}. Also, when $\La$ has global dimension $\leq2$, then $\refl\La$ is a torsion(-free) class in a module category over some $\Si$ by \ref{Ators}. The above question asks whether one can find such a $\Si$ for general $\La$.

%
%

\section{Non-maximum exact structures}
Let $\La$ be a Noetherian ring satisfying the two-sided $(2,2)$-condition. We have seen in \ref{max} that the category $\refl\La$ is quasi-abelian, so it has a maximum exact structure. The aim of this section is to give a systematic construction of exact structures on $\refl\La$.

For a Noetherian ring $\La$ we set
\[ \D:=\{ X\in\mod\La \mid \sgrade X\geq2\}, \]
which is a Serre subcategory of $\mod\La$ (see \ref{sgrade}).
\begin{Thm}\label{subset}
Let $\La$ be a Noetherian ring satisfying the two-sided $(2,2)$-condition. Consider the following two classes.
\begin{enumerate}
\renewcommand\labelenumi{(\roman{enumi})}
\renewcommand\theenumi{\roman{enumi}}
\item\label{ex} Exact structures on $\refl\La$.
\item\label{se} Serre subcategories of $\mod\La$ contained in $\D$.
\end{enumerate}
Then we have well-defined correspondences given as follows:
\begin{itemize}
\item From {\rm(\ref{ex})} to {\rm(\ref{se})}: For an exact structure $\S$ (the class of conflations), assign the subcategory $\{ \Coker g\in\mod\La \mid L\xrightarrow{f}M\xrightarrow{g}N \text{ is in } \S\}$.
\item From {\rm(\ref{se})} to {\rm(\ref{ex})}: For a Serre subcategory $\A\subset\D$, declare the sequence $L\xrightarrow{f}M\xrightarrow{g}N$ in $\refl\La$ to be a conflation if and only if $0\to L\xrightarrow{f}M\xrightarrow{g}N$ is exact in $\mod\La$ and $\Coker g\in\A$.
\end{itemize}
Moreover, the composite {\rm(\ref{se})} $\to$ {\rm(\ref{ex})} $\to$ {\rm(\ref{se})} is the identity on the class {\rm(\ref{se})}.
\end{Thm}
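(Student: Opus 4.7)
My plan is to handle the three items of the statement in order: well-definedness of the map (ii)$\to$(i), well-definedness of (i)$\to$(ii), and the identity composite (ii)$\to$(i)$\to$(ii).

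For well-definedness of (ii)$\to$(i), Proposition \ref{confl} already guarantees that every sequence in $\S_\A$ is a kernel-cokernel pair in $\refl\La$ (the hypothesis $\A\subseteq\D$ ensures this), so the task reduces to verifying that the exact-category axioms restrict to $\S_\A$. I would proceed by checking that the defining condition---``the deflation cokernel in $\mod\La$ lies in $\A$''---is preserved under the basic operations. Push-out along $L\to L'$ preserves the right term $N$ and the image $\Im g$, hence the deflation cokernel verbatim; pull-back along $c\colon N'\to N$ yields a new deflation cokernel $N'/c^{-1}(\Im g)$ that embeds via $c$ into $\Coker g$, so closure of $\A$ under subobjects handles this axiom. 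For compositions of inflations I would use a $3\times 3$-diagram in the maximum exact structure together with a snake-lemma argument identifying the new deflation cokernel as an extension of a subobject of one old cokernel by the other, invoking closure of $\A$ under subobjects and extensions.

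For well-definedness of (i)$\to$(ii), the containment $\A_\S\subseteq\D$ is immediate from Lemma \ref{adm}(2). The Serre property requires three arguments. Closure under subobjects: given $K=\Coker g$ from a deflation $g\colon M\to N$ in $\S$ and $K'\subseteq K$, pick a projective $P\in\refl\La$ with a surjection onto $K'$ and lift to $c\colon P\to N$; the pull-back of the conflation along $c$ lies in $\S$ and has cokernel $\Im(P\to K)=K'$. Closure under quotients uses a composition trick: given $K\twoheadrightarrow K''$ with kernel $K'$, take $C'\subseteq N$ to be the preimage of $K'$ and choose a projective cover $c\colon P\twoheadrightarrow C'$; the conflation $P\to N\oplus P\xrightarrow{\phi} N$ with $\phi(n,p)=n-c(p)$ is split and hence in $\S$, and composing $g\oplus\id_P\colon M\oplus P\to N\oplus P$ with $\phi$ yields a deflation in $\S$ whose image is $C'$ (hence cokernel $K''$). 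Closure under extensions is handled by an analogous composition argument combining the conflations realizing $K_1$ and $K_2$.

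Finally, the identity composite (ii)$\to$(i)$\to$(ii): the inclusion $\A_{\S_\A}\subseteq\A$ is by definition of $\S_\A$. For the reverse, given $C\in\A\subseteq\D$, the Remark following Proposition \ref{confl} writes $C\simeq\Ext^2_{\La^\op}(\Tr T,\La)$ for some torsion-free $T$. Taking a projective cover $M\twoheadrightarrow T$ with $L=\Om T$, Lemma \ref{EX-ker} gives $L\in\Om^2=\refl\La$; the sequence $L\to M\to T^{\ast\ast}$ is a conflation in the maximum exact structure on $\refl\La$ with deflation cokernel $T^{\ast\ast}/T=C\in\A$, so lies in $\S_\A$, witnessing $C\in\A_{\S_\A}$. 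The most delicate step is the Serre property of $\A_\S$; the key insight is the composition trick described above, which lets us realize any prescribed quotient of a given deflation cokernel by combining the original deflation with a split deflation coming from a projective cover of the desired enlargement of the image in $N$.
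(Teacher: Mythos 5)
The parts of your proposal that work are genuinely nice: the sub-closure argument is the paper's own, and your split-deflation composition trick for quotient-closure (realizing the enlarged image $\Im g+C'$ as the image of the composite deflation $(1,-c)\circ(g\oplus\mathrm{id}_P)$) is correct and in fact slicker than the paper's route, which goes through Lemma \ref{proj} and a delicate verification that a certain square is a push-out in $\refl\La$. The composite-identity step is also fine. However, there are two problems. The serious one is extension-closure of $\A_\S$, which you dismiss as ``an analogous composition argument''. It is not analogous: composing deflations $M\xrightarrow{g}N\xrightarrow{h}N'$ produces a cokernel that is an extension of $\Coker h$ by a quotient of $\Coker g$, but to realize a \emph{prescribed} extension $E$ of $K_2$ by $K_1$ you must first produce a surjection from the target $N'$ onto $E$ lifting the cokernel map $N'\to K_2$, and then a deflation in $\S$ whose image is exactly the kernel of that surjection; both steps are lifting problems along surjections and require projectivity of the modules involved. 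Your available moves (pull-back, image enlargement by a projective summand, direct sums, compositions) only yield subobjects, quotients and direct sums of known cokernels, together with extensions whose existence hinges on that lifting --- and an extension of $K_2$ by $K_1$ is in general neither a subobject nor a quotient of $K_1\oplus K_2$. This is precisely the point of the paper's Lemma \ref{proj} (every $A\in\A(\S)$ is the cokernel of a conflation between projectives) combined with a horseshoe-lemma argument, and that idea is missing from your proposal.

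The second, more local, problem is the push-out step in the direction (ii)$\to$(i): your claim that push-out ``preserves the image $\Im g$, hence the deflation cokernel verbatim'' is false. The push-out in $\refl\La$ is the reflexive hull of the push-out in $\mod\La$ (see diagram (\ref{eq9}) in the proof of \ref{max}), and passing to the hull can strictly enlarge the image of the new deflation inside $N$; this is exactly why the paper's proof of \ref{se to ex} has to produce the surjection $\Ext^2_{\La^\op}(\Tr X,\La)\twoheadrightarrow\Ext^2_{\La^\op}(\Tr Z,\La)$. What is true, and what saves your argument with a one-line fix, is that $\Im g\subseteq\Im g'$, so the new cokernel is a quotient of the old one and lies in $\A$ by quotient-closure. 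Relatedly, in the inflation-composition step your sketch omits the place where the two-sided $(2,2)$-condition actually enters, namely the exactness of $0\to X^{\aast}\to Y^{\aast}\to Z^{\aast}$ needed before the snake-type comparison; as a sketch it is consistent with the paper's argument, but that is the nontrivial input.
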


Let us show that the correspondence from (\ref{se}) to (\ref{ex}) is well-defined. 

\begin{Prop}\label{se to ex}
Let $\A\subset\D$ a Serre subcategory of $\mod\La$. Then the class
\[ \S(\A):=\{ L\xrightarrow{f} M\xrightarrow{g} N \text{ in }\refl\La \mid 0\to L\xrightarrow{f}M\xrightarrow{g}N \text{ is exact in } \mod\La \text{ and } \Coker g\in \A\} \]
forms an exact structure on $\refl\La$.
\end{Prop}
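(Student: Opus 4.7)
Since $\A\subset\D$, every member of $\S(\A)$ is already a conflation in the maximum exact structure on $\refl\La$ by \ref{confl}. The strategy is therefore to verify the axioms of \ref{exdef} for $\S(\A)$, reusing the structural work (existence of push-outs, pull-backs, admissibility of compositions) from \ref{max} and only checking that the cokernel of the new deflation remains in $\A$. The Serre property of $\A$ in $\mod\La$, namely closure under subobjects, quotients, and extensions, will be exactly what is needed.

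Axiom (\ref{EX1}) is immediate from $0\in\A$. For axiom (\ref{EX3}) I reuse the constructions in the proof of \ref{max}. For the pull-back of an admissible epi with cokernel $Y\in\A$, the new cokernel is a submodule $Z\subset Y$ (directly as in the proof of \ref{max}), hence in $\A$. For the push-out of an admissible mono with cokernel-of-deflation $X^\aast/X\in\A$, the new cokernel-of-deflation $Z^\aast/Z$ fits in an exact sequence $0\to E\to X^\aast/X\to Z^\aast/Z\to 0$ read off from $0\to X\to Z\to E\to 0$ and $X^\aast\simeq Z^\aast$ established in the proof of \ref{max}; it is therefore a quotient of $X^\aast/X\in\A$. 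Axiom (\ref{EX2}) for composable admissible epis $g_1, g_2$ in $\S(\A)$ is similar: $\Coker(g_2 g_1)$ is an extension of $\Coker g_2\in\A$ by a quotient of $\Coker g_1\in\A$, hence in $\A$.

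The subtle case is axiom (\ref{EX2}) for composable admissible monos $f_1,f_2$ in $\S(\A)$. Writing $X_i=\Coker_{\mod\La} f_i$ (torsion-free, with $X_i^\aast/X_i\in\A$) and $X=\Coker_{\mod\La}(f_2 f_1)$, the snake lemma in $\mod\La$ gives a short exact sequence $0\to X_1\to X\to X_2\to 0$, and the goal is to show $X^\aast/X\in\A$. The plan is first to establish left-exactness of
\[ 0\to X_1^\aast\to X^\aast\to X_2^\aast. \]
Applying $\Hom_\La(-,\La)$ to $0\to X_1\to X\to X_2\to 0$ produces an obstruction lying in $\Ext^1_\La(X_2,\La)$; since $X_2$ is torsion-free, hence a first syzygy, this $\Ext^1$ coincides with $\Ext^2_\La(-,\La)$ of some module, which has $\sgrade\geq2$ by the $(2,2)$-condition on $\La$, and \ref{easy} then forces the required vanishings upon the second application of $\Hom_\La(-,\La)$. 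A snake-lemma comparison between $0\to X_1\to X\to X_2\to 0$ and $0\to X_1^\aast\to X^\aast\to Y\to 0$ (with $Y\subset X_2^\aast$ the image, which contains the image of $X_2\hookrightarrow X_2^\aast$ by commutativity) yields an exact sequence $0\to X_1^\aast/X_1\to X^\aast/X\to Y/X_2\to 0$; since $Y/X_2\subset X_2^\aast/X_2\in\A$, Serre closure gives $X^\aast/X\in\A$.

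This homological computation with $(-)^\aast$, balancing the failure of $\Hom_\La(-,\La)$-exactness against the two-sided $(2,2)$-condition, is the main obstacle; the remaining axioms are routine given \ref{max} and \ref{confl}.
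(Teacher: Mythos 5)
Your proposal is correct and follows essentially the same route as the paper's proof: verify Quillen's axioms directly, recycling the push-out and pull-back constructions from the proof of \ref{max}, with the only genuinely new computation being the composition of inflations, which you and the paper both handle by double-dualizing $0\to X_1\to X\to X_2\to 0$ (using that $\Ext^1_\La(X_2,\La)$ has $\sgrade\geq2$ because $X_2$ is torsion-free, hence a syzygy) and then applying Serre closure of $\A$ to the resulting sequence of cokernels of evaluation maps. The one point you should make explicit is that $X=\Coker(f_2f_1)$ is torsion-free, which is needed (cf.\ \ref{confl}, or the paper's characterization \ref{admA} of $\S(\A)$-inflations) for $L\to N\to X^\aast$ to be a conflation at all; this drops out of the kernel row of the very snake-lemma diagram you set up, since $\Ker(X\to X^\aast)$ embeds into $\Ker(X_2\to X_2^\aast)=0$.
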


We start with a preparation which is a generalization of \ref{adm}.
\begin{Lem}\label{admA}
Let $\A\subset\D$ be a Serre subcategory of $\mod\La$ and let $f\colon L\to M$ be a morphism in $\refl\La$.
\begin{enumerate}
\item There is a sequence $L\xrightarrow{f} M\to N$ in $\A(\S)$ if and only if $f$ is a monomorphism in $\mod\La$, $X:=\Coker f\in\mod\La$ is torsion-free, and satisfies $\Ext^2_{\La^\op}(\Tr X,\La)\in\A$.
\item There is a sequence $K\to L\xrightarrow{f} M$ in $\A(\S)$ if and only if $\Coker f\in\A$.
\end{enumerate}
\end{Lem}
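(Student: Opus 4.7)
The plan is to derive both parts by combining the existing characterization in \ref{adm} with the description of conflations in the maximum exact structure given in \ref{confl} and \ref{ses}, via the key observation that $\S(\A)$ is nothing but the sub-class of conflations whose cokernel in $\mod\La$ lies in the Serre subcategory $\A$. Since $\A\subset\D$, membership in $\A$ automatically implies $\sgrade\geq2$, so conflations in $\S(\A)$ are in particular conflations in the maximum exact structure.

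For part (1), I would first handle the forward direction. If $L\xrightarrow{f}M\xrightarrow{g}N$ lies in $\S(\A)$, then $\Coker g\in\A\subset\D$ forces $\sgrade\Coker g\geq2$, so by \ref{confl} the sequence is already a conflation in the maximum exact structure. Then \ref{ses} supplies the required features: $f$ is a monomorphism in $\mod\La$, the module $C:=\Coker f$ is torsion-free, and $g$ factors as $M\twoheadrightarrow C\hookrightarrow C^{\ast\ast}\xsimeq N$. Consequently $\Coker g\simeq C^{\ast\ast}/C$, which by the Auslander-Bridger sequence coincides with $\Ext^2_{\La^\op}(\Tr C,\La)$ (the competing term $\Ext^1_{\La^\op}(\Tr C,\La)$ vanishes because $C$ is torsion-free), yielding the condition $\Ext^2_{\La^\op}(\Tr X,\La)\in\A$ with $X=C$. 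For the reverse direction, starting from the three conditions on $f$ I would apply \ref{adm}(\ref{mono}) to obtain a conflation $L\xrightarrow{f}M\to N$ in the maximum exact structure with $N\simeq X^{\ast\ast}$, whose cokernel is $X^{\ast\ast}/X\simeq\Ext^2_{\La^\op}(\Tr X,\La)\in\A$ by hypothesis, placing the sequence in $\S(\A)$.

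Part (2) is essentially built into the definitions. The forward implication is immediate: any sequence in $\S(\A)$ has cokernel in $\A$ by construction. For the converse, the hypothesis $\Coker f\in\A\subset\D$ gives $\sgrade\Coker f\geq2$, so \ref{adm}(\ref{epi}) furnishes a kernel-cokernel pair $K\to L\xrightarrow{f}M$ in the maximum exact structure with $K=\ker f\in\refl\La$; this sequence lies in $\S(\A)$ since its cokernel is precisely $\Coker f\in\A$.

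The only step that is not a direct invocation of the preceding lemmas is the identification $C^{\ast\ast}/C\simeq\Ext^2_{\La^\op}(\Tr C,\La)$ in part (1), but this is a standard consequence of the Auslander-Bridger sequence together with torsion-freeness of $C$. No genuine obstacle is anticipated.
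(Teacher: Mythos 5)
Your proof is correct and takes essentially the same route as the paper: the forward direction of (1) amounts to recognizing the inclusion $X\hookrightarrow N$ as a reflexive hull, which you delegate to \ref{confl}/\ref{ses} where the paper argues it directly from $\sgrade \Coker g\geq 2$, while your converse is the paper's Auslander--Bridger argument verbatim and your treatment of (2) via \ref{adm} is the short verification the paper calls clear.
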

\begin{proof}
	(1)  If $f$ extends to a sequence $0\to L\xrightarrow{f} M\xrightarrow{g} N$ in $\A(\S)$, then $A:=\Coker g$ lies in $\A$. Consider the exact sequence $0\to X\to N\to A\to0$. This first shows that $X$, being a submodule of $N$, is torsion-free. Also, since $A\in\A\subset\D$ has $\sgrade\geq2$, we see that $X\to N$ is the reflexive hull, hence $A=\Ext^2_{\La^\op}(\Tr X,\La)$.
	
	Suppose conversely that $X=\Coker f$ is torsion-free and $\Ext^2_{\La^\op}(\Tr X,\La)\in\A$. Then the Auslander-Bridger sequence yields an exact sequence $0\to X\to X^\aast\to \Ext^2_{\La^\op}(\Tr X,\La)\to 0$, and we get a sequence $0\to L\xrightarrow{f}M\to X^\aast$ in $\S(\A)$.
	
	(2)  This is clear.
\end{proof}
	
Now we are able to prove well-definedness of the correspondence from (\ref{se}) to (\ref{ex}) in \ref{subset}.
\begin{proof}[Proof of \ref{se to ex}]
	We have to check the axioms in \ref{exdef}. 
	
	(i)  Clearly, $\S(\A)$ is closed under isomorphisms and contains split exact sequences.
	
	(ii)  We show that admissible monomorphisms are closed under compositions. Let $L\xrightarrow{f}M$ and $M\xrightarrow{g}N$ be admissible monomorphisms. By \ref{admA}(\ref{mono}), we know that $X:=\Coker f$ and $Z:=\Coker g$ are torison-free, and that $\Ext^2_{\La^\op}(\Tr X,\La), \Ext^2_{\La^\op}(\Tr Z,\La)\in\A$. If we put $Y:=\Coker(gf)$, the diagram
	\[ \xymatrix{
		0\ar[r]&L\ar[r]^-f\ar@{=}[d]&M\ar[r]\ar[d]^-g& X\ar[r]\ar[d]&0 \\
		0\ar[r]&L\ar[r]& N\ar[r]\ar[d]& Y\ar[r]\ar[d]&0\\
		&&Z\ar@{=}[r]&Z } \]
	shows $Y$ is also torsion-free. It remains to prove $\Ext^2_{\La^\op}(\Tr Y,\La)\in\A$. Dualizing the short exact sequence $0\to X\to Y\to Z\to 0$, we get $0\to X^\ast\to Y^\ast\to Z^\ast$ whose cokernel $C$ is a submodule of $\Ext^1_{\La}(X,\La)$. Since $X$ is torsion-free, we see that $\Ext^1_{\La}(X,\La)$ has $\sgrade\geq2$, hence $\grade C\geq2$. It follows that the doubly dualized sequence $0\to X^\aast\to Y^\aast\to Z^\aast$ is exact. Then the commutative diagram
	\[ \xymatrix{
		0\ar[r]& X\ar[r]\ar[d]& Y\ar[r]\ar[d]& Z\ar[r]\ar[d]& 0\\
		0\ar[r]& X^\aast\ar[r]& Y^\aast\ar[r]& Z^\aast } \]
	yields an exact sequence
	\[ \xymatrix{ \Ext^2_{\La^\op}(\Tr X,\La)\ar[r]&\Ext^2_{\La^\op}(\Tr Y,\La)\ar[r]&\Ext^2_{\La^\op}(\Tr Z,\La) }, \]
	which shows that the middle term $\Ext^2_{\La^\op}(\Tr Y,\La)$ also lies in $\A$.
	
	(ii)$^\op$ We next show that admissible epimorphisms are closed under compositions. If $f\colon L\to M$ and $g\colon M\to N$ are admissible epimorphisms, it is easy to see that we have an exact sequence $\Coker f\to \Coker(gf)\to \Coker g$. Then $\Coker f$ and $\Coker g$ lying in $\A$ implies $\Coker(gf)\in\A$.
	
	(iii)  We show that admissible monomorphisms are closed under push-outs. Let $f\colon L\to M$ be an admissible monomorphism and $g\colon M\to N$ an arbitrary morphism in $\refl\La$. As in the proof of \ref{max}, the push-out of $f$ along $g$ in $\refl\La$ is computed as the reflexive hull of the push-out in $\mod\La$, and we get the following diagram identical to (\ref{eq9}).
	\[ \xymatrix{
			0\ar[r]&L\ar[r]^-f\ar[d]_-g&M\ar[r]\ar[d]& X\ar[r]\ar@{=}[d]& 0\\
			0\ar[r]&N\ar[r]\ar@{=}[d]&Y\ar[r]\ar[d]&X\ar[r]\ar[d]& 0 \\
			0\ar[r]&N\ar[r]^-h&Y^\aast\ar[r]&Z\ar[r]&0 } \]
	We want to show that $h$ is an admissible monomorphism, which is to say, by \ref{admA}(\ref{mono}), that $Z$ is torsion-free and $\Ext^2_{\La^\op}(\Tr Z,\La)\in\A$. We know from the proof of \ref{max} that $Z$ is indeed torsion-free. Also, the last two rows yields an exact sequence $0\to X\to Z\to\Ext^2_{\La^\op}(\Tr Y,\La)\to 0$, hence $X^\aast\xsimeq Z^\aast$ by $\grade\Ext^2_{\La^\op}(\Tr Y,\La)\geq2$. Comparing these we get a surjection $\Ext^2_{\La^\op}(\Tr X,\La)\twoheadrightarrow \Ext^2_{\La^\op}(\Tr Z,\La)$, which implies the codomain is in $\A$ since the domain is.
	
	(iii)$^\op$  We finally prove that the admissible epimorphisms are closed under pull-backs, but this is done analogously to the proof of \ref{max}, see the last paragraph there.
\end{proof}

Let us turn to the converse correspondence.
\begin{Prop}\label{ex to se}
For an exact structure $\S$ on $\refl\La$, the category $\A(\S):=\{ \Coker g\in\mod\La \mid L\xrightarrow{f}M\xrightarrow{g}N \text{ is in } \S\}$ is a Serre subcategory of $\mod\La$.
\end{Prop}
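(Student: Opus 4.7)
The plan is to verify that $\A(\S)$ satisfies the three defining closure properties of a Serre subcategory of $\mod\La$: closure under subobjects, quotients, and extensions.

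First, I would observe that $\A(\S)\subseteq\D$. Indeed, every conflation in $\S$ is a kernel-cokernel pair in $\refl\La$, hence a conflation with respect to the maximum exact structure, and by \ref{adm}(\ref{epi}) the cokernel in $\mod\La$ of the corresponding deflation has strong grade at least $2$. Since $\D$ is itself a Serre subcategory of $\mod\La$ by \ref{sgrade}, it suffices to verify the three closure properties inside $\D$.

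For each property, the strategy is to manufacture a new conflation in $\S$ whose cokernel in $\mod\La$ realizes the desired object, using the axioms of the exact structure $\S$ (stability of deflations under pull-back, stability of inflations under push-out, closure under composition, and closure under direct sums) in combination with reflexive-hull constructions of the type already used throughout Section~3. For instance, for a submodule $C'\hookrightarrow C=\Coker g$ of a given deflation $g\colon M\to N$ in $\S$, I would take the preimage $\tilde N\subseteq N$ of $C'$, pass to its reflexive hull $\tilde N^{\aast}$, verify (as in the proof of \ref{max}) that the resulting morphism $\tilde N^{\aast}\to N$ is injective in $\mod\La$, and pull back $g$ along the induced inflation in $\refl\La$. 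A dual construction using push-outs along reflexive hulls handles quotients, and for an extension $0\to C'\to C\to C''\to 0$ with $C'=\Coker g'$ and $C''=\Coker g''$, one forms the pullback $E:=N''\times_{C''}C$ in $\mod\La$ and combines it with $g'$ via further pushouts to obtain a single deflation in $\S$ with cokernel $C$.

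The main obstacle I anticipate is that these pull-back/push-out constructions typically do not output the prescribed module as a cokernel on the nose: the cokernel of the pulled-back deflation in the subobject construction above is naturally an extension of $C'$ by the residual torsion piece $\tilde N^{\aast}/\tilde N\in\D$, and similarly for quotients. Resolving this requires establishing the three closure properties in a coordinated, interlocking way: once closure under extensions is established via the explicit pullback construction (exploiting the $\Ext$-vanishings supplied by \ref{easy}, which apply since elements of $\D$ have grade at least $2$ and modules of $\refl\La$ are second syzygies), the sub/quotient constructions yield cokernels in $\A(\S)$ that, via extension closure, force the desired objects themselves to lie in $\A(\S)$, since the residual $\D$-corrections have already been placed in $\A(\S)$ by the very constructions in question.
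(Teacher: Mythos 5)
Your reduction to the three closure properties, and the initial observation that $\A(\S)\subseteq\D$ via \ref{adm}, are fine; the problem is the mechanism you propose for closure under subobjects and quotients. As you yourself note, pulling back $g$ along $\tilde N^{\aast}\to N$ yields a conflation of $\S$ whose cokernel is not $C'$ but an extension $0\to C'\to \tilde N^{\aast}/\Im g\to \tilde N^{\aast}/\tilde N\to 0$, and your proposed repair of this mismatch is circular on two counts. First, there is no reason the correction term $\tilde N^{\aast}/\tilde N=\Ext^2_{\La^\op}(\Tr\tilde N,\La)$ lies in $\A(\S)$: it is the cokernel of a deflation of the \emph{maximum} exact structure, but $\S$ may be strictly smaller (this is the whole point of \ref{subset}), and nothing in your construction exhibits this module as the cokernel of an $\S$-deflation. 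Second, even if both the produced cokernel and the correction term were known to lie in $\A(\S)$, closure under extensions only builds larger objects out of smaller ones; to extract the prescribed submodule (resp.\ quotient) from that extension you would need precisely the closure under subobjects (resp.\ quotients) you are trying to establish. So the ``interlocking'' bootstrap does not close, and the argument has a genuine gap at its central step.

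The paper's proof avoids the mismatch altogether by pulling back along maps from \emph{projectives} rather than along reflexive hulls. For a submodule $A\subseteq B=\Coker g$, choose a surjection $P\to A$ with $P\in\proj\La$ and lift it through $N\to B$ to a map $P\to N$; the pull-back of the conflation $L\to M\to N$ along $P\to N$ is again in $\S$ by axiom (iii) of \ref{exdef}, and its cokernel in $\mod\La$ is exactly $A$ --- no hull correction appears, since $P$ is already reflexive and $P\to B$ has image precisely $A$. For quotients, one first replaces the given conflation by one of the form $L\to Q\to P$ with $P,Q\in\proj\La$ and the same cokernel (Lemma \ref{proj}), and then performs a push-out whose target is identified with a reflexive hull using $\sgrade\Ker(B\to C)\geq2$; closure under extensions then follows from \ref{proj} together with the horseshoe lemma, rather than from your pullback $N''\times_{C''}C$, which lands outside $\refl\La$ in general. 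If you want to salvage your outline, replace the ``reflexive hull of the preimage'' step by this projective-lift step; that single change removes the residual $\D$-correction and with it the need for the circular bootstrap.
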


We need the following lemma.
\begin{Lem}\label{proj}
Let $\S$ be an exact structure on $\refl\La$. Then for each $A\in\A(\S)$ there is a conflation $M\xrightarrow{f} Q\xrightarrow{g} P$ in $\S$ with $P,Q\in\proj\La$ such that $\Coker g=A$.
\end{Lem}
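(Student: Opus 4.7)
The plan is to produce the desired conflation in two steps: first replace the right term $N$ by a projective via a pull-back along a surjection from a projective resolution of $A$, then replace the middle term by a projective using a combination of the pull-back axiom, composition, and Keller's obscure axiom (Buhler's Proposition~2.16: if $gf$ is an admissible epic then so is $g$).

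\textbf{Step 1 (projective target).} By hypothesis, take a conflation $L\xrightarrow{f}M\xrightarrow{g}N$ in $\S$ with $\Coker g=A$ in $\mod\La$. Choose a projective $P$ and a surjection $\varepsilon\colon P\twoheadrightarrow A$; since $N\twoheadrightarrow A$ is the canonical surjection, projectivity of $P$ yields a lift $\pi\colon P\to N$ with $(N\twoheadrightarrow A)\circ\pi=\varepsilon$. Applying the pull-back axiom~(\ref{EX3}) to the deflation $g$ and the morphism $\pi$ produces a conflation $L\to M'\to P$ in $\S$ with $M'=M\times_N P$. A direct chase shows that the cokernel of $M'\to P$ in $\mod\La$ equals the image of $\pi$ composed with $N\twoheadrightarrow A$, which is $A$ by construction.

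\textbf{Step 2 (projective source).} Let $g'\colon M'\to P$ denote the new deflation. Choose a projective $Q$ together with a surjection $Q\twoheadrightarrow\ker(P\twoheadrightarrow A)=\Im g'$, and compose with inclusion to obtain a map $\delta\colon Q\to P$ with $\Coker\delta=A$ in $\mod\La$. The kernel $M:=\ker\delta$ is the kernel of a map between projective modules, hence a second syzygy, and is therefore reflexive by the two-sided $(2,2)$-condition (cf.\ \ref{EX-ker}). By \ref{confl}, the sequence $M\to Q\xrightarrow{\delta}P$ is automatically a conflation in the \emph{maximum} exact structure on $\refl\La$, and the task is to upgrade this to membership in $\S$.

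\textbf{Step 3 (upgrading to $\S$).} By projectivity of $Q$, lift the surjection $Q\twoheadrightarrow\Im g'$ through $M'\twoheadrightarrow\Im g'$ to a morphism $\tilde\alpha\colon Q\to M'$ with $g'\tilde\alpha=\delta$; after enlarging $Q$ by a projective summand $P_L$ with $P_L\twoheadrightarrow L$, we may assume $\tilde\alpha$ is surjective. Pulling back the deflation $g'$ along $\delta$ using axiom~(\ref{EX3}), one obtains a conflation $L\to M'\times_P Q\to Q$ in $\S$ which splits via the section $q\mapsto(\tilde\alpha(q),q)$; thus the first projection $\mu\colon M'\times_P Q\cong L\oplus Q\twoheadrightarrow M'$, $(l,q)\mapsto f(l)+\tilde\alpha(q)$ is well defined. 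Composing with $g'$ yields $g'\mu=\delta\circ p$, where $p\colon L\oplus Q\to Q$ is the canonical split projection. Since $p$ is a split deflation (hence in $\S$) and $g'$ is a deflation in $\S$, combining the composition axiom~(\ref{EX2}) with the obscure axiom applied to $\delta\circ p=g'\mu$ allows us to conclude that $\delta$ itself is a deflation in $\S$, giving the required conflation $M\to Q\xrightarrow{\delta}P$.

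\textbf{Main obstacle.} The delicate point is Step~3. The naive attempt to cover $M'$ by a projective and then compose with $g'$ fails because such a covering is not automatically a deflation in $\S$ (indeed, $\S$ need not contain every short exact sequence of reflexive modules). The resolution is to realize the composite $\delta\circ p=g'\mu$ via a pull-back of the known deflation $g'$ and invoke the obscure axiom to transfer admissibility from this composite back to $\delta$; making this transfer rigorous, in particular verifying that $\mu$ fits into a deflation identifiable from the axioms, is the technical heart of the argument.
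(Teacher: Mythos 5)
Your Step 1 is correct and is exactly the paper's first reduction (pull the given conflation back along a lift $P\to N$ of a projective cover of $A$), and Step 2 correctly produces a kernel--cokernel pair $M\to Q\xrightarrow{\delta}P$ in the maximum exact structure. The genuine gap is Step 3. The obscure axiom can only yield that $\delta$ is a deflation of $\S$ if some composite $\delta\circ q$ is \emph{already known} to be an admissible epic of $\S$; your candidate is $\delta\circ p=g'\mu$, and the only tool you offer for its admissibility is the composition axiom, which needs \emph{both} factors to be deflations. You know this for $g'$, but not for the first projection $\mu\colon M'\times_PQ\to M'$; and reading the composite as $\delta\circ p$ with $p$ a split deflation does not help, since that factorization only gives admissibility of the composite once $\delta$ is known to be a deflation --- exactly what you want to prove. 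Admissibility of $\mu$ is not a formal consequence of the axioms: $\mu$ is merely a surjection in $\mod\La$ between reflexive modules with reflexive kernel, i.e.\ a conflation of the \emph{maximum} structure, and an arbitrary exact structure $\S$ on $\refl\La$ need not contain such sequences (in the split exact structure, for instance, a surjection from a projective onto a non-projective reflexive module is never admissible). So the statement you would need about $\mu$ is of exactly the same nature as the statement about $\delta$ being proved, the argument is circular, and you concede as much in your ``main obstacle'' paragraph: the technical heart of the lemma is left open.

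For comparison, the paper shares your Step 1 verbatim and then handles the second stage differently: it covers $K$ (your $M'$) by a projective, $Q\twoheadrightarrow K$, and obtains the conflation $\Om^2A\to Q\to P$ by using that $Q\to P$ is the composite of $Q\twoheadrightarrow K$ with the deflation $K\to P$; that is, the admissibility of the covering $Q\twoheadrightarrow K$ is precisely what carries the paper's argument at the point where your proposal stalls. Your detour through the pull-back $M'\times_PQ$ and the obscure axiom does not circumvent this issue --- it relocates it to the unproven admissibility of $\mu$ --- so as written the proposal does not establish the lemma.
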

\begin{proof}
	If $A\in\A(\S)$ there is a conflation $L\to M\to N$ in $\refl\La$ which is part of an exact sequence $0\to L\to M\to N\to A\to 0$ in $\mod\La$. First, pick a surjection $P\to A$ with $P\in\proj\La$ and lift it to $P\to N$, which yields the commutative diagram below by taking a pull-back of $M\rightarrow N\leftarrow P$.
	\[ \xymatrix{
		0\ar[r]&L\ar[r]\ar@{=}[d]&K\ar[r]\ar[d]&P\ar[r]\ar[d]&A\ar@{=}[d]\ar[r]&0\\
		0\ar[r]&L\ar[r]&M\ar[r]&N\ar[r]& A\ar[r]&0 } \]
	Then the first row, being the pull-back of the conflation in the second row, is a conflation.
	
	Next, pick a surjection $Q\twoheadrightarrow K$ with $Q\in\proj\La$, which yields the following diagram.
	\[ \xymatrix{
		0\ar[r]&\Om^2A\ar[r]\ar[d]&Q\ar[r]\ar@{->>}[d]&P\ar[r]\ar@{=}[d]&A\ar@{=}[d]\ar[r]&0\\
		0\ar[r]&L\ar[r]&K\ar[r]&P\ar[r]& A\ar[r]&0 } \]
	Now, the map $Q\to P$ is a deflation since it is a composite $Q\to K\to P$ of deflations, and therefore the first row is also a conflation.
\end{proof}

\begin{proof}[Proof of \ref{ex to se}]
	We have to show that $\A(\S)$ is closed under submodules, quotient modules, and extensions.
	
	Let $B\in\A(\S)$ so that we have a conflation $L\to M\to N$ in $\refl\La$ such that $0\to L\to M\to N\to B\to 0$ is exact in $\mod\La$. Let $A\subset B$ be a submodule, take a surjection $P\to A$, and lift it to $P\to N$. Taking the pull-back $K$ of $M\rightarrow N\leftarrow P$, we get a commutative diagram below.
	\[ \xymatrix{
		0\ar[r]&L\ar[r]\ar@{=}[d]&K\ar[r]\ar[d]&P\ar[r]\ar[d]&A\ar[r]\ar@{^(->}[d]&0\\
		0\ar[r]&L\ar[r]&M\ar[r]& N\ar[r]& B\ar[r]&0 } \]
	Then the first row, being a pull-back of a conflation in the second row, is also a conflation, thus $A\in\A(\S)$.
	
	We next consider the quotients. Let again $B\in\A(\S)$. Then by \ref{proj} we may take a conflation $L\to P_1\to P_0$ in $\refl\La$ such that $0\to L\to P_1\to P_0\to B\to 0$ is exact in $\mod\La$ and $P_0,P_1\in\proj\La$. Let $B\twoheadrightarrow C$ be a quotient module. Let $X=\Ker(P_0\to B\to C)$ and take an exact sequence $0\to M\to Q\to X\to 0$ with $Q\in\proj\La$, which yields the following commutative diagram.
	\[ \xymatrix{
		0\ar[r]&L\ar[r]\ar[d]&P_1\ar[r]\ar[d]&P_0\ar[r]\ar@{=}[d]&B\ar[r]\ar@{->>}[d]&0\\
		0\ar[r]&M\ar[r]&Q\ar[r]&P_0\ar[r]& C\ar[r]& 0 } \]
	We claim that the leftmost square is a push-out in $\refl\La$, which shows that the second row is a conflation, hence $C\in\A(\S)$. Let us put $Y=\Im(P_1\to P_0)$, and let $Z$ be the push-out of $M\leftarrow L\rightarrow P_1$ in $\mod\La$. Then we have a natural map $Z\to Q$, and commutative diagrams below.
	\[
	\xymatrix{
		0\ar[r]&L\ar[r]\ar[d]&P_1\ar[r]\ar[d]& Y\ar[r]\ar@{=}[d]& 0\\
		0\ar[r]&M\ar[r]\ar@{=}[d]&Z\ar[r]\ar[d]&Y\ar[r]\ar[d]& 0  &&  0\ar[r]& Y\ar[r]\ar[d]& P_0\ar[r]\ar@{=}[d]& B\ar[r]\ar@{->>}[d]& 0\\
		0\ar[r]&M\ar[r]&Q\ar[r]&X\ar[r]&0  &&  0\ar[r]& X\ar[r]& P_0\ar[r]& C\ar[r]& 0 }
	\]
	The right commutative diagram shows that $Y\to X$ is injective whose cokernel is $\Ker(B\to C)$, hence in particular has $\sgrade\geq2$. Then the last two rows in the left diagram shows that $Z\to Q$ is injective whose cokernel has $\sgrade\geq2$. This implies that $Z\to Q$ is the reflexive hull, which gives our claim.
	
	It remains to prove that $\A(\S)$ is closed under extensions, but in view of \ref{proj} this is a straightforward consequence of the horseshoe lemma.
\end{proof}

Now we summarize the proof of \ref{subset}.
\begin{proof}[Proof of \ref{subset}]
	The maps are well-defined by \ref{se to ex} and \ref{ex to se}. To prove that the composite {\rm(\ref{se})} $\to$ {\rm(\ref{ex})} $\to$ {\rm(\ref{se})} is the identity, we have to check that every object in $\A$ appears as the cokernel of an $\S(\A)$-deflation. We can indeed take it, for example, as $0\to \Om^2A\to P_1\to P_0\to A\to 0$ for each $A\in\A$ with $P_0,P_1\in\proj\La$.
\end{proof}

The following example shows that the above correspondences are not bijections. In fact, while the class (\ref{ex}) is clearly invariant under the reflexive equivalence, the class (\ref{se}) does depend on it.
\begin{Ex}
Let $k$ be an algebraically closed field of characteristic $0$, and let $S=k[[x,y]]$. Let $G$ be a finite small subgroup of $\GL_2(k)$, and let $R=S^G$ the invariant ring. Then we have $S\ast G\xsimeq\End_R(S)$ and a reflexive equivalence
\[ \xymatrix{ \Hom_R(S,-)\colon \refl R\ar[r]^-\simeq&\refl S\ast G}. \]
Note that since $R$ and $S\ast G$ is of dimension $2$, the category $\D$ is nothing but the finite length modules over these rings, and the Serre subcategories of $\fl\La$ is classified by the subsets of simple $\La$-modules. 
\end{Ex}


\begin{Rem}
This should be compared with the results in \cite{E}, which gives a classification of exact sturctures on an additive category $\E$ in terms of its functor category $\mod\E$. Note that Theorem \ref{subset} above is for $\E=\refl\La$ is in terms of $\mod\La$, not $\mod\E$, so the results are independent in this sense.
\end{Rem}

\section{Morita theorem}
\subsection{Statements and examples}
We establish Morita theorem characterizing the category of reflexive modules among quasi-abelian categories. Let us prepare one notion on exact categories.
\begin{Def}
Let $\E$ be an exact category. An object $M\in\E$ is called
\begin{itemize}
\item a {\it generator} if for any $X\in\E$ there is an admissible epimorphism $N\to X$ with $N\in\add M$,
\item a {\it cogenerator} if for any $X\in \E$ there is an admissible monomorphism $X\to N$ with $N\in\add M$,
\item a {\it generator-cogenerator} if it is both a generator and a cogenerator.
\end{itemize}
\end{Def}

Our Morita theorem for quasi-abelian categories is the following.
\begin{Thm}\label{Morita}
Let $\C$ be a quasi-abeilan category. Suppose that there is a generator-cogenerator $M\in\C$ such that $\La:=\End_\C(M)$ is two-sided Noetherian. Then $\La$ satisfies the two-sided $(2,2)$-condition and we have an equivalence
\[ \xymatrix{ \Hom_\C(M,-)\colon\C\ar[r]^-\simeq&\refl\La }. \]
\end{Thm}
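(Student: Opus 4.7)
The plan is to construct the equivalence as $F := \Hom_\C(M,-) \colon \C \to \Mod\La$ and to deduce the two-sided $(2,2)$-condition at the end from Theorem~\ref{max}: once $F$ is shown to induce $\C \simeq \refl\La$, the quasi-abelian hypothesis on $\C$ transfers to $\refl\La$, and Theorem~\ref{max} supplies the $(2,2)$-condition. The basic setup is that $F$ restricts to the equivalence $\add M \xrightarrow{\sim} \proj\La$ given by $\La = \End_\C(M)$, and $F$ preserves kernels (as every $\Hom$-functor does).

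First I would check that $F$ takes values in $\refl\La$. For $X \in \C$, the cogenerator property together with closure of $\C$ under cokernels yields, by iteration, a sequence $X \to N_0 \to N_1$ with $N_i \in \add M$ and $X = \ker(N_0 \to N_1)$ in $\C$; applying $F$ gives $FX = \ker(FN_0 \to FN_1)$ in $\mod\La$ with $FN_i \in \proj\La$, so $FX \in \Om^2$. To upgrade this to $FX \in \refl\La$, I combine it with a generator presentation $M_0' \to M_0 \to X$ (with $X$ the cokernel in $\C$) and set $I := \Im(FM_0 \to FX)$, so that $I \cong \coker(FM_0' \to FM_0)$ in $\mod\La$. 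The crucial claim is that $\grade_\La(FX/I) \geq 2$, which then yields $\Hom_\La(FX, \La) \cong \Hom_\La(I, \La)$ and hence $FX \cong I^{\ast\ast} \in \refl\La$.

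Next, for fully faithfulness I apply $\Hom_\C(X,-)$ and $\Hom_\La(FX,-)$ to a cogenerator resolution $Y \to N_0^Y \to N_1^Y$ of $Y$ and compare kernels. This reduces the problem to the natural identification $\Hom_\C(X, N) \cong \Hom_\La(FX, FN)$ for $N \in \add M$; passing to direct summands, it suffices to treat $N = M$, i.e.\ $\Hom_\C(X, M) \cong \Hom_\La(FX, \La)$, which follows from the generator presentation of $X$ and the grade estimate on $FX/I$ from the previous paragraph. For essential surjectivity, given $N \in \refl\La$ (automatically a second syzygy), write $0 \to N \to P^0 \to P^1$ with $P^i \in \proj\La$, lift $P^0 \to P^1$ to a morphism $M^0 \to M^1$ in $\add M$ via the equivalence $\add M \simeq \proj\La$, and take its kernel $X \in \C$; then $FX = \ker(FM^0 \to FM^1) = N$ by construction.

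The main obstacle I anticipate is establishing $FX \in \refl\La$ without presupposing the $(2,2)$-condition: every reflexive is a second syzygy, but the converse requires $(2,2)$, so the argument cannot merely invoke $\refl\La = \Om^2$. Instead it must exploit the bi-presentation of $X$ coming from $M$ being simultaneously a generator and a cogenerator, together with a direct grade bound on the discrepancy $FX/I$ between the cogenerator-derived and generator-derived models of $FX$.
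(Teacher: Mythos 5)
Your overall frame --- full faithfulness by reduction to $\add M$, essential surjectivity by taking the kernel in $\C$ of the map of $\add M$ corresponding to $P^0\to P^1$ in $0\to L\to P^0\to P^1$, and deducing the two-sided $(2,2)$-condition only at the end via \ref{max} --- agrees with the paper. The genuine gap sits exactly where you flag it: the ``crucial claim'' $\grade_\La(FX/I)\geq 2$ is never proved, and even if granted, your deduction does not close. From $\grade_\La(FX/I)\geq2$ you get $(FX)^\ast\cong I^\ast$ and hence $(FX)^{\ast\ast}\cong I^{\ast\ast}$, but this says nothing about surjectivity of the evaluation map $FX\to (FX)^{\ast\ast}$, and duals (so in particular double duals such as $I^{\ast\ast}$) need not be reflexive over a general Noetherian ring --- that is governed by precisely the grade conditions of \ref{EX-ker} and \ref{EX-coker} that you are not allowed to assume yet. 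In fact ``$FX$ is a second syzygy and admits a submodule $I$ with $\grade(FX/I)\geq2$'' cannot imply reflexivity: taking $I=FX$ satisfies both conditions for an arbitrary second syzygy. There is a second unjustified step: the identification $I\cong\coker(FM_0'\to FM_0)$ requires $FM_0'\to F(\Ker(M_0\to X))$ to be surjective, and $\Hom_\C(M,-)$ applied to a deflation is in general not surjective, so an arbitrary generator presentation does not yield this.

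The paper closes this hole with Lemma \ref{refv}: $\Hom_\C(M,X)^\ast\cong\Hom_\C(X,M)$ and $\Hom_\C(X,M)^\ast\cong\Hom_\C(M,X)$, which simultaneously gives $FX\in\refl\La$ and the identification $\Hom_\La(FX,\La)\cong\Hom_\C(X,M)$ that your full-faithfulness step also needs. Its proof is where the Noetherian hypothesis does real work before the final appeal to \ref{max}: coherence of $\La$ makes $\add M$ contravariantly and covariantly finite (\ref{funf}), so the deflation $M_0\to X$ and the one onto its kernel can be chosen to be right $\add M$-approximations (\ref{zensha}); only then is $FM_1\to FM_0\to FX\to 0$ exact, and dualizing it against $\Hom_\C(-,M)$ applied to $M_1\to M_0\to X$ yields the duality. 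Your plan uses neither approximations nor Noetherianity at this stage, so the central step is missing rather than merely compressed. (Your reduction of full faithfulness via a coresolution of $Y$, instead of the paper's presentation of $X$ plus the five lemma, is fine, and your essential surjectivity argument is the same as the paper's.)
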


Consequently we obtain the following characterization of reflexive equivalence. We refer to \cite[2.4]{IR} for an analogous result for module-finite algebras over commutative Noetherian normal domains.
\begin{Thm}\label{refeq}
Let $\La$ be a Noetherian ring satisfying the two-sided $(2,2)$-condition. The following are equivalent for a Noetherian ring $\Ga$.
\begin{enumerate}
\renewcommand{\labelenumi}{(\roman{enumi})}
\renewcommand{\theenumi}{\roman{enumi}}
\item\label{cateq} There is an equivalence of categories $\refl\La\simeq\refl\Ga$.
\item\label{gc} There is a generator-cogenerator $M\in\refl\La$ in the maximum exact structure such that $\Ga\simeq\End_\La(M)$.
\end{enumerate}
In this case, $\Ga$ also satisfies the two-sided $(2,2)$-condition.
\end{Thm}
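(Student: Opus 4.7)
The plan is to derive both implications essentially directly from the Morita-type Theorem \ref{Morita}, with no substantial new ingredient beyond \ref{max}.

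For (\ref{gc})$\Rightarrow$(\ref{cateq}), I would apply Theorem \ref{Morita} to the quasi-abelian category $\C:=\refl\La$ (quasi-abelian by \ref{max} and the assumption on $\La$) with the generator-cogenerator $M$. Since $\End_\C(M)=\End_\La(M)\simeq\Ga$ is two-sided Noetherian by hypothesis, that theorem simultaneously yields both that $\Ga$ satisfies the two-sided $(2,2)$-condition and that $\Hom_\La(M,-)\colon\refl\La\to\refl\Ga$ is an equivalence, which is (\ref{cateq}) together with the final sentence.

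For (\ref{cateq})$\Rightarrow$(\ref{gc}), given an equivalence $F\colon\refl\Ga\xrightarrow{\simeq}\refl\La$, I would first transport the quasi-abelian structure across $F$: being quasi-abelian only concerns the existence and push-out/pull-back behaviour of kernels and cokernels, all intrinsic to the additive category, so $\refl\Ga$ is quasi-abelian, and Theorem \ref{max} forces $\Ga$ to satisfy the two-sided $(2,2)$-condition (which also establishes the final sentence of the theorem in this direction). Next, I would verify that $\Ga\in\refl\Ga$ is a generator-cogenerator in the maximum exact structure. For any $N\in\refl\Ga$, the surjection $\Ga^n\twoheadrightarrow N$ arising from finite generation has zero cokernel and is thus a deflation by \ref{adm}(\ref{epi}). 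Conversely, choosing a presentation $\Ga^m\to\Ga^n\to N^\ast\to 0$ of $N^\ast\in\mod\Ga^\op$ and dualising yields an exact sequence $0\to N\to\Ga^n\to\Ga^m$ (using $N\simeq N^{\ast\ast}$) whose cokernel embeds into the projective module $\Ga^m$, hence is torsion-free, making the inclusion an inflation by \ref{adm}(\ref{mono}). Setting $M:=F(\Ga)\in\refl\La$, the fact that the maximum exact structure is categorically intrinsic means $F$ transports the generator-cogenerator property to $M$, and it induces a ring isomorphism $\End_\La(M)\simeq\End_\Ga(\Ga)\simeq\Ga$ in the convention used in Theorem \ref{Morita}.

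The only genuine care needed is to match the endomorphism-ring convention so that $\End_\La(F(\Ga))\simeq\Ga$ rather than its opposite; this is automatic in the convention of Theorem \ref{Morita}, where $\Hom_\C(M,-)$ is designed to land in $\refl(\End_\C M)$. I do not anticipate any significant obstacle, since the heavy lifting has already been absorbed into Theorems \ref{max} and \ref{Morita}.
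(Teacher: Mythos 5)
Your proposal is correct and follows essentially the same route as the paper: (\ref{gc})$\Rightarrow$(\ref{cateq}) is an immediate application of \ref{Morita}, (\ref{cateq})$\Rightarrow$(\ref{gc}) takes $M$ to be the image of $\Ga\in\refl\Ga$ under the equivalence, and the final assertion follows from \ref{max} since quasi-abelianness (and the maximum exact structure) is intrinsic. The only difference is that you spell out, via \ref{adm}, the verification that $\Ga$ is a generator-cogenerator in $\refl\Ga$, which the paper states as obvious.
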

\begin{proof}
	(\ref{cateq})$\Rightarrow$(\ref{gc})  Since $\Ga\in\refl\Ga$ is certainly a generator-cogenerator, we may set $M\in\refl\La$ to be the object corresponding to $\Ga\in\refl\Ga$.
	
	(\ref{gc})$\Rightarrow$(\ref{cateq})  This is a consequence of \ref{Morita}.
	
	The last assertion follows from \ref{max} as it shows the two-sided $(2,2)$-condition is preserved under reflexive equivalences.
\end{proof}

Since one can freely add summands for generator-cogenerators, we have the following consequence.
\begin{Ex}
Let $\La$ be a Noetherian rings satisfying the two-sided $(2,2)$-condition. Then for any $M\in\refl\La$ the module $\La\oplus M\in\refl\La$ is also a generator-cogenerator. It follows that whenever $\Ga=\End_\La(\La\oplus M)$ is Noetherian, we have a reflexive equivalence
\[ \xymatrix{ \refl\La\ar[r]^-\simeq& \refl\Ga }. \]
In particular, $\Ga$ also satisfies the two-sided $(2,2)$-condition.
\end{Ex}

Let us also discuss how to explain \cite[2.4]{IR} in our context.
Let $R$ be a commutative Noetherian normal domain, and let $\La$ be a symmetric $R$-algebra (that is, $\La\simeq\Hom_R(\La,R)$ in $\Mod\La\otimes_R\La^\op$). We say $M\in\refl\La$ is a {\it height $1$ progenerator} if $M_\p\in\mod\La_\p$ is a progenerator for all $\p\in\Spec R$ of height $1$. 
\begin{Prop}[{\cite[2.4]{IR}}]
Let $M\in\refl\La$ be a height $1$ progenerator and $\Ga=\End_\La(M)$. Then the functor $\Hom_\La(M,-)\colon\refl\La\to\refl\Ga$ is an equivalence.
\end{Prop}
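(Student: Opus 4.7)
The plan is to deduce this from the Morita theorem \ref{Morita} applied to the pair $(\refl\La, M)$. For this we must verify that $\refl\La$ is quasi-abelian, that $\Ga = \End_\La(M)$ is two-sided Noetherian, and that $M$ is a generator-cogenerator of $\refl\La$ with respect to the maximum exact structure. Noetherianity of $\Ga$ is immediate since $M$ is a finite $R$-module. For the quasi-abelian property, the symmetric hypothesis $\La \simeq \Hom_R(\La, R)$ yields by adjunction a natural isomorphism $\RHom_\La(-, \La) \simeq \RHom_R(-, R)$ on $\mod\La$, and analogously on $\mod\La^\op$. Consequently the strong grade of any $X \in \mod\La$ computed over $\La$ coincides with that computed over $R$. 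Since $R$ is normal, it satisfies the two-sided $(2,2)$-condition (\ref{2-Gor}), and the identification above transports this to $\La$, so $\refl\La$ is quasi-abelian by \ref{max}.

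For the generator property, fix $X \in \refl\La$ and choose finitely many generators of $\Hom_\La(M, X)$ as a $\Ga$-module (finite generation holding because $\Hom_\La(M,X)$ is finite over $R$); these assemble into a morphism $f \colon M^n \to X$ in $\refl\La$. At any height $1$ prime $\p$ of $R$, progeneration of $M_\p$ over $\La_\p$ gives a Morita equivalence, so the evaluation $\Hom_{\La_\p}(M_\p, X_\p) \otimes_{\Ga_\p} M_\p \to X_\p$ is an isomorphism and $f_\p$ is surjective. Hence $\Coker f$ and each of its $R$-submodules are supported in codimension $\geq 2$; since $R$ is $(S_2)$ this forces grade over $R$ to be at least $2$, and the grade identification above then gives $\sgrade_\La \Coker f \geq 2$. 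By \ref{adm}(\ref{epi}), $f$ is an admissible epimorphism, so $M$ is a generator. The cogenerator property follows by duality: $M^\ast \in \refl\La^\op$ is again a height $1$ progenerator (localization commutes with $\Hom_\La(-, \La)$, and the progenerator property is preserved by $\Hom_{\La_\p}(-, \La_\p)$), so the same argument shows $M^\ast$ is a generator of $\refl\La^\op$; transporting through the additive duality $(-)^\ast$, which exchanges admissible monomorphisms and admissible epimorphisms between $\refl\La$ and $\refl\La^\op$, yields that $M \simeq M^{\ast\ast}$ is a cogenerator of $\refl\La$. Theorem \ref{Morita} now delivers the desired equivalence $\Hom_\La(M, -) \colon \refl\La \xrightarrow{\simeq} \refl\Ga$.

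The step requiring most care is the passage from \emph{$f_\p$ is surjective at every height $1$ prime $\p$} to \emph{$\sgrade_\La \Coker f \geq 2$}: it uses jointly the $(S_2)$-property of $R$ (to convert codimension $\geq 2$ into grade $\geq 2$ over $R$, stably under passing to submodules) and the symmetric structure of $\La$ (to transfer this grade bound from $R$ to $\La$). Once this is in hand, the only remaining subtlety is the routine verification that the additive duality $(-)^\ast$ is compatible with the maximum exact structures on $\refl\La$ and $\refl\La^\op$, which follows since any additive (contravariant) equivalence preserves kernel-cokernel pairs.
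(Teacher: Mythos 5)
Your overall strategy is the same as the paper's: reduce to \ref{Morita} by showing that a height $1$ progenerator is a generator-cogenerator, prove the generator property by localizing at height one primes and showing the cokernel of the approximation $M^n\to X$ has $\sgrade\geq2$, and get the cogenerator property from $M^\ast$ by duality. The gap is in the tool you use to justify both the quasi-abelianness of $\refl\La$ and the transfer of the grade bound: the adjunction isomorphism $\Hom_\La(X,\La)\cong\Hom_R(X,R)$ coming from $\La\simeq\Hom_R(\La,R)$ does \emph{not} upgrade to a natural isomorphism $\RHom_\La(-,\La)\simeq\RHom_R(-,R)$. Computing $\RHom_R(X,R)$ from a $\La$-projective resolution requires projective $\La$-modules to be $\Hom_R(-,R)$-acyclic, i.e. $\Ext^{>0}_R(\La,R)=0$, which is not part of the hypotheses (a symmetric $R$-algebra is reflexive over $R$ but need not be maximal Cohen-Macaulay). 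For instance, over $R=k[[x,y,z]]$ take $M_0=\Om^2k$ (reflexive of depth $2$), $B=R\ltimes M_0$ and $\La=B\ltimes\Hom_R(B,R)$: this is a symmetric $R$-algebra with $\Ext^1_R(\La,R)\supseteq\Ext^3_R(k,R)\neq0$ while $\Ext^1_\La(\La,\La)=0$, so already in degree one (strong) grade over $\La$ cannot be read off from grade over $R$ via your identification. Since this identification is your sole justification both for the two-sided $(2,2)$-condition on $\La$ (needed to invoke \ref{max}, \ref{adm} and \ref{Morita}) and for the step ``$\Coker f$ supported in codimension $\geq2$ implies $\sgrade_\La\Coker f\geq2$'', the two pillars of your argument are unsupported as written.

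The statements you need are nonetheless true, but they require a low-degree, codimension-one argument rather than a derived identification. For the transfer: if $N\in\mod\La$ is supported in codimension $\geq2$, choose $0\to K\to P\to N\to0$ with $P\in\proj\La$; then $\Ext^1_\La(N,\La)=\Coker(P^\ast\to K^\ast)$, the duals $P^\ast,K^\ast$ are reflexive $R$-modules (by the underived adjunction and normality of $R$), and the map $P^\ast\to K^\ast$ is injective and an isomorphism in codimension one, hence an isomorphism; so $\grade_\La N\geq2$, and applying this to every submodule gives $\sgrade_\La N\geq2$. For the $(2,2)$-condition: for $\height\p\leq1$ the localization $\La_\p$ is a symmetric algebra, free over the field or DVR $R_\p$ (because $\La$ is $R$-reflexive), hence self-injective resp.\ of injective dimension at most one on both sides, so $\Ext^2_{\La^\op}(X,\La)$ is supported in codimension $\geq2$ and the previous argument yields the required $\sgrade\geq2$. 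With these repairs your proof is correct and essentially coincides with the paper's (which proves the generator property via a right $\add M$-approximation and leaves the verification of the two-sided $(2,2)$-condition implicit).
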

In view of \ref{Morita}, this is a consequence of the following observation.
\begin{Prop}
Any height $1$ progenerator is a generator-cogenerator.
\end{Prop}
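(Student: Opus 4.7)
The plan is to show $M$ is a generator; the cogenerator property will then follow by applying the generator statement to $M^*$ and using that $(-)^* \colon \refl\La \leftrightarrow \refl\La^\op$ is an anti-equivalence of additive categories, hence of maximum exact structures, exchanging inflations and deflations. Note that $(M^*)_\p = (M_\p)^*$ is a progenerator of $\mod\La_\p^\op$ at every height $1$ prime $\p$ (Morita invariance), so $M^*$ is a height $1$ progenerator of $\refl\La^\op$.

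For the generator claim, I would fix $X \in \refl\La$ and pick a surjection $p \colon \La^k \twoheadrightarrow X$ in $\mod\La$. Choose finitely many right-$\La$-module generators $\phi_1, \ldots, \phi_n$ of $M^* = \Hom_\La(M,\La)$ (which is finitely generated as a right $\La$-module because $M^* \hookrightarrow \La^a$ via any surjection $\La^a \twoheadrightarrow M$), and assemble them into $\phi := (\phi_i) \colon M^n \to \La$. The candidate deflation is $p \circ \phi^k \colon M^{nk} \to X$. At each prime $\p$ of $R$ of height $\leq 1$, $M_\p$ is a progenerator (at height $0$ by further localization), so its trace satisfies $M_\p^* \cdot M_\p = \La_\p$; since the $\phi_{i,\p}$ generate $(M^*)_\p = M_\p^*$ as a right $\La_\p$-module, $\sum_i \phi_{i,\p}(M_\p) = M_\p^* \cdot M_\p = \La_\p$, so $\phi_\p$ is surjective and hence so is $(p\circ\phi^k)_\p$. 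Thus the cokernel $C := \Coker(p \circ \phi^k)$ satisfies $C_\p = 0$ at every prime $\p$ of height $\leq 1$.

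It remains to verify $\sgrade_\La C \geq 2$, which by \ref{adm}(\ref{epi}) makes $p \circ \phi^k$ a deflation. Because $\La$ is symmetric and hence $R$-projective, the isomorphism $\La \cong \Hom_R(\La, R)$ combined with tensor-hom adjunction applied to a $\La$-projective resolution gives $\Ext^i_\La(N, \La) \cong \Ext^i_R(N, R)$ for all $N \in \mod\La$; in particular $\sgrade_\La = \sgrade_R$ on $\mod\La$. For any submodule $C' \subseteq C$, $\Supp_R C'$ consists of primes of height $\geq 2$, and since $R$ is normal, hence $2$-Gorenstein by \ref{2-Gor}, the first two terms $I^0, I^1$ of the minimal injective resolution of $R$ are supported on primes of height $\leq 1$. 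Comparing associated primes yields $\Hom_R(C', I^0) = \Hom_R(C', I^1) = 0$, so $\grade_R C' \geq 2$, giving $\sgrade_\La C \geq 2$ as required.

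The main subtlety is the passage from the height-$1$ vanishing of $C$ to the global bound $\sgrade_\La C \geq 2$: the progenerator hypothesis controls cokernels only locally at height $1$, and it is the symmetric $R$-algebra structure that lets grade computations over $\La$ be carried out over the normal domain $R$, where the $2$-Gorenstein property of $R$ converts the height-$1$ information into the required global bound.
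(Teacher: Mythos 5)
Your overall plan (prove the generator statement, then apply it to $M^\ast$ over $\La^\op$ and dualize) is exactly the paper's, and the final step (a map from $\add M$ whose cokernel is supported in codimension $\geq 2$ is a deflation by \ref{adm}(\ref{epi})) is the right target. However, the trace step in your generator argument has a genuine gap. If $\phi_1,\dots,\phi_n$ generate $M^\ast$ as a right $\La$-module, the image of $\phi=(\phi_i)\colon M^n\to\La$ is only the left ideal $\sum_i\phi_i(M)$, and what the generator property of $M_\p$ gives is $\sum_i\phi_{i,\p}(M_\p)\La_\p=\La_\p$ (the two-sided trace ideal), not $\sum_i\phi_{i,\p}(M_\p)=\La_\p$; these differ in general. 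Concretely, take $\La=M_2(R)$ and $M=\La e_{11}$, a height $1$ progenerator. Then $M^\ast\cong e_{11}\La$ is generated as a right $\La$-module by the single element corresponding to the inclusion $\phi\colon \La e_{11}\hookrightarrow\La$, whose image is $\La e_{11}\neq\La$ after localizing at any prime, and its cokernel $\La e_{22}$ has grade $0$; so for $X=\La$ your candidate map is not a deflation and the asserted local surjectivity fails. The repair is easy: choose the $\phi_i$ to generate $M^\ast$ as an $R$-module (possible since $\La$, hence $M^\ast$, is module-finite over $R$); by centrality of $R$ one then has $\sum_i\phi_i(M)=\tr_\La(M)$, which localizes to $\La_\p$ at all height $\leq 1$ primes. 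Alternatively, argue as the paper does: take a right $(\add M)$-approximation $N\to X$ (it exists because $\Hom_\La(M,X)$ is a finitely generated module over the Noetherian ring $\End_\La(M)$), and observe that it localizes to a surjection at height $\leq1$ primes because $\La_\p\in\add M_\p$.

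Second, ``symmetric and hence $R$-projective'' is false: for instance $S\ast G\cong\End_R(S)$ over a singular $R=S^G$ is a symmetric $R$-algebra that is not projective over $R$. Consequently the isomorphisms $\Ext^i_\La(N,\La)\cong\Ext^i_R(N,R)$ for all $i$, and the equality $\sgrade_\La=\sgrade_R$, are not justified as stated. Fortunately only degrees $0$ and $1$ are needed: $\Hom_\La(N,\La)\cong\Hom_R(N,R)$ follows from adjunction using $\La\cong\Hom_R(\La,R)$ alone, and comparing the exact sequences obtained by applying these two functors to a $\La$-projective presentation of $N$ yields an embedding $\Ext^1_\La(N,\La)\hookrightarrow\Ext^1_R(N,R)$; combined with your correct observation that a module supported in codimension $\geq2$ over the normal ring $R$ has $\grade_R\geq2$, this gives $\sgrade_\La C\geq2$. (The paper asserts this transfer with the words ``since $R$ is normal,'' so here you are filling in a detail, but the justification must avoid the projectivity claim.) With these two repairs your proof goes through and is essentially the paper's argument, with an explicitly constructed map in place of the $(\add M)$-approximation.
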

\begin{proof}
	Suppose that $M\in\refl\La$ is height $1$ progenerator and we show that it is a generator-cogenerator. Let $X\in\refl\La$ and take a right $(\add M)$-approximation $f\colon N\to X$. Since approximations are preserved under localizations, the map $N_\p\to X_\p$ is a right $(\add M_\p)$-approximation, and hence is surjective since $M_\p\in\proj\La_\p$. It follows that $\Coker f\in\mod\La$ has codimension $\geq2$, and thus $\sgrade\geq2$ since $R$ is normal. Therefore, $f$ is an (admissible) epimorphism in $\refl\La$ by \ref{confl}, and hence $M\in\refl\La$ is a generator.
	Now, if $M$ is a height $1$ progenerator, then so is $M^\ast\in\mod\La^\op$ by $(M^\ast)_\p=(M_\p)^\ast$. It follows from the previous step that $M^\ast\in\refl\La^\op$ is a generator, and then by duality we deduce that $M\in\refl\La$ is a cogenerator.
\end{proof}
Note that the converse of the above implication does not hold. Indeed, let $R$ be an arbitrary commutative Noetherian domain and $\La=R[x]/(x^2)$. Then $R\oplus\La\in\refl\La$ is a generator-cogenerator but not a height $1$ projective.

\subsection{Proof}
The rest of this section is devoted to the proof of \ref{Morita}.
Let us start with some preparatory lemmas. Recall that a ring $A$ is {\it left coherent} (resp. {\it right coherent}) if the category $\proj A$ has weak kernels (resp. weak cokernels). A {\it coherent ring} is a ring which is left and right coherent.
\begin{Lem}\label{funf}
Let $\E$ be an exact category, $M\in\E$, and $\La=\End_\La(M)$. 
\begin{enumerate}
\item\label{contra} If $M$ is a cogenerator and $\La$ is left coherent, then $\add M$ is contravariantly finite.
\item If $M$ is a generator and $\La$ is right coherent, then $\add M$ is covariantly finite.
\end{enumerate}
\end{Lem}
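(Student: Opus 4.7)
The plan is to prove (1) and deduce (2) from it by opposite-category duality: passing from $\E$ to $\E^\op$ exchanges inflations with deflations, generators with cogenerators, and (since $\End_{\E^\op}(M) = \La^\op$) left with right coherence of $\La$, so (1) applied to $\E^\op$ yields (2).

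For (1), I fix $X \in \E$ and use the cogenerator hypothesis twice to build a short coresolution of $X$ by $\add M$: first an admissible monomorphism $X \hookrightarrow M_0$ with $M_0 \in \add M$ and cokernel $Y$, then an admissible monomorphism $Y \hookrightarrow M_1$ with $M_1 \in \add M$. The composite $M_0 \twoheadrightarrow Y \hookrightarrow M_1$ is then a morphism whose kernel in $\E$ is the inflation $X \hookrightarrow M_0$, so applying the left exact functor $\Hom_\E(M, -)$ produces an exact sequence of left $\La$-modules
\[ 0 \to \Hom_\E(M, X) \to \Hom_\E(M, M_0) \to \Hom_\E(M, M_1), \]
whose last two terms are finitely generated projective via the Yoneda equivalence $\Hom_\E(M, -) \colon \add M \xrightarrow{\simeq} \proj \La$.

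The key input from coherence is the standard fact that $\La$ is left coherent if and only if the kernel of any morphism between finitely generated projective left $\La$-modules is itself finitely generated. Applied to the displayed sequence, this gives a surjection $\Hom_\E(M, N) \twoheadrightarrow \Hom_\E(M, X)$ for some $N = M^n \in \add M$; by Yoneda this corresponds to a morphism $h \colon N \to X$ in $\E$ with $\Hom_\E(M, h)$ surjective. To conclude that $h$ is a right $\add M$-approximation, for any $M' \in \add M$ I write $M'$ as a direct summand of some $M^k$ and observe that the composition-with-$h$ map $\Hom_\E(M', N) \to \Hom_\E(M', X)$ is a direct summand of the surjection $\Hom_\E(M, N)^k \to \Hom_\E(M, X)^k$, hence itself surjective.

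No serious obstacle is anticipated: the crux is the transition in the second paragraph, where the cogenerator hypothesis supplies a two-term presentation of the $\La$-module $\Hom_\E(M, X)$ by finitely generated projectives, left coherence converts this into finite generation of $\Hom_\E(M, X)$, and Yoneda translates the resulting presentation into the desired $\add M$-approximation in $\E$.
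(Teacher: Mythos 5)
Your proposal is correct and follows essentially the same route as the paper: use the cogenerator hypothesis twice to embed $X$ into a left exact sequence $0\to\Hom_\E(M,X)\to\Hom_\E(M,M_0)\to\Hom_\E(M,M_1)$ with the last two terms finitely generated projective, then invoke left coherence to get finite generation of $\Hom_\E(M,X)$ and translate the resulting surjection from a projective back through Yoneda into a right $(\add M)$-approximation. The only difference is that you spell out details the paper compresses (the kernel-of-projectives formulation of coherence, the summand argument for arbitrary $M'\in\add M$, and the explicit duality giving (2)), which is harmless.
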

\begin{proof}
	We only prove (1). Let $X\in\E$. Since $M$ is a cogenerator, we have a conflation $X\to M^0\to Y$ and an admissible monomorphism $Y\to M^1$ with $M^0, M^1\in\add M$. Applying $\Hom_\C(M,-)$ we get an exact sequence $0\to \Hom_\E(M,X)\to\Hom_\E(M,M^0)\to\Hom_\E(M,M^1)$ in $\mod\La$. Since $\La=\End_\E(M)$ is left coherent, we see that $\Hom_\C(M,X)$ is finitely presented as a left $\La$-module, and hence $X$ has a right $(\add M)$-approximation.
\end{proof}

\begin{Lem}\label{zensha}
Let $\E$ be an exact category and $M\in\E$ a generator-cogenerator such that $\La=\End_\E(M)$ is coherent.
\begin{enumerate}
\item For each $X\in\E$ there is an admissible epimorphism $M_0\to X$ which is a right $(\add M)$-approximation.
\item For each $X\in\E$ there is an admissible monomorphism $X\to M^0$ which is a left $(\add M)$-approximation.
\end{enumerate}
\end{Lem}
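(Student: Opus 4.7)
The plan is to combine two separate ingredients. From the (co)generator hypothesis one obtains an admissible epimorphism from (resp. monomorphism to) an object of $\add M$, but this map need not be an approximation. From Lemma \ref{funf} together with the coherence of $\La$ one obtains a right (resp. left) $(\add M)$-approximation, but this map need not be admissible. I would take the direct sum of the two maps and verify that it has both properties simultaneously.

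For (1), given $X \in \E$, the generator hypothesis yields an admissible epimorphism $g \colon N \to X$ with $N \in \add M$, and Lemma \ref{funf}(\ref{contra}) yields a right $(\add M)$-approximation $f \colon N' \to X$ with $N' \in \add M$. My candidate is the map $\varphi := (g,\, f) \colon N \oplus N' \to X$. It is a right $(\add M)$-approximation, since any morphism $M_1 \to X$ with $M_1 \in \add M$ factors through $f$ and hence through $\varphi$ via the second coordinate inclusion $N' \hookrightarrow N \oplus N'$. To see that $\varphi$ is an admissible epimorphism, I would use the factorization
\[ N \oplus N' \xrightarrow{\;g \oplus 1_{N'}\;} X \oplus N' \xrightarrow{\;(1_X,\, f)\;} X, \]
where the first arrow is the direct sum of the admissible epimorphism $g$ with the identity (hence itself admissible, by taking direct sums of conflations) and the second is a split epimorphism with splitting $x \mapsto (x, 0)$; closure of deflations under composition (axiom \ref{EX2}) then finishes.

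Part (2) is entirely dual: combine the admissible monomorphism $i \colon X \to N^0$ coming from the cogenerator hypothesis with a left $(\add M)$-approximation $h \colon X \to N^1$ provided by Lemma \ref{funf}(2), and consider the diagonal map $\binom{i}{h} \colon X \to N^0 \oplus N^1$. Its admissibility is verified by the dual factorization
\[ X \xrightarrow{\;\binom{1_X}{h}\;} X \oplus N^1 \xrightarrow{\;i \oplus 1_{N^1}\;} N^0 \oplus N^1, \]
and the approximation property again comes from the second component.

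The argument is essentially routine; the only point requiring verification is the admissibility of the combined map, which is handled by the explicit factorizations above, so I do not anticipate a genuine obstacle.
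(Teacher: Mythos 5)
Your proposal is correct and is essentially the paper's own argument: the paper likewise takes the admissible epimorphism coming from the generator property, the right $(\add M)$-approximation coming from \ref{funf}, and combines them by passing to the direct sum $M_1\oplus M_2\to X$. Your explicit factorization through $X\oplus N'$ just spells out the admissibility of the combined map, which the paper leaves implicit.
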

\begin{proof}
	We only prove (1). Since $M$ is a generator we have an admissible epimorphism $M_1\to X$, and since $M$ is a cogenerator we also have a right $(\add M)$-approximation $M_2\to X$ by \ref{funf}(\ref{contra}). Then $M_1\oplus M_2\to X$ is an admissible epimorphism which is a right $(\add M)$-approximation.
\end{proof}

The following step is crucial.
\begin{Lem}\label{refv}
Let $M\in\E$ be a generator-cogenerator such that $\End_\E(M)$ is coherent. Then for every $X\in\E$ we have the following.
\begin{enumerate}
\item $\Hom_\E(M,X)^\ast\simeq\Hom_\E(X,M)$ in $\mod\La^\op$.
\item $\Hom_\E(X,M)^\ast\simeq\Hom_\E(M,X)$ in $\mod\La$.
\end{enumerate}
Threrfore, there is a commutative diagram
\[ \xymatrix@R=3mm@C=20mm{
	&\mod\La\ar@{<->}[dd]^-{(-)^\ast}\\
	\E\ar[ur]^-{\Hom_\E(M,-)}\ar[dr]_-{\Hom_\E(-,M)}&\\
	&\mod\La^\op.} \]
\end{Lem}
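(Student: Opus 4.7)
The plan is to build a ``projective presentation'' of $X$ by objects of $\add M$ using Lemma \ref{zensha}, and then compare what happens when we apply $\Hom_\E(M,-)$ and $\Hom_\E(-,M)$. The key device is the Yoneda identification $\add M \xsimeq \proj\La$ sending $N \mapsto \Hom_\E(M,N)$, under which $\Hom_\E(N,M)$ corresponds to $\Hom_\La(\Hom_\E(M,N),\La)$ for $N \in \add M$. This lets us recognize both sides of (1) as the kernel of a common map between such projectives, yielding the desired natural isomorphism.

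For (1), I would first apply \ref{zensha}(1) twice to obtain an admissible epimorphism $M_0 \twoheadrightarrow X$ which is a right $(\add M)$-approximation, and then, letting $K := \ker(M_0 \to X)$ (which extends to a conflation $K \to M_0 \to X$ since $\C$ is quasi-abelian), an admissible epimorphism $M_1 \twoheadrightarrow K$ which is also a right approximation. Applying $\Hom_\E(M,-)$, the approximation properties together with left-exactness of $\Hom$ force exactness of
\[
\Hom_\E(M,M_1) \to \Hom_\E(M,M_0) \to \Hom_\E(M,X) \to 0
\]
in $\mod\La$; dualizing and applying the Yoneda identification above yields
\[
0 \to \Hom_\E(M,X)^\ast \to \Hom_\E(M_0,M) \to \Hom_\E(M_1,M).
\]
On the other hand, applying $\Hom_\E(-,M)$ directly to the conflation $K \to M_0 \to X$ gives a left-exact sequence $0 \to \Hom_\E(X,M) \to \Hom_\E(M_0,M) \to \Hom_\E(K,M)$, and the admissible epi $M_1 \twoheadrightarrow K$ then embeds $\Hom_\E(K,M) \hookrightarrow \Hom_\E(M_1,M)$. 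Hence $\Hom_\E(X,M)$ is also the kernel of $\Hom_\E(M_0,M) \to \Hom_\E(M_1,M)$, proving (1).

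For (2), I would run the dual argument using \ref{zensha}(2): build a coresolution $X \to M^0 \to M^1$ with $X \to M^0$ an admissible monomorphism and left $(\add M)$-approximation, and $\coker(X \to M^0) \to M^1$ likewise. Applying $\Hom_\E(-,M)$ produces a presentation of $\Hom_\E(X,M)$ in $\mod\La^\op$; dualizing and comparing with the sequence obtained by applying $\Hom_\E(M,-)$ gives $\Hom_\E(X,M)^\ast \simeq \Hom_\E(M,X)$. The commutative diagram then results from the naturality of these isomorphisms in $X$, obtained by lifting any morphism $X \to X'$ to a morphism of the corresponding (co)resolutions via the approximation property.

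The main obstacle I foresee is the bookkeeping around the fact that the chosen (co)resolution is not canonical: one must verify that different choices induce the same identification of the two kernels, and that morphisms lift compatibly so as to give naturality. This is a standard homological argument but it relies essentially on the approximation property --- not merely on $M$ being a generator/cogenerator --- which is exactly what Lemma \ref{zensha} supplies under the coherence hypothesis on $\La$.
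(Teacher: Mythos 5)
Your proposal is correct and follows essentially the same route as the paper's proof: build an approximation-based presentation $M_1\to M_0\to X\to 0$ via \ref{zensha}, apply $\Hom_\E(M,-)$, dualize with $(-)^\ast$, and compare with the sequence obtained from $\Hom_\E(-,M)$ using the identification $\Hom_\E(N,M)\simeq\Hom_\E(M,N)^\ast$ for $N\in\add M$. One minor remark: you do not need quasi-abelianness to form $K=\ker(M_0\to X)$, since the kernel of a deflation exists in any exact category, so the argument applies to general $\E$ exactly as the lemma is stated.
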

\begin{proof}
	We only prove (1). Since $M$ is a generator we have an admissible epimorphism $M_0\to X$, which we may assume to be a right $(\add M)$-approximation by \ref{zensha} since $M$ is a cogenerator as well. Taking the similar morphism for the kernel, we get a presentation
	\[ \xymatrix{ M_1\ar[r]& M_0\ar[r]&X\ar[r]& 0} \]
	which induces an exact sequence $\Hom_\E(M,M_1)\to\Hom_\E(M,M_0)\to\Hom_\E(M,X)\to0$ in $\mod\La$. Applying $(-)^\ast=\Hom_\La(-,\La)$ to this and comparing it with the one obtained by applying $\Hom_\E(-,M)$ to the above presentation, we get a commutative diagram
	\[ \xymatrix{
		0\ar[r]&\Hom_\E(M,X)^\ast\ar[r]\ar[d]&\Hom_\E(M,M_0)^\ast\ar[r]\ar[d]^-\rsimeq&\Hom_\E(M,M_1)\ar[d]^-\rsimeq\\
		0\ar[r]&\Hom_\E(X,M)\ar[r]&\Hom_\E(M_0,M)\ar[r]&\Hom_\E(M_1,M) } \]
	in $\mod\La^\op$ whose right two vertical maps are isomorphism. We conclude that $\Hom_\E(M,X)^\ast\simeq\Hom_\E(X,M)$, as desired.
\end{proof}
	
Now we are ready to prove the main result \ref{Morita} of this section.
\begin{proof}[Proof of \ref{Morita}]
	Let $\C$ be a quasi-abelian category and $M\in\C$ be a generator-cogenerator such that $\End_\C(M)=\La$ is Noetherian.
	
	We first show that the functor $F=\Hom_\C(M,-)\colon\C\to\Mod\La$ is fully faithful.
	This is the standard ``generator corrspondence''. We have to prove that
	\[ \xymatrix{ F_{X,Y}\colon\Hom_\C(X,Y)\ar[r]& \Hom_\La(FX,FY) } \]
	is an isomorphism for every $X,Y\in\C$. This is clearly the case for $X\in\add M$. By \ref{zensha}, we may take an admissible epimorphism $M_0\to X$ which is a right $(\add M)$-approximation. Taking the similar morphism for the kernel, we get a presentation $M_1\to M_0\to X\to 0$ which yields an exact sequence $FM_1\to FM_0\to FM_0\to 0$. Applying $\Hom_\La(-,FY)$ and comparing with the one obtained by applying $\Hom_\C(-,Y)$ to the presentation, we get a commutative diagram
	\[ \xymatrix{
		0\ar[r]&\Hom_\C(X,Y)\ar[r]\ar[d]_-{F_{X,Y}}&\Hom_\C(M_0,Y)\ar[r]\ar[d]_-{F_{M_0,Y}}^-\rsimeq&\Hom_\C(M_1,Y)\ar[d]_-{F_{M_1,Y}}^-\rsimeq\\
		0\ar[r]&\Hom_\La(FX,FY)\ar[r]&\Hom_\La(FM_0,FY)\ar[r]&\Hom_\La(FM_1,FY), } \]
	in which the right two vertical maps are isomorphisms. We conclude by five lemma that so is the remaining one $F_{X,Y}$. This proves that $F$ is fully faithful.
	
	We know by \ref{funf} that the above functor values in $\mod\La$, and moreover in $\refl\La$ by \ref{refv}. It remains to show that $\Hom_\C(M,-)\colon\C\to\refl\La$ is essentially surjective. Let $L\in\refl\La$ and pick an exact sequence $0\to L\to P^0\xrightarrow{f} P^1$ with $P^0, P^1\in\proj\La$. Let $M^0\xrightarrow{g} M^1$ be the morphism in $\add M\subset \C$ corresponding to $f$ under the equivalence $\Hom_\C(M,-)\colon\add M\xsimeq\proj\La$. Since $\C$ has kernels, we may set $X=\Ker g$ in $\C$, which then shows $L=\simeq\Hom_\C(M,X)$.
	
	We have proved that there is an equivalence $\C\simeq\refl\La$. We conclude by \ref{max} that $\La$ satisfies the two-sided $(2,2)$-condition.
\end{proof}
 
\begin{Rem}
We made no essential use of the condition $\C$ is quasi-abelian in the course of the above proof. In fact, for an arbitrary exact category $\E$ and a generator-cogenertor $M\in\E$ with coherent endomorphism ring $\La$, we have a fully faithful functor
\[ \xymatrix{ \Hom_\E(M,-)\colon\E\ar[r]&\refl\La } \]
which can be seen to be an equivalence, for example, when $\E$ has kernels.
However, this is not an optimal result, for example in the following points.
\begin{itemize}
\item We would like the equivalence to preserve the exact structures, but there is no known natural exact structure on $\refl\La$ for general $\La$.
\item Since the category $\refl\La$ does not necessarily have kernels (see \ref{EX-ker}), it is not natural to assume that $\E$ has kernels.
\end{itemize}
It would therefore be an interesting task to formulate a categorical structure (possibly weaker than an exact structure or perhaps just additive categories) and characterize the category of reflexive modules among them. 
\end{Rem}

\thebibliography{99}
\bibitem{AS} M. Artin and W. Schelter, {Graded algebras of global dimension 3}, Adv. Math. 66 (1987), no. 2, 171-216.
\bibitem{ATV} M. Artin, J. Tate, and M. Van den Bergh, {Some algebras associated to automorphisms of elliptic curves}, in: The Grothendieck Festschrift, Vol. I, 33-85, Progr. Math. 86, Birkhäuser Boston, Boston, MA, 1990.
\bibitem{ABr} M. Auslander and M. Bridger, {Stable module theory}, Mem. Amer. Math. Soc. 94, American Mathematical Society, Province, RI, 1969.
\bibitem{Au71} M. Auslander, {Representation dimension of Artin algebras}, Queen Mary College mathematics notes, London, 1971.
\bibitem{AR94} M. Auslander and I. Reiten, {$k$-Gorenstein algebras and syzygy modules}, J. Pure Appl. Algebra 92 (1994) 1-27.
\bibitem{AR96} M. Auslander and I. Reiten, {Syzygy modules over Noetherian rings}, J. Algebra 183 (1996) 167-185.
\bibitem{BB} A. Bodzenta and A. Bondal, {Reconstruction of a surface from the category of reflexive sheaves}, Adv. Math. 434 (2023), Paper No. 109338, 38 pp.
\bibitem{Bu} T. B\"uhler, {Exact categories}, Expo. Math. 28 (2010), no. 1, 1-69.
\bibitem{E} H. Enomoto, {Classification of exact structures and Cohen-Macaulay-finite algebras}, Adv. Math. 335 (2018) 838-877.
\bibitem{FGR} R. M. Fossum, P. A. Griffith, and I. Reiten, {Trivial extensions of abelian categories}, Lecture Notes in Mathematics, Vol. 456. Springer-Verlag, Berlin-New York, 1975. xi+122 pp.
\bibitem{Ga} P. Gabriel, {Des catégories abéliennes}, Bull. Soc. Math. France, 90 (1962), pp. 323-448.
\bibitem{Gi} V. Ginzburg, {Calabi-Yau algebras}, arXiv:0612139.
\bibitem{Ho} M. Hoshino, {On dominant dimension of Noetherian rings}, Osaka J. Math. 26 (1989), no. 2, 275–280.
\bibitem{Isch} F. Ischebeck, {Eine Dualität zwischen den Funktoren Ext und Tor}, (German) J. Algebra 11 (1969), 510–531.
\bibitem{IwS} Y. Iwanaga and H. Sato, {Minimal injective resolutions of Gorenstein rings}, Comm. Algebra 18 (1990), no. 11, 3835–3856.
\bibitem{Iy03} O. Iyama, {Symmetry and duality on $n$-Gorenstein rings}, J. Algebra 269 (2003), no. 2, 528--535.
\bibitem{Iy05c} O. Iyama, {$\tau$-categories III: Auslander orders and Auslander-Reiten quivers}, Algebr. Represent. Theory 8 (2005), no. 5, 601--619.
\bibitem{Iy05} O. Iyama, {The relationship between homological properties and representation theoretic realization of artin algebras}, Trans. Amer. Math. Soc. 357 (2005) no.2, 709-734.
\bibitem{Iy07a} O. Iyama, {Higher-dimensional Auslander-Reiten theory on maximal orthogonal subcategories}, Adv. Math. 210 (2007) 22-50.
\bibitem{Iy07b} O. Iyama, {Auslander correspondence}, Adv. Math. 210 (2007) 51-82.
\bibitem{IR} O. Iyama and I. Reiten, {Fomin-Zelevinsky mutation and tilting modules over Calabi-Yau algebras}, Amer. J. Math. 130 (2008), no. 4, 1087-1149.
\bibitem{IW14} O. Iyama and M. Wemyss, {Maximal modifications and Auslander-Reiten duality for non-isolated singularities}, Invent. Math. 197 (2014), no. 3, 521-586.
\bibitem{Ke11} B. Keller, {Deformed Calabi-Yau completions}, with an appendix by M. Van den Bergh, J. Reine Angew. Math. 654 (2011) 125-180.
\bibitem{Kr24} H. Krause, {On Matlis reflexive modules}, arXiv:2404.16711.
\bibitem{RF} I. Reiten, and R. Fossum, {Commutative $n$-Gorenstein rings}, Math. Scand. 31 (1972), 33--48.
\bibitem{Tac} H. Tachikawa, {Quasi-Frobenius rings and generalizations. ${\rm QF}$-$3$ and ${\rm QF}$-$1$ rings}, Lecture Notes in Mathematics, Vol. 351. Springer-Verlag, Berlin-New York, 1973. xi+172 pp.
\bibitem{VdB04} M. Van den Bergh, {Non-commutative crepant resolutions}, The legacy of Niels Henrik Abel, 749–770, Springer, Berlin, 2004.
\end{document}